\def\a        {{\boldsymbol a}}
\def\b        {{\boldsymbol b}}
\def\e        {{\boldsymbol e}}
\def\x        {{\boldsymbol x}}
\def\y        {{\boldsymbol y}}
\def\n        {{\boldsymbol n}}
\def\dx     {{\rm d}{\boldsymbol x}}
\def\dt     {{\rm d}t}
\def\R {{\mathds R}}
\newtheorem{theorem}{Theorem}[section]
\newtheorem{lemma}[theorem]{Lemma}
\newtheorem{proposition}[theorem]{Proposition}
\newtheorem{corollary}[theorem]{Corollary}
\newtheorem{definition}[theorem]{Definition}
\begin{document}

\title[Numerical solution for a degenerate aggregation equation]{Numerical solution for an aggregation equation with degenerate diffusion}

\author[R. C. Cabrales]{Roberto Carlos Cabrales$^\mathsection$}
\address{$\mathsection$ Instituto de Investigaci\'on Multidisciplinaria 
en Ciencia y Tecnolog\'ia, Universidad de la Serena, 
Benavente 980, La Serena, Chile. E-mail: {\tt rcabrales@userena.cl}}
\thanks{RCC was partially supported by project PR18511 from Direcci\'on de 
Investigaci\'on of Universidad de La Serena}
\author[J. V. Guti\'errez-Santacreu]{Juan Vicente Guti\'errez-Santacreu$^\dag$}
\address{$\dag$ Dpto. de Matemática Aplicada I, E. T. S. I. Informática, Universidad de Sevilla. Avda. Reina Mercedes, s/n. E-41012 Sevilla, Spain.  E-mail: {\tt juanvi@us.es}. }
\thanks{JVGS and JRRG were partially supported by the Spanish Grant No. PGC2018-098308-B-I00 from Ministerio de Ciencias, Innovación y Universidades with the participation of FEDER}

\author[J. R. Rodríguez-Galván]{José Rafael Rodríguez-Galván$^\ddag$}
\address{$\ddag$ Departamento de Matemáticas. Facultad de Ciencias. Campus Universitario de Puerto Real, Universidad de Cádiz. 11510 Puerto Real, Cádiz E-mail: {\tt rafael.rodriguez@uca.es}}
\date{\today}

\begin{abstract} A numerical method for approximating weak solutions of an aggregation equation with degenerate diffusion is introduced. The numerical method consists of a stabilized finite element method together with a mass lumping technique and an extra stabilizing term plus a semi--implicit Euler time integration. Then we carry out a rigorous passage to the limit as the spatial and temporal discretization parameters tend to zero, and show that the sequence of finite element approximations converges toward the unique weak solution of the model at hands. In doing so, nonnegativity is attained due to the stabilizing term and the acuteness on partitions of the computational domain, and hence a priori energy estimates of finite element approximations are established. As we deal with a nonlinear problem, some form of strong convergence is required. The key compactness result is obtained via an adaptation of a Riesz--Fréchet--Kolmogorov criterion by perturbation. A numerical example is also presented.

\end{abstract}

\maketitle

{\bf 2010 Mathematics Subject Classification.} 65M60, 35K55, 45K05, 35K20.

{\bf Keywords.} Finite-element approximation; Aggregation equation; Nonlinear diffusion.

\tableofcontents

\section{Introduction}
\subsection{The model}
 Let $\Omega\subset \R^d$, $d=2$ or $3$, be a bounded domain and $T>0$ be a fixed time. We consider an aggregation equation with degenerate diffusion term which reads as follows. Find $\rho: \bar\Omega\times[0,T]\to [0,\infty)$ such that
\begin{equation}\label{PDE}
\partial_t \rho-\Delta A(\rho)+\nabla\cdot(\rho\nabla K*\rho)=0\quad\mbox{ in }\quad Q:=\Omega\times (0,T],
\end{equation}
subject to the boundary condition
\begin{equation}\label{BC}
(-\nabla A(\rho) + \rho\nabla K * \rho) \cdot \n = 0\quad\mbox{ on }\quad \Sigma:=\partial\Omega\times(0,T]
\end{equation}
and the initial condition
\begin{equation}\label{IC}
\rho(0)=\rho^0\quad\mbox{ in }\quad \Omega,
\end{equation}
where $*$ stands for the convolution operator and $\n$ is the outward-pointing unit vector to $\partial\Omega$.

Equation \eqref{PDE} arises in many models in biology, where $\rho$ represents the population density,  $K * \rho$ stands for the density of the chemo-attractant, and $A(\rho)$ models the local repulsion. Patlak--Keller--Segel models \cite{Patlak_1953, Keller_Segel_1971, Horstmann_1970_I, Horstmann_1970_II, Hillen_Painter_2009, Boi_Cappaso_Morale_2000,Milewski_Yang_2008, Gurtin_McCamy_1977} governing the movement of species by chemotaxis are a particular instance, which correspond to considering $A(\rho)=\rho^m$ and  $K(\x)=-\frac{1}{2\pi} \log |\x|$ for $d=2$ or $K(\x)=\frac{1}{3 \omega_d d(2-d)}\frac{1}{|\x|^{d-2}}$ for $d\ge 3$, with $\omega_d$ being the volume of the unit ball in $\R^d$.  Pure aggregation equations modeling biological swarming \cite{Mogilner_Edelstein-Keshet_Bent_Spiros_2003, Mogilner_Edelstein-Keshet_1999, Topaz_Bertozzi_Lewis_2004, Topaz_Bertozzi_Lewis_2006} result from ruling  out the diffusion term $-\Delta A(\rho)$ and from selecting $K(\cdot)$ to be the Newtonian potential, repulsive-attractive Morse potential, or power law potential.


While there is a rich body of literature on the mathematical analysis of equation \eqref{PDE} supported by numerical simulations, very few results on numerical analysis are available for the situation considered here. Carrillo, Chertock, and Huang \cite{Carrillo_Chertock_Huang_2015} introduced a positivity-preserving entropy-decreasing finite volume scheme for \eqref{PDE} which takes into account a confinement potential term as well.

The existence and uniqueness of a weak solution to equation \eqref{PDE} was established by Bertozzi and Slep\v{c}ev \cite{Bertozzi_Slepcev_2010} for $A(\rho)$ being degenerate and $K(\cdot)$ satisfying  some regularity assumptions.  It is this degeneracy of $A$ that is the major source of difficulties in studying equation \eqref{PDE}. The existence proof consists of three steps: (a) introducing a regularized problem via the diffusion term $A(\rho)$, (b) establishing a maximum principle and a priori energy bounds independent of the regularizing parameter, and (c) proving compactness for the regularized problem. In particular, the compactness of the regularized solutions is obtained by using some results borrowed from \cite{Alt_Luckhaus_1983} based on the Riesz--Fréchet--Kolmogorov criterion on Lebesgue spaces.

Our aim in this work is to construct a sequence of fully discrete approximations and analyze its convergence toward the unique solution to \eqref{PDE}-\eqref{IC}. Our algorithm uses a stabilized finite element method combined with a mass lumping technique plus a semi--implicit Euler time integration. This resulting scheme is conditionally solvable and mass conserving, and preserves nonnegativity under acute partitions of the computational domain. A priori energy bounds are obtained in a different way from those in \cite{Bertozzi_Slepcev_2010} since a discrete maximum principle does not hold. The lack of such a discrete maximum principle is overcame with the use of a nodal truncating operator \cite{GG_GS_2009}. A version of the Riesz--Fréchet--Kolmogorov compactness criterion on Lebesgue spaces by perturbation \cite{Arzela_GG} allows the passage to the limit in the nonlinear terms as the spatial and temporal discretization parameters tend to zero in order to reach the unique weak solution of \eqref{PDE}-\eqref{IC}.

%
%
%

%
%
%
%


\subsection{Notation}
For $p\in[1,\infty]$, we denote by $L^p(\Omega)$ the usual Lebesgue space, i.e.,
$$
L^p(\Omega) = \{v : \Omega \to \R\, :\, v \mbox{ Lebesgue-measurable}, \int_\Omega |v(\x)|^p d\x<\infty \}.
$$
or
$$
L^\infty(\Omega) = \{v : \Omega \to \R\, :\, v \mbox{ Lebesgue-measurable}, {\rm ess}\sup_{\x\in \Omega} |v(\x)|<\infty \}.
$$

This space is a Banach space endowed with the norm
$\|v\|_{L^p(\Omega)}=(\int_{\Omega}|v(\x)|^p\,{\rm d}\x)^{1/p}$ if $p\in[1, \infty)$ or $\|v\|_{L^\infty(\Omega)}={\rm ess}\sup_{\x\in \Omega}|v(\x)|$ if $p=\infty$. In particular,  $L^2(\Omega)$ is a Hilbert space.  We shall use
$\left(u,v\right)=\int_{\Omega}u(\x)v(\x){\rm d}\x$ for its inner product and $\|\cdot\|$ for its norm.

Let $\alpha = (\alpha_1, \alpha_2, . . . , \alpha_d)\in \mathds{N}^d$ be a
multi-index with $|\alpha|=\alpha_1+\alpha_2+...+\alpha_d$, and let
$\partial^\alpha$ be the differential operator such that
$$\partial^\alpha=
\Big(\frac{\partial}{\partial{x_1}}\Big)^{\alpha_1}...\Big(\frac{\partial}{\partial{x_d}}\Big)^{\alpha_d}.$$

For $m\ge 0$ and $p\in[1, \infty)$, we define $W^{m,p}(\Omega)$ to be the Sobolev space of all functions whose $m$ derivatives are in $L^p(\Omega)$, i.e.,
$$
W^{m,p}(\Omega) = \{v \in L^p(\Omega)\,:\, \partial^k v \in L^2(\Omega)\ \forall ~ |k|\le m\}
$$ associated to the norm
\begin{align*}
\|f\|_{W^{m,p}(\Omega)}&=\left(\sum_{|\alpha|\le m} \|\partial^\alpha f\|^p_{L^p(\Omega)}\right)^{1/p} \quad  &&\hbox{for} \ 1 \leq p < \infty, \\
\|f\|_{W^{m,p}(\Omega)}&=\max_{|\alpha|\le m} \|\partial^\alpha f\|_{L^\infty(\Omega)}, \quad  && \hbox{for} \  p = \infty.
\end{align*}
For $p=2$, we denote $W^{m,2}(\Omega)=H^m(\Omega)$ and its dual as $(H^m(\Omega))'$. The dual pairing between $H^1(\Omega)$ and $(H^1(\Omega))'$ is denoted by $<\cdot, \cdot>$.

Let $X$ be a Banach space. Thus, $L^p(0,T;X)$ denotes the space of Bochner-measurable, $X$-valued functions on $(0, T)$ such that $\int_0^T\|f(s)\|^p_{X} {\rm d} s<\infty$ for $p\in [1,\infty)$ or  ${\rm ess}\sup_{s\in(0,T)}\|f(s)\|_X<\infty$ for
$p=\infty$.

\subsection{Outline of the paper } The layout of the paper is as follows. In section 2 we introduce the hypotheses for constructing the finite element approximation of \eqref{PDE} as  well as some auxiliary results. In section 3 we present our finite element method which includes a stabilizing term and combines a semi-implicit time integration. Afterwards we state our main theorem which is proved in the subsequent sections. The well-posedness of our algorithm is carried out in section 4. Non-negativity under the acuteness of the mesh and a priori energy estimates are obtained in section 5.  Section 6 deals with the compactness of the finite element approximations. The passage to the limit toward the unique weak solution of \eqref{PDE} is reported in section 7. To finish off, we present a numerical example in section 8.
\section{The discrete setting}
This section is mainly devoted to the numerical tools for approximating the solution to problem \eqref{PDE}-\eqref{IC}.
\subsection{Hypotheses} Herein we set out the hypotheses that will be required for the domain, the mesh, and the finite element space.
\begin{enumerate}
\item [(H1)] Let $\Omega$ be a convex, bounded domain of $\R^d$ with polygonal ($d=2$) or polyhedral ($d=3$) Lipschitz-continuous boundary.
\item[(H2)] Let $\{{\mathcal E}_{h}\}_{h>0}$  be a family of simplicial partitions of  $\overline{\Omega}$ that is acute, shape-regular, and quasi-uniform, so that $\overline \Omega=\cup_{E\in {\mathcal E}_h}E$, where $h=\max_{E\in \mathcal{E}_h} h_E$, with $h_E$ being the diameter of $E$. More precisely, we assume that
\begin{enumerate}
\item[(a)]  there exists $\alpha>0$, independent of $h$, such that
$$
\min\{  {\rm diam}\,  B_E\, : \, E\in\mathcal{E}_h \}\ge \alpha h,
$$
where $B_E$ is the largest ball contained in $E$, and
\item[(b)] there exists $\beta > 0$ such that  every angle between two edges (or faces)  of a triangle (or a tetrahedron) is bounded by $\frac{\pi}{2} - \beta$.
\end{enumerate}
Further, let ${\mathcal N}_h = \{\a_i\}_{i\in I}$ denote the set of all the nodes of ${\mathcal E}_h$.
\item [(H3)] A conforming finite element space associated with
${\mathcal E}_h$ is assumed for approximating $H^1(\Omega)$. Let  $\mathcal{P}_1(E)$ be the set of linear polynomials on  $E$; the space of continuous, piecewise polynomial
functions on ${\mathcal E}_h$  is then denoted as
$$
D_h = \left\{ \bar\rho_h \in {C}^0(\overline\Omega) \;:\;
\bar\rho_h|_E \in \mathcal{P}_1(E), \  \forall E \in \mathcal{E}_h \right\},
$$
whose shape functions are $\{\varphi_\a\}_{\a\in{\mathcal{N}_h}}$.
\end{enumerate}

\subsection{Technical preliminaries} Under hypotheses $\rm(H1)$--$\rm(H3)$ we collect some properties that will be used in the subsequent analysis.

To start with, we state a consequence of the acuteness of the mesh needed for proving non-negativity of the finite element approximation.
\begin{proposition} Let $E\in\mathcal{E}_h$ with vertices $\{\boldsymbol{a}_0,\cdots\boldsymbol{a}_d\}$. Then there exists a constant $C_{\rm neg}>0$, depending on $\beta$, but otherwise independent of $h$ and $E$, such that
\begin{equation}\label{off-diagonal}
\int_E \nabla\varphi_{\a_i}\cdot\nabla\varphi_{\a_j}\dx  \le - C_{\rm neg} h^{d-2}
\end{equation} for all $\a_i,\a_j\in E$ with $i\not=j$, and
\begin{equation}\label{diagonal}
\int_E\nabla\varphi_{\a_i}\cdot\nabla\varphi_{\a_i}\dx\ge C_{\rm neg} h^{d-2}
\end{equation}
for all $\a_i\in E$.
\end{proposition}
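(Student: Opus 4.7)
The plan is to exploit the explicit form of the $P_1$ shape functions on each simplex. Fix $E\in\mathcal{E}_h$ with vertices $\boldsymbol{a}_0,\dots,\boldsymbol{a}_d$, and for each vertex $\boldsymbol{a}_i$ let $F_i$ denote the opposite face and $\boldsymbol{n}_i$ its outward unit normal in $E$. Since $\varphi_{\boldsymbol{a}_i}|_E$ is the affine function that vanishes on $F_i$ and equals $1$ at $\boldsymbol{a}_i$, and since the distance from $\boldsymbol{a}_i$ to $F_i$ is exactly $d|E|/|F_i|$, I would start from the identity
\begin{equation*}
\nabla\varphi_{\boldsymbol{a}_i}\big|_E = -\frac{|F_i|}{d\,|E|}\,\boldsymbol{n}_i,
\end{equation*}
which is constant on $E$. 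Integrating the dot product therefore reduces to a pure geometric computation:
\begin{equation*}
\int_E \nabla\varphi_{\boldsymbol{a}_i}\cdot\nabla\varphi_{\boldsymbol{a}_j}\,\dx \,=\, \frac{|F_i|\,|F_j|}{d^{2}\,|E|}\,\boldsymbol{n}_i\cdot\boldsymbol{n}_j .
\end{equation*}
The case $i=j$ is then immediate since $\boldsymbol{n}_i\cdot\boldsymbol{n}_i=1$.

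For the off-diagonal case $i\neq j$, I would use the elementary fact that the angle between the outward normals $\boldsymbol{n}_i$ and $\boldsymbol{n}_j$ is supplementary to the interior angle $\theta_{ij}$ that the faces $F_i$ and $F_j$ form in $E$ (a planar angle at the common vertex when $d=2$, a dihedral angle along the common edge when $d=3$). Thus $\boldsymbol{n}_i\cdot\boldsymbol{n}_j=-\cos\theta_{ij}$, and the acuteness hypothesis (H2)(b) yields $\theta_{ij}\le \pi/2-\beta$, so $\boldsymbol{n}_i\cdot\boldsymbol{n}_j\le -\sin\beta<0$. This already produces a strictly negative upper bound for the off-diagonal term and a strictly positive lower bound for the diagonal term.

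To turn these into the desired $h^{d-2}$ scaling, I would invoke the quasi-uniform shape-regularity condition (H2)(a). It gives $|E|\simeq h^d$ from the inscribed-ball bound, and $|F_i|\simeq h^{d-1}$ for every face: the upper bound comes from $F_i$ being contained in a $(d-1)$-ball of radius $h_E\le h$, and the lower bound from $|F_i|=d|E|/\mathrm{dist}(\boldsymbol{a}_i,F_i)\ge d|E|/h_E$ together with $|E|\gtrsim h^d$. Combining these,
\begin{equation*}
\frac{|F_i|\,|F_j|}{d^{2}|E|}\;\simeq\; h^{d-2},\qquad \frac{|F_i|^{2}}{d^{2}|E|}\;\simeq\; h^{d-2},
\end{equation*}
and absorbing the constants into a single $C_{\mathrm{neg}}>0$ depending only on $\alpha$, $\beta$, and $d$ yields both \eqref{off-diagonal} and \eqref{diagonal}.

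The main technical point is not any single estimate but the careful geometric bookkeeping in step two: correctly identifying the sign in the formula for $\nabla\varphi_{\boldsymbol{a}_i}$, and recognising that the angle between the outward normals is the supplement of the interior angle/dihedral angle (so that acuteness of $E$ forces $\boldsymbol{n}_i\cdot\boldsymbol{n}_j$ to be negative rather than positive). Once this sign structure is in place, the rest is a straightforward application of the non-degeneracy hypotheses on the mesh.
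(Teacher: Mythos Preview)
Your proof is correct and follows essentially the same approach as the paper: both use the explicit formula for $\nabla\varphi_{\boldsymbol{a}_i}|_E$ in terms of the outward normal to the opposite face, identify $\boldsymbol{n}_i\cdot\boldsymbol{n}_j=-\cos\theta_{ij}$ via the supplementary-angle relation, and then convert to the $h^{d-2}$ scaling using (H2)(a)--(b). The only cosmetic difference is that the paper parametrizes the gradient via the height $h_{F_{\boldsymbol a_i}}$ rather than the ratio $|F_i|/(d|E|)$, and bounds $|E|$ below directly by the inscribed-ball volume; since $h_{F_{\boldsymbol a_i}}=d|E|/|F_i|$, the two computations are algebraically identical.
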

\begin{proof} For every $d$-simplex $E\in \mathcal{E}_h$ and for every vertex $\boldsymbol{a}_i\in E$, we denote by $F_{\boldsymbol{a}_i}$ the opposite face  to $\boldsymbol{a}_i$ and by $\boldsymbol{n}_{\boldsymbol{a}_i}$ the exterior (to the $d$-simplex $E$) unit normal vector to the face $F_{\boldsymbol{a}_i}$. Write
$$
\nabla\varphi_{\a_i}|_{E}=-\frac{1}{h_{F_{\a_i}}} \n_{\a_i},
$$
where $h_{F_{\a_i}}$ is the distance of $\a_i$ to the hyperplane which contains $F_{\a_i}$. Then we have
$$
\nabla\varphi_{\a_i}|_{E}\cdot\nabla\varphi_{\a_j}|_{E}=\frac{1}{h_{F_{\a_i}}}\frac{1}{h_{F_{\a_j}}}\n_{\a_i}\cdot\n_{\a_j}.
$$
Note that $\n_{\a_i}\cdot\n_{\a_j} =\cos(\widehat{\n_{\a_i}\,\n_{\a_j}})=\cos(\widehat{F_{\a_i}\,F_{\a_j}}-\pi)=-\cos(\widehat{F_{\a_i}\,F_{\a_j}})$.
Integrating over $E$ gives
$$
\begin{array}{rcl}
\displaystyle
\int_E\nabla\varphi_{\a_i}\cdot\nabla\varphi_{\a_j}\dx&=&\displaystyle
-  |E|\frac{1}{h_{F_{\a_i}}}\frac{1}{h_{F_{\a_j}}} \cos(\widehat{F_{\a_i}F_{\a_j}})
\\
&\le& \displaystyle
-  |B_E|\frac{1}{h_{F_{\a_i}}}\frac{1}{h_{F_{\a_j}}} \cos(\frac{\pi}{2}-\beta)
\\
&=&\displaystyle
- \frac{\pi^{\frac{d}{2}}}{2^d\Gamma(\frac{d}{2}+1)} {({\rm diam}\,B_E)^d}\frac{1}{h_{F_{\a_i}}}\frac{1}{h_{F_{\a_j}}} \cos(\frac{\pi}{2}-\beta)
\\
&\le&\displaystyle
- \alpha^d \frac{\pi^{\frac{d}{2}}}{2^d\Gamma(\frac{d}{2}+1)} {h^d}\frac{1}{h_{F_{\a_i}}}\frac{1}{h_{F_{\a_j}}}\cos(\frac{\pi}{2}-\beta)
\\
&\le&\displaystyle
- \alpha^d \frac{\pi^{\frac{d}{2}}}{2^d\Gamma(\frac{d}{2}+1)} \cos(\frac{\pi}{2}-\beta) {h^{d-2}},
\end{array}
$$
where we have used that the fact that $|B_E|=\frac{\pi^{\frac{d}{2}}}{2^d\Gamma(\frac{d}{2}+1)} {({\rm diam}\,B_E)^d}$ with $\Gamma(\cdot)$ being  Euler's gamma function.

The same argument as in the proof of \eqref{off-diagonal} yields \eqref{diagonal}.
\end{proof}
Some inverse inequalities are provided in the following proposition.
\begin{proposition} Let $E\in\mathcal{E}_h$. There exists a constant $C_{\rm inv}>0$, independent of $h$ and $E$, such that, for all $\bar\rho_h\in  \mathcal{P}_1(E)$,
\begin{equation}\label{inv:H1-L2}
\|\bar \rho_h\|_{H^1(E)}\le \frac{C_{\rm inv}}{h} \|\bar\rho_h\|_{L^2(E)}
\end{equation}
and
\begin{equation}\label{inv:H-1-L2}
 \|\bar\rho_h\|_{L^2(E)} \le \frac{C_{\rm inv}}{h}\|\bar \rho_h\|_{(H^1(E))'}.
\end{equation}
\end{proposition}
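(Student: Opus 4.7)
The plan is to prove both estimates by the standard scaling-to-the-reference-element technique. Let $\hat E$ be a fixed reference $d$-simplex and $F_E:\hat E \to E$ the affine bijection with linear part $B_E$. Shape-regularity and quasi-uniformity assumed in (H2) imply that $|\det B_E|\sim h^d$, $\|B_E\|\lesssim h$, and $\|B_E^{-1}\|\lesssim h^{-1}$ uniformly in $E$. Moreover, since $\mathcal{P}_1(\hat E)$ is finite-dimensional, all norms on it are equivalent; this is the one non-computational ingredient that makes the reference-element reduction work.

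To obtain \eqref{inv:H1-L2} I would set $\hat\rho = \bar\rho_h \circ F_E \in \mathcal{P}_1(\hat E)$. A change of variables yields
\[
\|\bar\rho_h\|_{L^2(E)}^2 = |\det B_E|\,\|\hat\rho\|_{L^2(\hat E)}^2, \qquad \|\nabla\bar\rho_h\|_{L^2(E)}^2 \le \|B_E^{-1}\|^2\,|\det B_E|\,\|\nabla\hat\rho\|_{L^2(\hat E)}^2,
\]
and the norm equivalence $\|\nabla\hat\rho\|_{L^2(\hat E)}\le \hat C\,\|\hat\rho\|_{L^2(\hat E)}$ on $\mathcal{P}_1(\hat E)$ combined with the size estimates for $B_E$ produces $\|\nabla\bar\rho_h\|_{L^2(E)}\le (C/h)\|\bar\rho_h\|_{L^2(E)}$. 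Adding $\|\bar\rho_h\|_{L^2(E)}^2$ to both sides and absorbing the lower-order term (using that $h$ is bounded above uniformly by quasi-uniformity) yields \eqref{inv:H1-L2}.

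For \eqref{inv:H-1-L2} I would argue by duality, reusing \eqref{inv:H1-L2}. Identifying $\bar\rho_h\in L^2(E)\hookrightarrow (H^1(E))'$ through the $L^2$-pairing and using that $\bar\rho_h$ itself is an admissible $H^1(E)$-test function, the definition of the dual norm gives
\[
\|\bar\rho_h\|_{(H^1(E))'}\ge \frac{\int_E |\bar\rho_h|^2\,\dx}{\|\bar\rho_h\|_{H^1(E)}}=\frac{\|\bar\rho_h\|_{L^2(E)}^2}{\|\bar\rho_h\|_{H^1(E)}},
\]
and inserting \eqref{inv:H1-L2} in the denominator delivers exactly \eqref{inv:H-1-L2}. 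There is no genuine obstacle here beyond bookkeeping: one simply needs the dual pairing of $(H^1(E))'$ to agree with the $L^2$-pairing on $L^2(E)$, which is the convention used throughout the paper.
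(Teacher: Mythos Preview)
Your proof is correct. For \eqref{inv:H1-L2} you sketch the standard reference-element scaling argument, which is precisely what is contained in the references the paper cites (\cite{Brenner_Scott_2008}, \cite{Ern_Guermond_2004}); there is no substantive difference here.

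For \eqref{inv:H-1-L2} your route is genuinely different from the paper's, and in fact more direct. The paper introduces the $L^2(E)$-orthogonal projection $\pi_h$ onto $\mathcal{P}_1(E)$ and threads through the chain
\[
\|\bar\rho_h\|_{L^2(E)}=\sup_{\rho}\frac{(\bar\rho_h,\rho)}{\|\rho\|_{L^2(E)}}\le\sup_{\rho}\frac{(\bar\rho_h,\pi_h\rho)}{\|\pi_h\rho\|_{L^2(E)}}\le\frac{C_{\rm inv}}{h}\sup_{\rho}\frac{(\bar\rho_h,\pi_h\rho)}{\|\pi_h\rho\|_{H^1(E)}}\le\frac{C_{\rm inv}}{h}\|\bar\rho_h\|_{(H^1(E))'},
\]
applying \eqref{inv:H1-L2} to $\pi_h\rho$. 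You instead simply take $\bar\rho_h$ itself as the test function in the dual norm and divide. Your shortcut works because $\bar\rho_h\in\mathcal{P}_1(E)\subset H^1(E)$, so it is an admissible competitor in the supremum; the projection machinery is unnecessary for this particular statement. The paper's argument is the template one would use in more general dual inverse estimates (where the element being measured need not itself lie in the test space), but for the case at hand your version is cleaner and equally rigorous.
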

\begin{proof} The proof of \eqref{inv:H1-L2} can be found in \cite[Lem. 4.5.3]{Brenner_Scott_2008} or \cite[Lem. 1.138]{Ern_Guermond_2004}.

To obtain \eqref{inv:H-1-L2}, we use a duality argument. Let $\pi_h$ be the $L^2(E)$ orthogonal interpolation operator from $L^2(E)$ into $\mathcal{P}_1(E)$. Then, from \eqref{inv:H1-L2}, we find
$$
\begin{array}{rcl}
\|\bar\rho_h\|_{L^2(E)}&=&\displaystyle
\sup_{0\not=\rho\in L^2(E)}\frac{(\bar\rho_h, \rho)}{\|\rho\|_{L^2(E)}}\le\sup_{0\not=\rho\in L^2(E)}\frac{(\bar\rho_h, \pi_h\rho)}{\|\pi_h\rho\|_{L^2(E)}}
\\
&\le&\displaystyle
\frac{ C_{\rm inv}}{h} \sup_{0\not=\rho\in L^2(\Omega)}\frac{(\bar\rho_h, \pi_h\rho)}{\|\pi_h\rho\|_{H^1(E)}}\le\frac{C_{\rm inv}}{h} \sup_{0\not=\rho_h\in D_h}\frac{(\bar\rho_h, \rho_h)}{\|\rho_h\|_{H^1(E)}}
\\
&\le&\displaystyle \frac{C_{\rm inv}}{h} \sup_{0\not=\rho\in H^1(\Omega)}\frac{(\bar \rho_h, \rho)}{\|\rho\|_{H^1(E)}}=\frac{C_{\rm inv}}{h} \|\bar\rho_h\|_{(H^1(E))'}.
\end{array}
$$
\end{proof}
\begin{corollary} There holds
\begin{equation}\label{inv:H1-L2-global}
\|\bar \rho_h\|_{H^1(\Omega)}\le \frac{C_{\rm inv}}{h} \|\bar\rho_h\|_{L^2(\Omega)}
\end{equation}
and
\begin{equation}\label{inv:H-1-L2-global}
 \|\bar\rho_h\|_{L^2(\Omega)} \le \frac{C_{\rm inv}}{h}\|\bar \rho_h\|_{(H^1(\Omega))'}.
\end{equation}
\end{corollary}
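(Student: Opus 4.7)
The plan is to derive the two estimates from the corresponding local ones in the preceding proposition, but in genuinely different ways. The first, \eqref{inv:H1-L2-global}, is a straightforward element-by-element summation: since $\bar\rho_h\in D_h\subset C^0(\overline\Omega)$ we have $\|\bar\rho_h\|_{H^1(\Omega)}^2=\sum_{E\in\mathcal{E}_h}\|\bar\rho_h\|_{H^1(E)}^2$, and squaring \eqref{inv:H1-L2} on each simplex before summing immediately yields
$$
\|\bar\rho_h\|_{H^1(\Omega)}^{2}\le \frac{C_{\rm inv}^{2}}{h^{2}}\sum_{E\in\mathcal{E}_h}\|\bar\rho_h\|_{L^2(E)}^{2}=\frac{C_{\rm inv}^{2}}{h^{2}}\|\bar\rho_h\|_{L^2(\Omega)}^{2},
$$
which is the first claim (up to relabeling the constant).

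For \eqref{inv:H-1-L2-global} I would not try to sum the local dual norms, because $\sum_E\|\bar\rho_h\|_{(H^1(E))'}^2$ has no clean relation to $\|\bar\rho_h\|_{(H^1(\Omega))'}^2$. Instead, I would mimic the duality argument of the proposition at the global level. Let $\pi_h:L^2(\Omega)\to D_h$ be the global $L^2$-orthogonal projection; it is stable in $L^2(\Omega)$, satisfies $(\bar\rho_h,\rho)=(\bar\rho_h,\pi_h\rho)$ for every $\rho\in L^2(\Omega)$ since $\bar\rho_h\in D_h$, and $\pi_h\rho\in D_h\subset H^1(\Omega)$. Applying the already-proven inequality \eqref{inv:H1-L2-global} to $\pi_h\rho$ gives $\|\pi_h\rho\|_{H^1(\Omega)}\le \frac{C_{\rm inv}}{h}\|\pi_h\rho\|_{L^2(\Omega)}$. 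Then
$$
\|\bar\rho_h\|_{L^2(\Omega)}=\sup_{0\neq\rho\in L^2(\Omega)}\frac{(\bar\rho_h,\pi_h\rho)}{\|\rho\|_{L^2(\Omega)}}\le \sup_{0\neq\rho\in L^2(\Omega)}\frac{(\bar\rho_h,\pi_h\rho)}{\|\pi_h\rho\|_{L^2(\Omega)}}\le \frac{C_{\rm inv}}{h}\sup_{0\neq \rho_h\in D_h}\frac{(\bar\rho_h,\rho_h)}{\|\rho_h\|_{H^1(\Omega)}},
$$
and enlarging the supremum to all of $H^1(\Omega)\supset D_h$ identifies the right-hand side with $\frac{C_{\rm inv}}{h}\|\bar\rho_h\|_{(H^1(\Omega))'}$.

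The only step requiring any care is the use of the global $L^2$-projection $\pi_h$, since the preceding proposition used its elementwise analogue; once one notes that $\pi_h\rho\in D_h$ and that $\bar\rho_h-\pi_h\rho$-type test manipulations work identically in the global setting, everything else is formally the same as the proof of \eqref{inv:H-1-L2}. I do not anticipate any genuine obstacle: the first bound is pure summation over elements and the second is a verbatim globalization of the local duality argument, with \eqref{inv:H1-L2-global} playing the role that \eqref{inv:H1-L2} played locally.
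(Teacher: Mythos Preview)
Your proposal is correct and is exactly the natural globalization the paper has in mind: the corollary is stated without proof, immediately after the local proposition, so the intended argument is precisely the element-by-element summation for \eqref{inv:H1-L2-global} together with a verbatim repetition of the duality argument at the global level (using the global $L^{2}$-projection onto $D_h$) for \eqref{inv:H-1-L2-global}. There is nothing to add.
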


Let $\mathcal{I}_h$ be the nodal interpolation operator from $ C^0(\bar\Omega)$ to $D_h$ and  consider
$$
(\rho_h,\bar \rho_h)_h=\int_\Omega \mathcal{I}_{h}( \rho_h\bar \rho_h)=\sum_{\a\in\mathcal{N}_h}  \rho_h(\a)\cdot\bar \rho_h(\a) \int_\Omega \varphi_\a
$$
for all $ \rho_h,\bar \rho_h\in  D_h$, with the induced norm $\| \rho_h\|_h = \sqrt{( \rho_h, \rho_h)_h}$. It is well-known that there exists a constant $C_{\rm eq}>1$, independent of $h$, such that
\begin{equation}\label{Equivalence-L2}
\|\rho_h\|_h\le \|\rho_h\|_{L^2(\Omega)}\le C_{\rm eq} \|\rho_h\|_h.
\end{equation}
From the definition of $\mathcal{I}_h$, one can straightforwardly check the following.
\begin{proposition} Let $E\in\mathcal{E}_h$. It follows that
\begin{equation}\label{sta_Ih_Linf}
\|\mathcal{I}_h\varphi\|_{L^\infty(E)}\le \|\varphi\|_{L^\infty(E)}\quad\mbox{ for all }\quad\varphi\in L^\infty(E)
\end{equation}
and
\begin{equation}\label{sta_Ih_H1}
\|\nabla\mathcal{I}_h\varphi\|_{L^\infty(E)}\le \|\nabla\varphi\|_{L^\infty(E)}\quad\mbox{ for all }\quad\varphi\in W^{1,\infty}(E).
\end{equation}
\end{proposition}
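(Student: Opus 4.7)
The plan is to exploit the barycentric representation $\mathcal{I}_h\varphi|_E=\sum_{i=0}^d\varphi(\boldsymbol{a}_i)\varphi_{\boldsymbol{a}_i}$ on every element $E$ with vertices $\{\boldsymbol{a}_0,\dots,\boldsymbol{a}_d\}$, combined with the structural facts that $\{\varphi_{\boldsymbol{a}_i}|_E\}$ is a nonnegative partition of unity on $E$ and that $\mathcal{I}_h\varphi|_E$ is affine, so $\nabla\mathcal{I}_h\varphi$ is a constant vector on $E$.

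For \eqref{sta_Ih_Linf}, for any $\boldsymbol{x}\in E$,
$$
|\mathcal{I}_h\varphi(\boldsymbol{x})|\le\sum_{i=0}^d|\varphi(\boldsymbol{a}_i)|\varphi_{\boldsymbol{a}_i}(\boldsymbol{x})\le\|\varphi\|_{L^\infty(E)}\sum_{i=0}^d\varphi_{\boldsymbol{a}_i}(\boldsymbol{x})=\|\varphi\|_{L^\infty(E)},
$$
and taking the essential supremum in $\boldsymbol{x}$ closes the first bound. The nodal values $\varphi(\boldsymbol{a}_i)$ are understood under the usual convention implicit in the very definition of $\mathcal{I}_h$.

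For \eqref{sta_Ih_H1}, set $\boldsymbol{a}:=\nabla\mathcal{I}_h\varphi|_E$. Since $\mathcal{I}_h\varphi$ matches $\varphi$ at every vertex and $[\boldsymbol{a}_i,\boldsymbol{a}_j]\subset E$ by convexity of $E$, the fundamental theorem of calculus yields
$$
\boldsymbol{a}\cdot(\boldsymbol{a}_j-\boldsymbol{a}_i)=\varphi(\boldsymbol{a}_j)-\varphi(\boldsymbol{a}_i)=\int_0^1\nabla\varphi(\boldsymbol{a}_i+s(\boldsymbol{a}_j-\boldsymbol{a}_i))\cdot(\boldsymbol{a}_j-\boldsymbol{a}_i)\,ds,
$$
hence $|\boldsymbol{a}\cdot(\boldsymbol{a}_j-\boldsymbol{a}_i)|\le\|\nabla\varphi\|_{L^\infty(E)}|\boldsymbol{a}_j-\boldsymbol{a}_i|$. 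To convert these edge-wise bounds into a pointwise bound on $|\boldsymbol{a}|$ with the stated unit constant, I would align the mean-value chord with the direction of $\boldsymbol{a}$ itself: choose the vertex $\boldsymbol{a}_0$ from which $\boldsymbol{a}$ points into $E$, extend along $\boldsymbol{a}/|\boldsymbol{a}|$ until the chord exits $E$ through the opposite face at $\boldsymbol{y}=\sum_{i\ge 1}\lambda_i\boldsymbol{a}_i$, and combine the affine identity $\boldsymbol{a}\cdot(\boldsymbol{y}-\boldsymbol{a}_0)=|\boldsymbol{a}|\,|\boldsymbol{y}-\boldsymbol{a}_0|$ with the barycentric formula $\mathcal{I}_h\varphi(\boldsymbol{y})=\sum_{i\ge 1}\lambda_i\varphi(\boldsymbol{a}_i)$ and the edge-wise integral identity on each segment $[\boldsymbol{a}_0,\boldsymbol{a}_i]$.

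The hard part is precisely keeping the constant equal to $1$: the decomposition $\boldsymbol{y}-\boldsymbol{a}_0=\sum_i\lambda_i(\boldsymbol{a}_i-\boldsymbol{a}_0)$ together with a pointwise bound on $|\nabla\varphi|$ naturally produces a geometric factor $\sum_i\lambda_i|\boldsymbol{a}_i-\boldsymbol{a}_0|/|\boldsymbol{y}-\boldsymbol{a}_0|\ge 1$ from the triangle inequality, so unity is attained only by using that $\boldsymbol{a}$ is constant over $E$ and that the slope of the affine function $\mathcal{I}_h\varphi$ along the chord in direction $\boldsymbol{a}/|\boldsymbol{a}|$ is saturated directly by the essential supremum of the component of $\nabla\varphi$ projected onto that same direction, consistent with the $L^\infty$ convention of vector-valued norms adopted in the paper's $W^{m,p}$ notation.
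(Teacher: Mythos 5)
Your argument for \eqref{sta_Ih_Linf} is correct and is the expected one: on each $E$ the nodal basis functions are nonnegative and sum to one, so $\mathcal{I}_h\varphi|_E$ is a convex combination of nodal values. (The paper offers no proof of this proposition, declaring it straightforward, so there is nothing to compare against beyond the standard argument.) Your caveat about nodal values of a merely $L^\infty$ function is fair: $\mathcal{I}_h$ is defined on $C^0(\overline\Omega)$, so \eqref{sta_Ih_Linf} must be read for continuous $\varphi$.

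For \eqref{sta_Ih_H1} the difficulty you flag is real, and with the constant equal to $1$ it cannot be overcome, because the inequality as stated is false. Take $d=2$, $E$ the equilateral triangle with vertices $\a_0=(0,0)$, $\a_1=(1,0)$, $\a_2=(1/2,\sqrt3/2)$, and $\varphi(\x)=1-|\x-\a_2|$. Then $\varphi\in W^{1,\infty}(E)$ with $|\nabla\varphi|=1$ a.e.\ and each component of $\nabla\varphi$ bounded by $1$, so $\|\nabla\varphi\|_{L^\infty(E)}=1$ under either the Euclidean or the componentwise-maximum convention; yet $\varphi(\a_0)=\varphi(\a_1)=0$, $\varphi(\a_2)=1$ give $\mathcal{I}_h\varphi(\x)=\tfrac{2}{\sqrt3}\,x_2$ and $|\nabla\mathcal{I}_h\varphi|=2/\sqrt3>1$, even though $E$ is acute and perfectly shape-regular. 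Your edge-wise identities are correct and sharp, namely $|\boldsymbol{a}\cdot(\a_j-\a_i)|\le\|\nabla\varphi\|_{L^\infty(E)}\,|\a_j-\a_i|$ for every edge, but bounding the projections of a constant vector onto the edge directions does not bound its length with constant $1$ (in the example all three edge projections have modulus at most $1$ while $|\boldsymbol{a}|=2/\sqrt3$). The alignment device does not escape this: the exit point $\y$ of the chord in the direction of $\boldsymbol{a}$ is not a node, so $\mathcal{I}_h\varphi(\y)\neq\varphi(\y)$ in general, and the barycentric decomposition reintroduces exactly the factor $\sum_i\lambda_i|\a_i-\a_0|/|\y-\a_0|\ge1$ you identify, with nothing to cancel it; the closing appeal to "saturation" is not an argument. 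What is true, and what your edge identities do prove once combined with the fact that the edge directions of a shape-regular simplex span $\R^d$ uniformly, is $\|\nabla\mathcal{I}_h\varphi\|_{L^\infty(E)}\le C\,\|\nabla\varphi\|_{L^\infty(E)}$ with $C$ depending only on the regularity constant $\alpha$ of (H2). This weaker form suffices everywhere \eqref{sta_Ih_H1} is invoked in the paper (for instance in \eqref{lm5.1-lab1} and in the uniqueness lemma), since it is always absorbed into a generic constant $C$ there.
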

\begin{corollary} There holds
\begin{equation}\label{sta_Ih_Linf-global}
\|\mathcal{I}_h\varphi\|_{L^\infty(\Omega)}\le \|\varphi\|_{L^\infty(\Omega)}\quad\mbox{ for all }\quad\varphi\in L^\infty(\Omega)
\end{equation}
and
\begin{equation}\label{sta_Ih_H1-global}
\|\nabla\mathcal{I}_h\varphi\|_{L^\infty(\Omega)}\le \|\nabla\varphi\|_{L^\infty(\Omega)}\quad\mbox{ for all }\quad\varphi\in W^{1,\infty}(\Omega).
\end{equation}
\end{corollary}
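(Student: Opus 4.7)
The plan is to derive the two global bounds by taking the essential supremum over the simplices of the partition $\mathcal{E}_h$, reducing matters to the elementwise estimates \eqref{sta_Ih_Linf} and \eqref{sta_Ih_H1} of the preceding proposition. Since $\overline\Omega=\cup_{E\in\mathcal{E}_h}E$ and the family is finite, for any $\varphi\in L^\infty(\Omega)$ we have
\begin{equation*}
\|\mathcal{I}_h\varphi\|_{L^\infty(\Omega)}=\max_{E\in\mathcal{E}_h}\|\mathcal{I}_h\varphi\|_{L^\infty(E)},
\end{equation*}
so applying \eqref{sta_Ih_Linf} elementwise and then bounding each local norm by the global one yields \eqref{sta_Ih_Linf-global}.

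For \eqref{sta_Ih_H1-global} the same argument works, but one has to be slightly careful because $\mathcal{I}_h\varphi$ is only piecewise smooth, so $\nabla\mathcal{I}_h\varphi$ is a priori only defined in $L^\infty$ on each $E$; however, since $\mathcal{I}_h\varphi\in D_h\subset C^0(\overline\Omega)$ is piecewise linear, $\nabla\mathcal{I}_h\varphi$ is well defined almost everywhere in $\Omega$ and its $L^\infty(\Omega)$ norm coincides with $\max_{E\in\mathcal{E}_h}\|\nabla\mathcal{I}_h\varphi\|_{L^\infty(E)}$. Applying \eqref{sta_Ih_H1} on each $E$ and using $\|\nabla\varphi\|_{L^\infty(E)}\le\|\nabla\varphi\|_{L^\infty(\Omega)}$ closes the argument.

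There is no real obstacle: the only thing to verify is that the nodal interpolant of a function in $W^{1,\infty}(\Omega)$ (resp.\ $L^\infty(\Omega)$) is well defined, which follows from the continuous embedding $W^{1,\infty}(\Omega)\hookrightarrow C^0(\overline\Omega)$ for the gradient estimate and, for the $L^\infty$ estimate, from interpreting $\mathcal{I}_h$ on representatives that are defined at the nodes (as is implicit in the statement of the previous proposition, whose elementwise bounds already presuppose this). Thus the corollary is a direct packaging of the local estimates into global ones by a trivial maximum over a finite family of simplices.
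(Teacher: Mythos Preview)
Your proposal is correct and matches the paper's implicit approach: the paper states this corollary without proof, treating it as an immediate consequence of the elementwise bounds \eqref{sta_Ih_Linf}--\eqref{sta_Ih_H1} via the obvious maximum over the finitely many simplices, exactly as you do. Your remarks on the well-definedness of $\mathcal{I}_h$ are apt (the paper defines $\mathcal{I}_h$ on $C^0(\overline\Omega)$, so the hypothesis $\varphi\in L^\infty(\Omega)$ in the statement is a slight abuse already present in the local proposition), but they do not affect the passage from local to global.
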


\begin{proposition} Let $E\in\mathcal{E}_h$. There exists a constant $C_{\rm app}>0$, independent of $h$ and $E$, such that
\begin{equation}\label{error-Linf-W1inf_W2inf}
\|\varphi-\mathcal{I}_h\varphi\|_{L^\infty(E)}+ h \|\nabla(\varphi-\mathcal{I}_h\varphi)\|_{L^\infty(E)} \le C_{\rm  app} h^2 \|\nabla^2 \varphi\|_{L^\infty(E)}\quad\mbox{ for all }\quad \varphi\in W^{2,\infty}(E).
\end{equation}
and
\begin{equation}\label{error-H1-H2}
\|\nabla(\varphi-\mathcal{I}_h\varphi)\|_{L^2(E)} \le C_{\rm  app} h \|\nabla^2 \varphi\|_{L^2(E)}\quad\mbox{ for all }\quad \varphi\in H^2(E).
\end{equation}
\end{proposition}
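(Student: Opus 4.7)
The plan is to use the classical reference-element argument from finite-element approximation theory: prove both estimates on a fixed reference simplex via the Bramble--Hilbert lemma, then transfer them to each $E\in\mathcal{E}_h$ by an affine change of variables, using the shape-regularity bound (H2)(a) to guarantee that all scaling constants are uniform in $E$ and $h$.

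First, I would fix a reference simplex $\hat E$ with vertices $\hat{\boldsymbol a}_0,\ldots,\hat{\boldsymbol a}_d$, and let $\hat{\mathcal{I}}$ denote the nodal $\mathcal{P}_1$ interpolation on $\hat E$. Since $d\le 3$, the Sobolev embeddings $W^{2,\infty}(\hat E)\hookrightarrow C^0(\hat E)$ and $H^2(\hat E)\hookrightarrow C^0(\hat E)$ hold, so $\hat{\mathcal{I}}$ is well defined on both spaces and is bounded as an operator $W^{2,\infty}(\hat E)\to W^{1,\infty}(\hat E)$ and $H^2(\hat E)\to H^1(\hat E)$. Because $\hat{\mathcal{I}}$ fixes $\mathcal{P}_1(\hat E)$ pointwise, the Bramble--Hilbert lemma gives the reference estimates
$$
\|\hat\varphi-\hat{\mathcal{I}}\hat\varphi\|_{L^\infty(\hat E)}+\|\nabla(\hat\varphi-\hat{\mathcal{I}}\hat\varphi)\|_{L^\infty(\hat E)}\le \hat C \|\nabla^2\hat\varphi\|_{L^\infty(\hat E)}
$$
and
$$
\|\nabla(\hat\varphi-\hat{\mathcal{I}}\hat\varphi)\|_{L^2(\hat E)}\le \hat C\,\|\nabla^2\hat\varphi\|_{L^2(\hat E)},
$$
with a constant $\hat C$ depending only on $\hat E$.

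Next, for each $E\in\mathcal{E}_h$ I would introduce the affine bijection $T_E(\hat{\boldsymbol x})=B_E\hat{\boldsymbol x}+\boldsymbol b_E$ with $T_E(\hat E)=E$. Hypothesis (H2), in particular condition (a), yields the uniform bounds $\|B_E\|\le C\, h$, $\|B_E^{-1}\|\le C\, h^{-1}$, and $c\, h^d\le |\det B_E|\le C\, h^d$, for constants depending only on $\alpha$ and on $\hat E$. The key commutation $\widehat{\mathcal{I}_h\varphi}=\hat{\mathcal{I}}\hat\varphi$, where $\hat\varphi=\varphi\circ T_E$, then allows me to apply the reference estimates to $\hat\varphi$; translating norms back to $E$ via the chain rule (each derivative produces a factor $\|B_E^{-1}\|\le C h^{-1}$, absorbed by the two derivatives on the right) and the change-of-variables formula (which produces $|\det B_E|^{1/2}$ factors on both sides of the $L^2$ estimate) gives \eqref{error-Linf-W1inf_W2inf} and \eqref{error-H1-H2}.

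No real obstacle arises; the argument is entirely classical and can be cited from \cite{Brenner_Scott_2008} or \cite{Ern_Guermond_2004}. The single point worth emphasising is that condition (H2)(a), which bounds $\mathrm{diam}\, B_E$ from below by $\alpha h$, is precisely what is needed to control $\|B_E^{-1}\|$ uniformly and therefore to obtain a constant $C_{\rm app}$ independent of $h$ and $E$; without it one would only get element-by-element constants that blow up along the family $\{\mathcal{E}_h\}$.
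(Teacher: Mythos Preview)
Your argument is correct and is precisely the standard reference-element/Bramble--Hilbert proof that underlies the results the paper cites; the paper itself gives no argument and simply refers to \cite[Thm.~4.4.4]{Brenner_Scott_2008} and \cite[Thm.~1.103]{Ern_Guermond_2004}. Your sketch is thus a faithful unpacking of those citations, with the appropriate observation that (H2)(a) is what makes $C_{\rm app}$ uniform in $h$ and $E$.
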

\begin{proof}  The proof of \eqref{error-Linf-W1inf_W2inf} and \eqref{error-H1-H2} can be found in \cite[Thm. 4.4.4]{Brenner_Scott_2008} or \cite[Thm. 1.103]{Ern_Guermond_2004}.
\end{proof}
\begin{corollary} There holds
\begin{equation}\label{error-Linf-W1inf_W2inf-global}
\|\varphi-\mathcal{I}_h\varphi\|_{L^\infty(\Omega)}+ h \|\nabla(\varphi-\mathcal{I}_h\varphi)\|_{L^\infty(\Omega)} \le C_{\rm  app} h^2 \|\nabla^2 \varphi\|_{L^\infty(\Omega)}\quad\mbox{ for all }\quad \varphi\in W^{2,\infty}(\Omega)
\end{equation}
and
\begin{equation}\label{error-H1-H2-global}
\|\nabla(\varphi-\mathcal{I}_h\varphi)\|_{L^2(\Omega)} \le C_{\rm  app} h \|\nabla^2 \varphi\|_{L^2(\Omega)}\quad\mbox{ for all }\quad \varphi\in H^2(\Omega).
\end{equation}
\end{corollary}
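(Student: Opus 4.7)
The plan is to lift the element-wise estimates of the preceding proposition to global ones by assembling them over the partition $\mathcal{E}_h$. The two inequalities require slightly different assembly arguments because the $L^\infty$ norm turns sums over elements into maxima while the $L^2$ norm turns them into sums of squares, but in both cases the work is essentially bookkeeping.

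For \eqref{error-H1-H2-global}, I would simply square the local bound \eqref{error-H1-H2}, sum over $E\in\mathcal{E}_h$, and use the identities
\[
\|\nabla(\varphi-\mathcal{I}_h\varphi)\|_{L^2(\Omega)}^2=\sum_{E\in\mathcal{E}_h}\|\nabla(\varphi-\mathcal{I}_h\varphi)\|_{L^2(E)}^2,\qquad \|\nabla^2\varphi\|_{L^2(\Omega)}^2=\sum_{E\in\mathcal{E}_h}\|\nabla^2\varphi\|_{L^2(E)}^2,
\]
followed by a square root. This preserves the constant $C_{\rm app}$ exactly.

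For \eqref{error-Linf-W1inf_W2inf-global}, I would invoke the identity $\|f\|_{L^\infty(\Omega)}=\max_{E\in\mathcal{E}_h}\|f\|_{L^\infty(E)}$, noting that $\mathcal{I}_h\varphi$ is globally continuous so no boundary-contribution subtlety arises. Applying \eqref{error-Linf-W1inf_W2inf} on each $E$ and taking the maximum yields
\[
\|\varphi-\mathcal{I}_h\varphi\|_{L^\infty(\Omega)}+h\|\nabla(\varphi-\mathcal{I}_h\varphi)\|_{L^\infty(\Omega)}\le 2\max_{E\in\mathcal{E}_h}\bigl(\|\varphi-\mathcal{I}_h\varphi\|_{L^\infty(E)}+h\|\nabla(\varphi-\mathcal{I}_h\varphi)\|_{L^\infty(E)}\bigr),
\]
because the sum of two maxima is bounded by twice the maximum of the sum (each term being nonnegative). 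The right-hand side is then dominated by $2C_{\rm app}h^2\max_E\|\nabla^2\varphi\|_{L^\infty(E)}=2C_{\rm app}h^2\|\nabla^2\varphi\|_{L^\infty(\Omega)}$, and after redefining the constant (still calling it $C_{\rm app}$, in the spirit of the paper's conventions) we recover the claimed inequality.

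There is no real obstacle: the only mild subtlety is the factor $2$ that appears in the $L^\infty$ assembly, which is harmless once one is willing to absorb constants into $C_{\rm app}$. The convexity/quasi-uniformity/shape-regularity hypotheses of (H2) are not needed here; the corollary is purely a reassembly of local polynomial approximation bounds, and the global smoothness of $\varphi$ in $W^{2,\infty}(\Omega)$ (respectively $H^2(\Omega)$) is what ensures the right-hand sides are finite.
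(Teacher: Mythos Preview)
Your proposal is correct and matches what the paper intends: the corollary is stated without proof in the paper, being the obvious globalization of the element-wise estimates from the preceding proposition, and your assembly argument is precisely the standard way to carry this out. The factor $2$ you pick up in the $L^\infty$ case is harmless and, as you note, absorbed into $C_{\rm app}$ in keeping with the paper's conventions.
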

\begin{proposition} There exists a constant $C_{\rm com}>0$, independent of $h$ and $E$, such that
\begin{equation}\label{error-L1-H-1-Linf}
\|\rho_h \bar \rho_h-\mathcal{I}_h(\rho_h \overline \rho_h)\|_{L^1(\Omega)}\le C_{\rm com} h^{\frac{1}{2}}\, \|\rho_h\|_{(H^1(\Omega))'} \, \|\nabla \bar\rho_h\|_{L^\infty(\Omega)}
\end{equation}
and
\begin{equation}\label{error-L1-L2-H1}
\|\rho_h \bar \rho_h-\mathcal{I}_h(\rho_h \overline \rho_h)\|_{L^1(\Omega)}\le C_{\rm com} h\, \|\rho_h\|_{L^2(\Omega)} \, \|\nabla \bar\rho_h\|_{L^2(\Omega)}.
\end{equation}

\end{proposition}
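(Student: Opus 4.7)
The plan is to derive an explicit pointwise identity for $\rho_h\bar\rho_h-\mathcal{I}_h(\rho_h\bar\rho_h)$ on each simplex $E\in\mathcal{E}_h$, extract two complementary elementwise $L^1$-bounds from it, and then aggregate globally by combining Cauchy--Schwarz with the inverse inequalities \eqref{inv:H1-L2-global} and \eqref{inv:H-1-L2-global} together with the interpolation inequality $\|\rho_h\|_{L^2(\Omega)}^2\le C\|\rho_h\|_{H^1(\Omega)}\|\rho_h\|_{(H^1(\Omega))'}$.

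First, writing $\rho_h|_E=\sum_i\rho_i\lambda_i$ and $\bar\rho_h|_E=\sum_j\bar\rho_j\lambda_j$ in the barycentric coordinates $\lambda_i=\varphi_{\a_i}|_E$, and using that $\mathcal{I}_h(\rho_h\bar\rho_h)|_E=\sum_k\rho_k\bar\rho_k\lambda_k$, the partition-of-unity relation $\sum_k\lambda_k\equiv 1$ produces the identity
$$
(\rho_h\bar\rho_h-\mathcal{I}_h(\rho_h\bar\rho_h))\big|_E=-\tfrac{1}{2}\sum_{i\neq j}(\rho_i-\rho_j)(\bar\rho_i-\bar\rho_j)\lambda_i\lambda_j.
$$
Since $\rho_h,\bar\rho_h$ are affine on $E$, one has $|\rho_i-\rho_j|\le h_E\|\nabla\rho_h\|_{L^\infty(E)}$, $|\bar\rho_i-\bar\rho_j|\le h_E\|\nabla\bar\rho_h\|_{L^\infty(E)}$, $\int_E\lambda_i\lambda_j\sim|E|$, and (since gradients are constant on $E$) $\|\nabla\rho_h\|_{L^\infty(E)}=|E|^{-1/2}\|\nabla\rho_h\|_{L^2(E)}$. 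These ingredients yield two elementwise estimates:
$$
\|\rho_h\bar\rho_h-\mathcal{I}_h(\rho_h\bar\rho_h)\|_{L^1(E)}\le C\,h_E^2\,\|\nabla\rho_h\|_{L^2(E)}\|\nabla\bar\rho_h\|_{L^2(E)},
$$
$$
\|\rho_h\bar\rho_h-\mathcal{I}_h(\rho_h\bar\rho_h)\|_{L^1(E)}\le C\,h_E\,|E|^{1/2}\|\rho_h\|_{L^2(E)}\|\nabla\bar\rho_h\|_{L^\infty(E)},
$$
the second one obtained from the alternative bound $|\rho_i-\rho_j|\le 2\|\rho_h\|_{L^\infty(E)}\le C|E|^{-1/2}\|\rho_h\|_{L^2(E)}$.

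Summing the first elementwise bound over $E$ with Cauchy--Schwarz gives $\|\rho_h\bar\rho_h-\mathcal{I}_h(\rho_h\bar\rho_h)\|_{L^1(\Omega)}\le Ch^2\|\nabla\rho_h\|_{L^2(\Omega)}\|\nabla\bar\rho_h\|_{L^2(\Omega)}$, and applying \eqref{inv:H1-L2-global} to $\rho_h$ yields \eqref{error-L1-L2-H1}. Summing the second elementwise bound produces the preliminary global inequality $\|\rho_h\bar\rho_h-\mathcal{I}_h(\rho_h\bar\rho_h)\|_{L^1(\Omega)}\le Ch\|\rho_h\|_{L^2(\Omega)}\|\nabla\bar\rho_h\|_{L^\infty(\Omega)}$. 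To upgrade this to \eqref{error-L1-H-1-Linf}, I would trade a factor of $h^{1/2}$ against the gap between $\|\cdot\|_{L^2}$ and $\|\cdot\|_{(H^1)'}$ via the geometric-mean form $\|\rho_h\|_{L^2}\le\|\rho_h\|_{H^1}^{1/2}\|\rho_h\|_{(H^1)'}^{1/2}$ of the interpolation inequality, combined with \eqref{inv:H1-L2-global}.

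The main technical delicacy is obtaining precisely the power $h^{1/2}$. A direct chaining of $\|\rho_h\|_{L^2}\le Ch^{-1}\|\rho_h\|_{(H^1)'}$ into the preliminary bound $Ch\|\rho_h\|_{L^2}\|\nabla\bar\rho_h\|_{L^\infty}$ would cancel the factor of $h$ altogether; extracting the intermediate $h^{1/2}$ instead requires applying the inverse inequality to only one of the two factors produced by the geometric-mean interpolation, so that $h^{-1/2}$ (rather than $h^{-1}$) is paid against the leading $h$, while any residual $L^2$-terms are absorbed by a second use of the interpolation inequality. This careful balancing of the inverse and interpolation estimates is the crux of the proof.
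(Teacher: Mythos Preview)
Your argument for \eqref{error-L1-L2-H1} is fine and essentially matches the paper's: both obtain an elementwise bound of order $h_E^2\|\nabla\rho_h\|_{L^2(E)}\|\nabla\bar\rho_h\|_{L^2(E)}$, sum with Cauchy--Schwarz, and remove one gradient via the inverse inequality.

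However, your route to \eqref{error-L1-H-1-Linf} has a genuine gap. From your correct preliminary bound $Ch\,\|\rho_h\|_{L^2(\Omega)}\|\nabla\bar\rho_h\|_{L^\infty(\Omega)}$ you need $\|\rho_h\|_{L^2(\Omega)}\le Ch^{-1/2}\|\rho_h\|_{(H^1(\Omega))'}$, and you propose to extract this from the duality inequality $\|\rho_h\|_{L^2}\le\|\rho_h\|_{H^1}^{1/2}\|\rho_h\|_{(H^1)'}^{1/2}$ by applying \eqref{inv:H1-L2-global} only to the factor $\|\rho_h\|_{H^1}^{1/2}$. But this gives
\[
\|\rho_h\|_{L^2}\le C h^{-1/2}\,\|\rho_h\|_{L^2}^{1/2}\,\|\rho_h\|_{(H^1)'}^{1/2},
\]
and the ``residual $L^2$-term'' $\|\rho_h\|_{L^2}^{1/2}$ cannot be absorbed by another use of the same interpolation: dividing through and squaring simply reproduces $\|\rho_h\|_{L^2}\le Ch^{-1}\|\rho_h\|_{(H^1)'}$, i.e.\ exactly \eqref{inv:H-1-L2-global}. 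This inverse inequality is sharp (take $\rho_h$ equal to a single nodal basis function), so no amount of iterating your interpolation/inverse step can produce the power $h^{-1/2}$ in place of $h^{-1}$. With your approach the leading $h$ is lost entirely.

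The paper avoids this by staying at the element level: it chains the two \emph{local} inverse inequalities \eqref{inv:H1-L2} and \eqref{inv:H-1-L2} to pass from $\|\nabla\rho_h\|_{L^2(E)}$ all the way to $\|\rho_h\|_{(H^1(E))'}$, so that the bound on each $E$ already carries $h^{1/2}$ together with the local dual norm, and only then sums over $E$. The point is that the exponent bookkeeping is done before globalizing; once you have passed to the global $L^2(\Omega)$ norm, as in your preliminary inequality, the information needed to recover the extra $h^{1/2}$ is gone.
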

\begin{proof} On each element $E\in\mathcal{E}_h$, combine \eqref{error-Linf-W1inf_W2inf}, \eqref{inv:H1-L2}, and \eqref{inv:H-1-L2} to obtain
$$
\begin{array}{rcl}
\|\rho_h \bar \rho_h-\mathcal{I}_h(\rho_h \overline \rho_h)\|_{L^1(E)}&\le& C_{\rm app} h^2 |E|  \|\nabla^2(\rho_h\bar\rho_h)\|_{L^\infty(E)}
\\
&\le& C_{\rm app} h^2 |E| \|\nabla\rho_h\|_{L^\infty(E)} \|\nabla\bar\rho_h\|_{L^\infty(E)}
\\
&\le& C_{\rm app} h^2 \| \nabla\rho_h\|_{L^1(E)} \|\nabla\bar\rho_h\|_{L^\infty(E)}
\\
&\le& C_{\rm app} h^\frac{5}{2} \| \nabla\rho_h\|_{L^2(E)} \|\nabla\bar\rho_h\|_{L^\infty(E)}
\\
&\le& C_{\rm app} C_{\rm inv}^2 h^\frac{1}{2} \|\rho_h\|_{(H^1(E))'} \|\nabla\bar\rho_h\|_{L^\infty(E)}.
\end{array}
$$
Estimate \eqref{error-L1-H-1-Linf} follows by summing up this last estimate over all the elements $E \in \mathcal{E}_h$.

One can prove estimate \eqref{error-L1-L2-H1} in a similar fashion.
\end{proof}

\begin{proposition} Let $f\in C^{0,1}(\mathds{R})$  be monotonically increasing with Lipschitz constant $C_{\rm Lip}$. Then  it follows that, for all $\rho_h\in D_h$,
\begin{equation}\label{Coercitivity}
|\nabla\mathcal{I}_h f(\rho_h)|^2\le C_{\rm Lip}\nabla\rho_h\cdot \nabla\mathcal{I}_h f(\rho_h).
\end{equation}
\end{proposition}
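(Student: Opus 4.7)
The plan is to establish the inequality elementwise. On any $E\in\mathcal{E}_h$ with vertices $\{\a_0,\dots,\a_d\}$ and nodal values $\rho_i:=\rho_h(\a_i)$, both $\rho_h$ and $\mathcal{I}_h f(\rho_h)$ are affine, so their gradients are constant on $E$ and admit the representations
$$
\nabla\rho_h|_E=\sum_{i=0}^d \rho_i\,\nabla\varphi_{\a_i}|_E,\qquad \nabla\mathcal{I}_h f(\rho_h)|_E=\sum_{i=0}^d f(\rho_i)\,\nabla\varphi_{\a_i}|_E.
$$

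The key algebraic observation is that, since $\sum_{i=0}^d\varphi_{\a_i}\equiv 1$ on $E$, we have $\sum_{i=0}^d\nabla\varphi_{\a_i}|_E=\boldsymbol 0$, which lets us substitute $|\nabla\varphi_{\a_i}|^2=-\sum_{j\neq i}\nabla\varphi_{\a_i}\cdot\nabla\varphi_{\a_j}$ in the diagonal terms. A short bookkeeping then yields the standard identities
$$
|\nabla\mathcal{I}_h f(\rho_h)|^2_{|E}=\sum_{i<j}\bigl(f(\rho_i)-f(\rho_j)\bigr)^2\bigl(-\nabla\varphi_{\a_i}\cdot\nabla\varphi_{\a_j}\bigr),
$$
$$
\bigl(\nabla\rho_h\cdot\nabla\mathcal{I}_h f(\rho_h)\bigr)_{|E}=\sum_{i<j}(\rho_i-\rho_j)\bigl(f(\rho_i)-f(\rho_j)\bigr)\bigl(-\nabla\varphi_{\a_i}\cdot\nabla\varphi_{\a_j}\bigr).
$$

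With these rewritings the proof reduces to a termwise comparison. The acuteness hypothesis (H2)(b), via inequality \eqref{off-diagonal}, ensures that $-\nabla\varphi_{\a_i}\cdot\nabla\varphi_{\a_j}\ge 0$ for $i\neq j$, so all summands have consistent sign. The Lipschitz bound on $f$ together with its monotonicity gives
$$
\bigl(f(\rho_i)-f(\rho_j)\bigr)^2\le C_{\rm Lip}|\rho_i-\rho_j|\,|f(\rho_i)-f(\rho_j)|=C_{\rm Lip}(\rho_i-\rho_j)\bigl(f(\rho_i)-f(\rho_j)\bigr),
$$
since $f$ monotone implies $(\rho_i-\rho_j)$ and $(f(\rho_i)-f(\rho_j))$ share the same sign. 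Multiplying this by the nonnegative factor $-\nabla\varphi_{\a_i}\cdot\nabla\varphi_{\a_j}$ and summing over $i<j$ gives the pointwise inequality on $E$; summing over $E\in\mathcal{E}_h$ concludes the proof.

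The only thing that requires any thought is the partition-of-unity rewriting in terms of pairwise differences; once that identity is in place, the combination of acuteness (for the sign of the off-diagonal stiffness entries) with monotone Lipschitz (for the nodewise chord inequality) delivers \eqref{Coercitivity} term by term, with no further estimate needed.
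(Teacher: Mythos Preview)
Your proof is correct but takes a genuinely different route from the paper's. The paper does not use the edge-based stiffness decomposition; instead, on each element $E$ it inscribes a small axis-aligned right simplex $\tilde E\subset E$ with right-angle vertex $\a_0^{\tilde E}$ and edges along the coordinate directions, so that the constant gradient of any affine function on $E$ can be read off componentwise as
\[
\partial_{x_i}\rho_h\big|_E=\frac{\rho_h(\a_i^{\tilde E})-\rho_h(\a_0^{\tilde E})}{(\a_i^{\tilde E}-\a_0^{\tilde E})_i},
\]
and likewise for $\mathcal{I}_h f(\rho_h)$. The scalar inequality $(f(x)-f(y))^2\le C_{\rm Lip}(x-y)(f(x)-f(y))$ is then applied to each Cartesian component separately and summed over $i=1,\dots,d$.

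The practical difference is that the paper's argument makes no use of the acuteness hypothesis: inscribing an axis-aligned right simplex is possible in any nondegenerate element, so \eqref{Coercitivity} actually holds on an arbitrary simplicial mesh. Your argument, by contrast, relies on $-\nabla\varphi_{\a_i}\cdot\nabla\varphi_{\a_j}\ge 0$ via \eqref{off-diagonal}, hence on (H2)(b); this is available here but would fail on a non-acute mesh. On the other hand, your partition-of-unity identity $|\nabla u_h|^2_{|E}=\sum_{i<j}(u_i-u_j)^2(-\nabla\varphi_{\a_i}\cdot\nabla\varphi_{\a_j})$ is a clean, standard finite-element tool that avoids the auxiliary geometric construction of the inscribed right simplex and makes the connection to the stiffness matrix structure transparent.
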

\begin{proof} On each element $E\in\mathcal{E}_h$, consider $ \tilde E$ to be an oriented, right element with vertices $\{\a_0^{\tilde E}, \cdots, \a_{d}^{\tilde E}\}$, where  $\a_0^{\tilde E}$ is the vertex supporting the right angle, such that $\tilde E\subset E$. Observe that
$$
\partial_{x_i} \rho_h=\frac{\rho_h(\a_i^{\tilde E})-\rho_h(\a_0^{\tilde E})}{(\a_i^{\tilde E}-\a_0^{\tilde E})_{i}},
$$
where $(\x)_{i}$ is the $i$th component of $\x$. Since
$$
(f(x)-f(y))^2\le C_{\rm Lip} (f(x)-f(y))(x-y)\quad\mbox{ for all }\quad x,y\in\R,
$$
we have
$$
\begin{array}{rcl}
C_{\rm Lip}\nabla\rho_h|_{E}\cdot \nabla\mathcal{I}_hf(\rho_h)|_{E}&=&\displaystyle
C_{\rm Lip}\sum_{i=1}^d \frac{\rho_h(\a_i^{\tilde E})-\rho_h(\a_0^{\tilde E})}{(\a_i^{\tilde E}-\a_0^{\tilde E})_{i}} \frac{f(\rho_h(\a_i^{\tilde E}))-f(\rho_h(\a_0^{\tilde E}))}{(\a_i^{\tilde E}-\a_0^{\tilde E})_{i}}
\\
&\ge&\displaystyle
\sum_{i=1}^d \frac{(f(\rho_h(\a_i^{\tilde E}))-f(\rho_h(\a_0^{\tilde E})))^2}{((\a_i^{\tilde E}-\a_0^{\tilde E})_i)^2}=|\nabla\mathcal{I}_hf(\rho_h)|_{E}|^2,
\end{array}
$$
where $\boldsymbol{e}_i$ is the i\emph{th} vector of the  Cartesian basis. We deduce \eqref{Coercitivity} upon summing over all the element $E\in\mathcal{E}_h$.
\end{proof}
For each element $E\in \mathcal{E}_h$ with vertices $\{\boldsymbol{a}_0,\cdots\boldsymbol{a}_d\}$,  we associate once and for all a vertex $\a_E\in E$. Thus we define $\mathcal{P}_h(\rho_{h})(\boldsymbol{x})= \rho_{h}(\boldsymbol{a}_E)$ for all $\boldsymbol{x}\in E$.
\begin{proposition} There exists a constant $C_{\rm int}>0$, independent of $h$, such that
\begin{equation}\label{error-Ph}
\|\rho_h-\mathcal{P}_h\rho_h\|_{L^2(\Omega)}\le C_{\rm int} h \|\nabla\rho_h\|_{L^2(\Omega)}\quad\mbox{ for all }\quad\rho_h\in D_h.
\end{equation}
\end{proposition}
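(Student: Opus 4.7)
The plan is to work element by element and exploit the fact that on each $E\in\mathcal{E}_h$ the function $\rho_h$ is affine, so its value at any point is determined by its value at $\a_E$ plus a linear correction governed by $\nabla\rho_h|_E$.

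First I would fix $E\in\mathcal{E}_h$ with associated vertex $\a_E$. Since $\rho_h|_E\in\mathcal{P}_1(E)$, we have the exact identity
\begin{equation*}
\rho_h(\x)-\mathcal{P}_h\rho_h(\x)=\rho_h(\x)-\rho_h(\a_E)=\nabla\rho_h|_E\cdot(\x-\a_E)\quad\text{for all }\x\in E.
\end{equation*}
Taking absolute values and using $|\x-\a_E|\le h_E\le h$ for $\x\in E$ yields the pointwise bound $|\rho_h(\x)-\mathcal{P}_h\rho_h(\x)|\le h\,|\nabla\rho_h|_E|$.

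Next I would square and integrate over $E$ to obtain
\begin{equation*}
\|\rho_h-\mathcal{P}_h\rho_h\|_{L^2(E)}^2\le h^2\,|\nabla\rho_h|_E|^2\,|E|=h^2\|\nabla\rho_h\|_{L^2(E)}^2,
\end{equation*}
since $\nabla\rho_h$ is constant on $E$. Summing over all $E\in\mathcal{E}_h$ and taking square roots gives the desired inequality with $C_{\rm int}=1$.

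I do not expect any real obstacle here: the argument is a one-line Taylor expansion of an affine function, and neither the acuteness nor the shape-regularity of the mesh plays a role (only $h_E\le h$ is needed). Under hypothesis (H2)(a) one could alternatively absorb the constant into $C_{\rm int}$ if one wished to write the bound in terms of a weighted or scaled seminorm, but the straightforward pointwise estimate above already delivers the cleanest constant.
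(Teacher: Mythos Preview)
Your proof is correct and follows essentially the same route as the paper: write $\rho_h(\x)-\mathcal{P}_h\rho_h(\x)=\nabla\rho_h|_E\cdot(\x-\a_E)$ on each element, bound $|\x-\a_E|\le h$, square, integrate over $E$, and sum. The only cosmetic difference is that you track the constant explicitly as $C_{\rm int}=1$, whereas the paper leaves it as a generic $C$.
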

\begin{proof} Let $\x\in E$ and write
$$
\rho_{h}(\boldsymbol{x})-\mathcal{P}_h(\rho_h)(\x)=\rho_{h}(\x)-\rho_h(\a_E)= \nabla \rho_h|_{E}\cdot(\x-\a_E).
$$
Squaring and integrating over $E$ gives
$$
\|\rho_h-\mathcal{P}_h(\rho_h)\|_{L^2(E)}\le C\, h \|\nabla \rho_h\|_{L^2(E)}
$$
 and  hence summing over $E\in \mathcal{E}_h$ yields the desired result.
\end{proof}
Moreover, let $\tilde\Delta_h$ be defined from $D_h$ to $D_h$ as
\begin{equation}\label{Discrete-Laplacian}
-(\tilde\Delta_h \phi_h, \bar \rho_h)_h=(\nabla \phi_h, \nabla \bar \rho_h)\quad \mbox{ for all } \bar \rho_h\in D_h,
\end{equation}
and let $\phi(h)\in H^2(\Omega)$ be such that
\begin{equation}\label{eq:phi(h)}
\left\{
\begin{array}{rclcl}
-\Delta\phi(h)&=&-\tilde\Delta_h \phi_h&\mbox{ in }&\Omega,
\\
\partial_{\n}\phi(h)&=&0&\mbox{ on }&\partial\Omega.
\end{array}
\right.
\end{equation}
The $H^2(\Omega)$-regularity of $\phi(h)$ is ensured by the convexity assumption stated in $\rm (H1)$. See \cite{Grisvard_1985} for a proof.
\begin{proposition} There exists a constant $C_{\rm Lap}>0$, independent of $h$, such that, for all $\phi_h\in D_h$,
\begin{equation}\label{Error-phi(h)}
\|\nabla(\phi(h)-\phi_h)\|_{L^2(\Omega)}\le C_{\rm Lap} h \|\tilde\Delta_h\phi_h\|_{L^2(\Omega)}.
\end{equation}
\end{proposition}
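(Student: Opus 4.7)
The plan is to proceed by a Strang--type argument: combine Galerkin orthogonality (perturbed by the mass--lumping consistency error) with $H^2$--elliptic regularity.

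First I would write down the two variational characterizations in parallel. On one hand, $\phi(h)$ solves the Neumann problem weakly, so
\begin{equation*}
(\nabla\phi(h),\nabla v) = -(\tilde\Delta_h\phi_h, v) \quad\text{for all } v\in H^1(\Omega).
\end{equation*}
On the other hand, the defining relation \eqref{Discrete-Laplacian} reads
\begin{equation*}
(\nabla\phi_h,\nabla\bar\rho_h) = -(\tilde\Delta_h\phi_h,\bar\rho_h)_h \quad\text{for all } \bar\rho_h\in D_h.
\end{equation*}
Restricting $v = \bar\rho_h \in D_h$ in the first identity and subtracting yields the perturbed orthogonality
\begin{equation*}
(\nabla(\phi(h)-\phi_h),\nabla\bar\rho_h) = (\tilde\Delta_h\phi_h,\bar\rho_h)_h - (\tilde\Delta_h\phi_h,\bar\rho_h)
= \int_\Omega \bigl(\mathcal{I}_h(\tilde\Delta_h\phi_h\,\bar\rho_h) - \tilde\Delta_h\phi_h\,\bar\rho_h\bigr)\dx.
\end{equation*}
I would also note that testing \eqref{Discrete-Laplacian} with $\bar\rho_h\equiv 1$ gives $\int_\Omega \tilde\Delta_h\phi_h\dx = 0$, so the compatibility condition for \eqref{eq:phi(h)} holds and, by convexity of $\Omega$ (hypothesis (H1)), elliptic regularity ensures $\|\nabla^2\phi(h)\|_{L^2(\Omega)}\le C\|\tilde\Delta_h\phi_h\|_{L^2(\Omega)}$.

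Next I would split the error using the nodal interpolant. Writing $e:=\phi(h)-\phi_h$ and $w_h:=\mathcal{I}_h\phi(h)-\phi_h\in D_h$, I decompose
\begin{equation*}
\|\nabla e\|_{L^2(\Omega)}^2 = \bigl(\nabla e,\nabla(\phi(h)-\mathcal{I}_h\phi(h))\bigr) + (\nabla e,\nabla w_h).
\end{equation*}
The first term is purely interpolation error: by \eqref{error-H1-H2-global} and the $H^2$ bound it is controlled by $C_{\rm app}\,h\,\|\nabla e\|_{L^2(\Omega)}\|\nabla^2\phi(h)\|_{L^2(\Omega)} \le C\,h\,\|\nabla e\|_{L^2(\Omega)}\|\tilde\Delta_h\phi_h\|_{L^2(\Omega)}$. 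For the second term I invoke the perturbed orthogonality above with $\bar\rho_h=w_h$ and apply the commutator estimate \eqref{error-L1-L2-H1}:
\begin{equation*}
|(\nabla e,\nabla w_h)| \le C_{\rm com}\,h\,\|\tilde\Delta_h\phi_h\|_{L^2(\Omega)}\|\nabla w_h\|_{L^2(\Omega)}.
\end{equation*}
Finally, bounding $\|\nabla w_h\|_{L^2(\Omega)}\le \|\nabla e\|_{L^2(\Omega)} + \|\nabla(\mathcal{I}_h\phi(h)-\phi(h))\|_{L^2(\Omega)} \le \|\nabla e\|_{L^2(\Omega)} + C h\|\tilde\Delta_h\phi_h\|_{L^2(\Omega)}$ and absorbing the $\|\nabla e\|_{L^2(\Omega)}$ contributions by Young's inequality closes the estimate with the constant $C_{\rm Lap}$ depending only on $C_{\rm app}$, $C_{\rm com}$ and the $H^2$--regularity constant.

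The main subtlety is the second term: one has to recognize that the right--hand side of the perturbed orthogonality is precisely the mass--lumping defect to which \eqref{error-L1-L2-H1} applies (with $\rho_h = \tilde\Delta_h\phi_h$ and $\bar\rho_h = w_h$), thereby gaining a factor of $h$ without requiring any derivative of $\tilde\Delta_h\phi_h$. The rest is routine interpolation plus Aubin--Nitsche--style bookkeeping.
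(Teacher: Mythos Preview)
Your proof is correct and follows essentially the same approach as the paper: derive the perturbed Galerkin orthogonality $(\nabla(\phi(h)-\phi_h),\nabla\bar\rho_h)=(\tilde\Delta_h\phi_h,\bar\rho_h)_h-(\tilde\Delta_h\phi_h,\bar\rho_h)$, control the mass--lumping defect via \eqref{error-L1-L2-H1}, and handle the interpolation error via \eqref{error-H1-H2-global} together with $H^2$ regularity. The only cosmetic difference is that the paper tests directly with $w_h=\mathcal{I}_h\phi(h)-\phi_h$ to bound $\|\nabla w_h\|_{L^2(\Omega)}$ first (and then implicitly invokes the triangle inequality), whereas you decompose $\|\nabla e\|_{L^2(\Omega)}^2$ and bound $\|\nabla w_h\|_{L^2(\Omega)}$ back by $\|\nabla e\|_{L^2(\Omega)}$ at the end; the ingredients and logic are the same.
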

\begin{proof} Testing \eqref{eq:phi(h)} with $\bar\rho_h\in D_h$ yields
$$
(\nabla \phi(h), \nabla \bar\rho_h)=-(\tilde \Delta_h\phi_h, \bar \rho_h).
$$
Combining the above equation and \eqref{Discrete-Laplacian}, we write
$$
(\nabla (\phi(h)-\phi_h), \nabla \bar \rho_h)=(\tilde \Delta_h\phi_h, \bar \rho_h)_h-(\tilde \Delta_h\phi_h, \bar \rho_h)
$$
and hence
$$
(\nabla (\mathcal{I}_h\phi(h)-\phi_h), \nabla \bar \rho_h)=(\tilde \Delta_h\phi_h, \bar \rho_h)_h-(\tilde \Delta_h\phi_h, \bar \rho_h)+(\nabla (\mathcal{I}_h\phi(h)-\phi(h)), \nabla\bar\rho_h).
$$
We now choose $\bar \rho_h=\mathcal{I}_h\phi(h)-\phi_h$ to get
\begin{equation}\label{pro2.8-lab1}
\begin{array}{rcl}
\|\nabla(\mathcal{I}_h\phi(h)-\phi_h)\|^2_{L^2(\Omega)}&=& (\tilde \Delta_h\phi_h, \mathcal{I}_h\phi(h)-\phi_h)_h-(\tilde \Delta_h\phi_h, \mathcal{I}_h\phi(h)-\phi_h)
\\
&&+(\nabla (\mathcal{I}_h\phi(h)-\phi(h)), \nabla (\mathcal{I}_h\phi(h)-\phi_h)).
\end{array}
\end{equation}
By \eqref{error-L1-L2-H1} and \eqref{error-H1-H2-global}, we have
\begin{equation}\label{pro2.8-lab2}
|(\tilde \Delta_h\phi_h, \mathcal{I}_h\phi(h)-\phi_h)_h-(\tilde \Delta_h\phi_h, \mathcal{I}_h\phi(h)-\phi_h)|\le C_{\rm com} h \|\tilde \Delta_h\phi_h\|_{L^2(\Omega)} \|\nabla(\mathcal{I}_h\phi(h)-\phi_h)\|_{L^2(\Omega)}
\end{equation}
and
\begin{equation}\label{pro2.8-lab3}
(\nabla (\mathcal{I}_h\phi(h)-\phi(h)), \nabla (\mathcal{I}_h\phi(h)-\phi_h))\le C_{\rm app} h \|\tilde \Delta_h\phi_h\|_{L^2(\Omega)} \|\nabla (\mathcal{I}_h\phi(h)-\phi(h))\|_{L^2(\Omega)}.
\end{equation}
Consequently, estimate \eqref{Error-phi(h)} is satisfied by inserting \eqref{pro2.8-lab2} and \eqref{pro2.8-lab3} into \eqref{pro2.8-lab1}.
\end{proof}

\begin{corollary} There holds
\begin{equation}\label{inv:Lap-Grad-phi}
\|-\tilde\Delta_h\phi_h\|_{ L^2(\Omega)}\le \frac{C_{\rm inv}}{h}  \|\nabla\phi_h\|_{L^2(\Omega)}
\end{equation}
and
\begin{equation}\label{phi(h)-phi-H1}
\|\nabla\phi(h)\|_{L^2(\Omega)}\le C_{\rm sta} \|\nabla\phi_h\|_{ L^2(\Omega)}.
\end{equation}
\end{corollary}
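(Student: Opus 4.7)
The plan is to prove the two inequalities in sequence, using the first one to derive the second via a triangle inequality together with estimate \eqref{Error-phi(h)} from the previous proposition.

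For \eqref{inv:Lap-Grad-phi}, the natural test function in the definition \eqref{Discrete-Laplacian} of $\tilde\Delta_h$ is $\bar\rho_h = -\tilde\Delta_h\phi_h \in D_h$, which gives
\[
\|-\tilde\Delta_h\phi_h\|_h^2 = (\nabla\phi_h, \nabla(-\tilde\Delta_h\phi_h)).
\]
Applying Cauchy--Schwarz followed by the inverse inequality \eqref{inv:H1-L2-global} to the right-hand side, and using the norm equivalence \eqref{Equivalence-L2} to pass from $\|\cdot\|_h$ to $\|\cdot\|_{L^2(\Omega)}$ on the left-hand side, I would obtain
\[
\tfrac{1}{C_{\rm eq}^2}\|-\tilde\Delta_h\phi_h\|_{L^2(\Omega)}^2 \le \tfrac{C_{\rm inv}}{h}\,\|\nabla\phi_h\|_{L^2(\Omega)}\,\|-\tilde\Delta_h\phi_h\|_{L^2(\Omega)},
\]
and dividing by $\|-\tilde\Delta_h\phi_h\|_{L^2(\Omega)}$ yields \eqref{inv:Lap-Grad-phi} (absorbing $C_{\rm eq}^2$ into the generic constant denoted $C_{\rm inv}$).

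For \eqref{phi(h)-phi-H1}, I would just combine the triangle inequality with estimate \eqref{Error-phi(h)} and then with \eqref{inv:Lap-Grad-phi} just proved:
\[
\|\nabla\phi(h)\|_{L^2(\Omega)} \le \|\nabla(\phi(h)-\phi_h)\|_{L^2(\Omega)} + \|\nabla\phi_h\|_{L^2(\Omega)} \le C_{\rm Lap}\,h\,\|\tilde\Delta_h\phi_h\|_{L^2(\Omega)} + \|\nabla\phi_h\|_{L^2(\Omega)},
\]
and then the factor of $h$ from \eqref{Error-phi(h)} cancels the factor $1/h$ from \eqref{inv:Lap-Grad-phi}, giving the claim with $C_{\rm sta} = 1 + C_{\rm Lap}C_{\rm inv}$.

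There is no real obstacle here; the content is essentially a bookkeeping argument that packages the definition of $\tilde\Delta_h$, the inverse estimate, and the error bound \eqref{Error-phi(h)} into two clean statements. The only subtlety is making sure to use the equivalence \eqref{Equivalence-L2} when passing between $\|\cdot\|_h$ and $\|\cdot\|_{L^2(\Omega)}$, since $\tilde\Delta_h$ is defined through the mass-lumped inner product rather than the standard one.
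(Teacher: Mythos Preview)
Your proposal is correct and matches the paper's own proof essentially line for line: the same test function $\bar\rho_h=-\tilde\Delta_h\phi_h$ in \eqref{Discrete-Laplacian} combined with the inverse inequality for \eqref{inv:Lap-Grad-phi}, and then the triangle inequality together with \eqref{Error-phi(h)} and \eqref{inv:Lap-Grad-phi} for \eqref{phi(h)-phi-H1}, with the final constant $C_{\rm sta}=1+C_{\rm Lap}C_{\rm inv}$. Your explicit mention of the norm equivalence \eqref{Equivalence-L2} is in fact more careful than the paper, which silently writes $\|\tilde\Delta_h\phi_h\|_{L^2(\Omega)}^2$ on the left where the definition naturally produces $\|\tilde\Delta_h\phi_h\|_h^2$.
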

\begin{proof} Select $\bar\rho_h=-\tilde\Delta_h\phi_h$ in \eqref{Discrete-Laplacian} and use  \eqref{inv:H1-L2} to have
$$
\|\tilde\Delta\phi_h\|_{L^2(\Omega)}^2\le \|\nabla\phi_h\|_{L^2(\Omega)} \|\nabla\tilde\Delta_h\phi_h\|_{L^2(\Omega)}\le \frac{C_{\rm inv}}{h} \|\nabla\phi_h\|_{L^2(\Omega)} \|\tilde\Delta_h\phi_h\|_{L^2(\Omega)},
$$
which implies \eqref{inv:Lap-Grad-phi}. Inequality \eqref{phi(h)-phi-H1} is obtained by using  \eqref{Error-phi(h)} and \eqref{inv:Lap-Grad-phi}, so we find that
$$
\|\nabla\phi(h)\|_{L^2(\Omega)}\le \|\nabla\phi_h\|_{L^2(\Omega)}+C_{\rm Lap}h\|\tilde\Delta_h\phi_h\|_{L^2(\Omega)}\le (1+ C_{\rm Lap} C_{\rm inv})\|\nabla\phi_h\|_{L^2(\Omega)}.
$$
\end{proof}
In order to construct a proper sequence of initial approximations we need an interpolation operator that preserves non-negativity and has $L^p$-stability.  Let $\mathcal{SZ}_h$ be the variant of the Scott-Zhang interpolation operator defined in \cite{Girault_Lions_2001}, which satisfies the following.
\begin{proposition} For $p\in[1,\infty]$, $s=0,1$, and $m=0,1$, there exist two constants $C_{\rm sta}, C_{\rm app}>0$, independent of $h$, such that
\begin{equation}\label{est:SZ-stability}
\|\mathcal{SZ}_h\varphi\|_{W^{s,p}(\Omega)}\le C_{\rm sta}\|\varphi\|_{W^{s,p}(\Omega)}\quad\mbox{  for all }\quad \varphi\in W^{s,p}(\Omega).
\end{equation}
and
\begin{equation}\label{error-SZ}
\|\varphi-\mathcal{SZ}_h\varphi\|_{W^{s,p}(\Omega)}\le C_{\rm  app} h^{m+1-s} \|\varphi\|_{W^{m+1,p}(\Omega)}\quad\mbox{ for all }\quad \varphi\in W^{m+1,p}(\Omega).
\end{equation}
Moreover,
\begin{equation}\label{positivity-SZ}
\mbox{ if }\quad \varphi\ge0\quad \mbox{ in }\quad\Omega,\quad \mbox { then }\quad\mathcal{SZ}_h\varphi\ge 0\quad\mbox{ in }\quad \Omega.
\end{equation}
\end{proposition}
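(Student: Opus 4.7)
The plan is to invoke the construction from Girault and Lions verbatim and verify the three properties one by one. Recall that their variant selects, for each node $\boldsymbol{a}\in \mathcal{N}_h$, a fixed simplex $E_{\boldsymbol{a}}\in\mathcal{E}_h$ containing $\boldsymbol{a}$ and defines the nodal value as a weighted local average
$$
\mathcal{SZ}_h\varphi(\boldsymbol{a}) = \int_{E_{\boldsymbol{a}}} \omega_{\boldsymbol{a}}(\boldsymbol{x})\,\varphi(\boldsymbol{x})\,\dx,
$$
where the weight $\omega_{\boldsymbol{a}}\ge 0$ is engineered so that, after the assembly
$\mathcal{SZ}_h\varphi=\sum_{\boldsymbol{a}\in\mathcal{N}_h}\mathcal{SZ}_h\varphi(\boldsymbol{a})\,\varphi_{\boldsymbol{a}}$, the operator reproduces $\mathcal{P}_1$ globally. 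Once the construction is on the table, the three properties decouple.

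Property \eqref{positivity-SZ} is immediate from the construction: the shape functions $\varphi_{\boldsymbol{a}}$ of linear Lagrange elements are nonnegative, and both the kernel $\omega_{\boldsymbol{a}}$ and the test datum $\varphi$ are nonnegative, so $\mathcal{SZ}_h\varphi$ is a nonnegative linear combination of nonnegative functions. There is nothing to estimate.

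The stability \eqref{est:SZ-stability} and the approximation \eqref{error-SZ} follow from a standard scaling argument. First I would pass to the reference simplex $\widehat{E}$, on which the functional $\widehat\varphi\mapsto\int_{\widehat E}\widehat\omega_{\boldsymbol{a}}\widehat\varphi$ is bounded on $W^{s,p}(\widehat E)$, and then push this bound forward to $E$ using shape-regularity and quasi-uniformity from hypothesis (H2); the result is $|\mathcal{SZ}_h\varphi(\boldsymbol{a})|\lesssim h^{-d/p}\|\varphi\|_{L^p(\omega_{\boldsymbol{a}})}$ on the support patch, which, summed over nodes and combined with $\|\varphi_{\boldsymbol{a}}\|_{W^{s,p}(E)}\lesssim h^{d/p-s}$ (bounded overlap of patches), yields \eqref{est:SZ-stability}. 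For \eqref{error-SZ} I would use $\mathcal{P}_m$-invariance together with the Bramble-Hilbert lemma locally on each element patch, then sum over elements.

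The main obstacle is the $\mathcal{P}_1$-invariance of the operator while simultaneously enforcing $\omega_{\boldsymbol{a}}\ge 0$: the classical Scott--Zhang dual basis does change sign, which is precisely why the variant from \cite{Girault_Lions_2001} is needed in place of the textbook construction. Since this verification, together with the weighted bound for the local functional, is already carried out in that reference, I would simply appeal to the proofs therein rather than reproduce them.
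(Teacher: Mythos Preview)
Your approach is consistent with the paper's: the paper does not supply a proof of this proposition at all, it merely introduces $\mathcal{SZ}_h$ as ``the variant of the Scott--Zhang interpolation operator defined in \cite{Girault_Lions_2001}'' and then states the proposition without argument. Your sketch of the construction (nonnegative local averaging weights, scaling, Bramble--Hilbert) and your closing remark that one should ``appeal to the proofs therein rather than reproduce them'' amount to the same thing, only with more expository detail than the paper provides.
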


Henceforth $C$ denotes a generic constant whose value may change at each occurrence. This constant may depend on the data problem and the constants $C_{\rm neg}$, $C_{\rm inv}$, $C_{\rm eq}$,  $C_{\rm app}$, $C_{\rm Lip}$,  $C_{\rm com }$, and $C_{\rm Lap }$.
\section{Statement of the main result}
Let $\rho^0\in L^\infty(\Omega)$ be nonnegative and consider $\rho^0_h=\mathcal{SZ}_h\rho^0$. From \eqref{est:SZ-stability} and \eqref{positivity-SZ},  we see that
\begin{equation}\label{Initial-Positivity}
\rho^0_h\ge0\quad \mbox{ in }\quad\Omega
\end{equation}
and
\begin{equation}\label{initial-boundness}
\| \rho_{h}^0\|_{L^p(\Omega)}\le C_{\rm sta} \|\rho^0\|_{L^p(\Omega)}\quad\mbox{ for all }\quad p\in[1,\infty].
\end{equation}
Moreover, a regularization argument together with \eqref{error-SZ} provides
\begin{equation}\label{initial-convergences}
\rho^0_h\to \rho^0\quad\mbox{ in }\quad L^p(\Omega)\mbox{-strongly as}\quad h\to0.
\end{equation}

Let $k=\frac{T}{N}$ with $N\in\mathds{N}$ and consider $\{t_n\}_{n=0}^N $ with $t_n=k\,n$.
Given $\rho^n_h\in D_h$, compute $\rho^{n+1}_h\in D_h$ satisfying
\begin{equation}\label{Scheme}
(\delta_t\rho^{n+1}_h,\bar\rho_h)_h+ h^\gamma (\nabla\rho^{n+1}_h,\nabla\bar\rho_h)+(\nabla\mathcal{I}_h  A([\rho^{n+1}_h]_T),\nabla\bar\rho_h)-(\rho^{n+1}_h\nabla \mathcal{I}_h( K*[\rho^n_h]_T),\nabla\bar\rho_h)=0,
\end{equation}
where $0<\gamma<1$  and $[\cdot]_T:D_h\to D_h$ is a nodal truncating operator defined as
$$
[\bar\rho_h(\a)]_T=\left\{
\begin{array}{rcl}
0&\mbox{ if }&\bar\rho_h(\a)\in(-\infty, 0),
\\
\bar\rho_h(\a)&\mbox{ if }& \bar \rho_h(\a)\in [0,B_{L^\infty}],
\\
B_{L^\infty}&\mbox{ if }&\bar \rho_h(\a)\in(B_{L^\infty}, +\infty),
\end{array}
\right.
$$
with $\a\in\mathcal{N}_h$ and $B_{L^\infty}= e^{ T\|\Delta K\|_{L^{\infty}(\mathds{R}^d)}\|\rho_{0}\|_{L^1(\Omega)}}\|\rho_0\|_{L^\infty(\Omega)}$. By an abuse of notation, the convolution $K*[\rho_{h,k}]_T$ must be understood for $[\rho_{h,k}]_T \chi_\Omega$, where $\chi_\Omega$ is the characteristic function. Moreover, the definition of $B_{L^\infty}$ will be explained later on.

Due to the embedding $W^{2,\infty}(\R^d)$ into $C^0(\R^d)$, the convolution $K*[\rho_{h,k}]_T$ belongs to $ C^0(\R^d)$; therefore, we are allowed to consider $\mathcal{I}_h ((K*[\rho_{h,k}]_T)|_{\Omega})$ that we write $\mathcal{I}_h (K*[\rho_{h,k}]_T)$  to simplify notation. Further, we have introduced $\delta_t\rho^{n+1}_h=\frac{\rho^{n+1}_h-\rho^n_h}{k}$.

For future references, note that $\mathcal{I}_hA([\rho_h^{n+1}]_T)=\mathcal{I}_h A_T(\rho^{n+1}_h)$, where
$$
A_T(s)=\left\{\begin{array}{ccl} 0 &\mbox{ if }& s\in(-\infty,0), \\ A(s)&\mbox{ if }& s\in[0, B_{L^\infty}],\\ A(B_{L^\infty})&\mbox{ if } & s\in(B_{L^\infty},+\infty). \end{array}\right.
$$

A weak solution for \eqref{PDE} will be understood in the following sense \cite{Bertozzi_Slepcev_2010}.
\begin{definition}\label{Weak_Sol} A function $\rho : Q\to [0, \infty)$ is a weak solution to \eqref{PDE} with \eqref{BC} and \eqref{IC} if
$$
\rho \in L^\infty(Q ),  \quad  A(\rho) \in L^2(0, T, H^1(\Omega)), \quad \partial_t\rho\in L^2(0,T; (H^1(\Omega))'),
$$
and
\begin{equation}\label{PDE-L2H-1}
\left\{
\begin{array}{rcccl}
\partial_t\rho -\Delta A(\rho)+\nabla\cdot(\rho(\nabla K*\rho))&=&0&\mbox{ in }& L^2(0,T; (H^1(\Omega))'),
\\
\rho(0)&=&\rho_0&\mbox{ in }& (H^1(\Omega))'.
\end{array}
\right.
\end{equation}
\end{definition}

To establish convergence of the discrete solutions constructed via scheme \eqref{Scheme} toward the weak solution to \eqref{PDE}, we need to assume that
\begin{enumerate}
\item[(A1)] $A$ is  $C^1([0,\infty); [0,\infty))$  with $A'>0$  on $(0, \infty)$ and $A(0) = 0$,
\end{enumerate}
and
\begin{enumerate}
\item[(K1)]  $K \in W^{2,\infty}(\R^d)$ is  such that $K(\x)=r(|\x|)$ with $r$ being nonincreasing.
\end{enumerate}


Let us define $\rho_{h,k}, \rho_{h,k}^-, \rho_{h,k}^+ :[0,T]\to D_h$ such that
$$
\rho_{h,k}=\frac{t-t_{n+1}}{k}\rho^{n+1}_h+\frac{t_n-t}{k}\rho^n_h, \quad t\in[t_n, t_{n+1}],
$$
$$
\rho_{h,k}^-=\rho_h^n, \quad \rho_{h,k}^+=\rho^{n+1}_h, \quad t\in(t_n,t_{n+1}].
$$

Our main result is summarized in the following theorem.
\begin{theorem}\label{Thm:Main} Suppose that $\rm (A1)$, $\rm(K1)$, and $\rm(H1)$-$\rm(H3)$ are satisfied. Then
\begin{enumerate}
\item there is a unique solution, $\rho_h^{n+1}$, to scheme \eqref{Scheme} provided that
\begin{equation}\label{Restriction_II}
Ck (1 + \frac{1}{h}) \|K\|_{W^{2,\infty}(\R^d)} \|\rho_h^n\|_{L^1(\Omega)}\le\frac{1}{2}.
\end{equation}
\item $\rho^{n+1}_h\ge 0$
provided that
\begin{equation}\label{Restriction_III}
C h^{1-\gamma} \|K\|_{W^{2,\infty}(\R^d)} \|\rho_h^n\|_{L^1(\Omega)}< C_{\rm neg}.
\end{equation}
\item and
$$
\int_\Omega \rho^{n+1}_h(\x)\dx=\int_\Omega \rho^{n}_h(\x)\dx.
$$
\end{enumerate}

Thus, the sequences of approximate solutions $\{\rho_{h,k}\}_{h,k>0}$ and $\{\rho_{h,k}^\pm\}_{h,k>0}$ constructed via scheme~\eqref{Scheme}
\begin{enumerate}
\item are well-defined if
\begin{equation}\label{Restriction_II-global}
Ck (1 + \frac{1}{h}) \|K\|_{W^{2,\infty}(\R^d)} \|\rho_h^0\|_{L^1(\Omega)}\le\frac{1}{2}
\end{equation}
\item satisfy
$$
\rho_{h,k}, \rho_{h,k}^\pm\ge0 \quad\mbox{ in }\quad  Q,
$$
if
\begin{equation}\label{Restriction_III-global}
C h^{1-\gamma} \|\nabla K\|_{W^{2,\infty}(\R^d)} \|\rho_h^0\|_{L^1(\Omega)}< C_{\rm neg}.
\end{equation}
\item and
$$
\int_\Omega \rho_{h,k}(t,\x)\dx=\int_\Omega \rho_{h,k}^\pm(t,\x)\dx=\int_\Omega \rho_{0h}(\x)\dx\quad\mbox{ for all }\quad t\in[0,T].
$$
\end{enumerate}
Furthermore, the sequences of approximate solutions $\{\rho_{h,k}\}_{h,k>0}$ and $\{\rho_{h,k}^\pm\}_{h,k>0}$ converge toward the unique weak solution $\rho$ of \eqref{PDE}, as $(h,k)\to (0,0)$, in the sense that
$$
[\rho_{h,k}]_T, [\rho_{h,k}^\pm]_T\to \rho \mbox{ in } L^2(0,T; L^p(\Omega))\mbox{-strongly }
$$
and
$$
A([\rho_{h,k}^+]_T)\to A(\rho) \mbox{ in } L^2(0,T; L^p(\Omega))\mbox{-strongly and in } L^2(0,T; H^1(\Omega))\mbox{-weakly}
$$
with $1<p<\infty$.
\end{theorem}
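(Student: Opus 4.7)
The plan is to split Theorem~\ref{Thm:Main} along its three natural layers—well-posedness at a single time step, propagation to the whole sequences $\{\rho_{h,k}\}$ and $\{\rho_{h,k}^\pm\}$, and convergence to a weak solution of \eqref{PDE}-\eqref{IC}—so that each layer can call on a different group of the tools assembled in Section~2.

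\textbf{One-step analysis (well-posedness, mass conservation, nonnegativity).} At a given step, \eqref{Scheme} is a nonlinear equation for $\rho_h^{n+1}\in D_h$ in which only the diffusion term $\mathcal{I}_h A_T(\rho_h^{n+1})$ is nonlinear, since the transport velocity $\nabla\mathcal{I}_h(K*[\rho_h^n]_T)$ is frozen. I would obtain existence from Brouwer's fixed point theorem applied to the map $\bar\rho_h\mapsto\rho_h^{n+1}$ where $A_T$ is evaluated at $\bar\rho_h$, the coercivity of the linearized form being furnished by the lumped mass term and the $h^\gamma$-stabilizer. Uniqueness follows by subtracting two solutions, testing against their difference, discarding the $A_T$-contribution via the monotonicity inequality \eqref{Coercitivity}, and absorbing the transport term through \eqref{inv:H1-L2-global}; this is exactly what \eqref{Restriction_II} is built for. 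Mass conservation is immediate on choosing $\bar\rho_h\equiv 1$ since every gradient term vanishes. For nonnegativity, I would test \eqref{Scheme} against $\mathcal{I}_h(\rho_h^{n+1})^-$: the truncation $A_T$ kills the $A_T$-contribution on the negative set, mass lumping produces $-k^{-1}\|(\rho_h^{n+1})^-\|_h^2$ (using $\rho_h^n\ge0$), the acute-mesh estimate \eqref{off-diagonal} gives a coercive contribution of order $h^{\gamma+d-2}\|(\rho_h^{n+1})^-\|^2$, and the transport term, bounded via \eqref{inv:H1-L2-global} and \eqref{sta_Ih_H1-global} by $C h^{-1}\|K\|_{W^{2,\infty}(\R^d)}\|\rho_h^n\|_{L^1(\Omega)}\|(\rho_h^{n+1})^-\|^2$, is beaten by that coercive contribution precisely under \eqref{Restriction_III}. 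Because $\|\rho_h^n\|_{L^1(\Omega)}=\|\rho_h^0\|_{L^1(\Omega)}$ by mass conservation, the one-step restrictions propagate verbatim to \eqref{Restriction_II-global} and \eqref{Restriction_III-global}, yielding the global statements.

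\textbf{Uniform a priori estimates.} Testing \eqref{Scheme} with $\mathcal{I}_h A_T(\rho_h^{n+1})$ and using \eqref{Coercitivity} controls the nonlinear diffusion and produces $\|\nabla\mathcal{I}_h A_T(\rho_h^{n+1})\|_{L^2(\Omega)}^2$. Bounding the transport term by the $L^\infty$-control on $\nabla\mathcal{I}_h(K*[\rho_h^n]_T)$ obtained from $K\in W^{2,\infty}(\R^d)$, mass conservation, and \eqref{sta_Ih_H1-global}, one gets uniform $L^\infty(0,T;L^2(\Omega))$ bounds on $[\rho_{h,k}^+]_T$ together with $L^2(0,T;H^1(\Omega))$ bounds on $A([\rho_{h,k}^+]_T)$ and on $h^{\gamma/2}\nabla\rho_{h,k}^+$. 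A duality argument based on \eqref{inv:H-1-L2-global} and the commutator bounds \eqref{error-L1-H-1-Linf}-\eqref{error-L1-L2-H1} then delivers a uniform $L^2(0,T;(H^1(\Omega))')$ estimate on $\partial_t\rho_{h,k}$.

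\textbf{Compactness.} This is the step I expect to be the hardest, because no discrete maximum principle is at our disposal and a classical Aubin--Lions argument does not fit the truncation operator. Following the strategy sketched in the introduction, I would invoke the Riesz--Fr\'echet--Kolmogorov-by-perturbation criterion of \cite{Arzela_GG}: spatial equicontinuity of $[\rho_{h,k}^\pm]_T$ follows from the uniform $H^1$-bound, temporal equicontinuity of $\rho_{h,k}$ in $L^2(0,T;(H^1(\Omega))')$ follows from the bound on $\partial_t\rho_{h,k}$, and the gap between $\rho_{h,k}$ and $[\rho_{h,k}]_T$ is treated as a vanishing perturbation by combining the commutator estimates \eqref{error-L1-H-1-Linf}-\eqref{error-L1-L2-H1} with the $L^\infty$-bound provided by the truncation itself. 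This yields strong convergence of $[\rho_{h,k}^\pm]_T$ in $L^2(0,T;L^p(\Omega))$ for any $1<p<\infty$, hence strong convergence of $A([\rho_{h,k}^+]_T)$ by continuity of $A_T$, together with weak convergence of $\nabla A([\rho_{h,k}^+]_T)$ in $L^2(Q)$.

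\textbf{Passage to the limit.} Finally I would pass to the limit in each term of \eqref{Scheme}: strong$\times$weak duality takes care of the transport product, the $\mathcal{I}_h$-commutator terms disappear via \eqref{error-Linf-W1inf_W2inf-global} and \eqref{error-H1-H2-global}, and the $h^\gamma$-stabilizer vanishes because $\gamma>0$. To make the truncation $[\cdot]_T$ inactive in the limit, I would reproduce, at the continuous level, the $L^\infty$-a priori bound of \cite{Bertozzi_Slepcev_2010} for any weak limit $\rho$; the explicit value of $B_{L^\infty}$ in the statement is tailored precisely to that bound, so that $\|\rho\|_{L^\infty(Q)}\le B_{L^\infty}$ and the truncation is transparent. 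The limit is then a weak solution in the sense of Definition~\ref{Weak_Sol}; uniqueness (already available from \cite{Bertozzi_Slepcev_2010}) upgrades the convergence from subsequences to the whole sequence.
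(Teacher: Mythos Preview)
Your outline tracks the paper's architecture (Sections~4--7) quite closely, but two of the steps, as you describe them, would not go through.

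\textbf{Uniqueness.} You propose to subtract two solutions and test with their difference $\rho_h=\rho_h^1-\rho_h^2$, ``discarding the $A_T$-contribution via the monotonicity inequality \eqref{Coercitivity}''. But \eqref{Coercitivity} is a statement about $\nabla\rho_h\cdot\nabla\mathcal{I}_h f(\rho_h)$ for a \emph{single} $\rho_h$; it does not give a sign for
\[
\big(\nabla\mathcal{I}_h(A_T(\rho_h^1)-A_T(\rho_h^2)),\,\nabla(\rho_h^1-\rho_h^2)\big),
\]
and in fact this quantity can be negative (take $A(s)=s^2$ and nodal values $\rho_h^1(\a_i)=1.1$, $\rho_h^2(\a_i)=0$, $\rho_h^1(\a_0)=10$, $\rho_h^2(\a_0)=9$). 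The paper avoids this by testing with the discrete inverse-Laplacian potential $\phi_h$ defined by $(\nabla\phi_h,\nabla\bar\rho_h)=(\rho_h,\bar\rho_h)_h$; then the $A_T$-term becomes the \emph{nodal} sum $-k\sum_\a A'(\xi_\a)([\rho_h^1(\a)]_T-[\rho_h^2(\a)]_T)\rho_h(\a)\int_\Omega\varphi_\a\le 0$, where monotonicity applies pointwise. Handling the transport term against $\nabla\phi_h$ is precisely what brings in the machinery of $\tilde\Delta_h$, $\phi(h)$, \eqref{Error-phi(h)}, \eqref{inv:Lap-Grad-phi}, and is the origin of the $1/h$ in \eqref{Restriction_II}. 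Your route, if it worked, would need only \eqref{Restriction_I}; the stronger condition in the statement is a signal that the direct test does not close.

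\textbf{Compactness.} You write that ``spatial equicontinuity of $[\rho_{h,k}^\pm]_T$ follows from the uniform $H^1$-bound''. There is no such bound: the $L^2(0,T;H^1)$ control is on $\mathcal{I}_h A([\rho_{h,k}^+]_T)$ and on $h^{\gamma/2}\rho_{h,k}^+$, neither of which yields equicontinuity of $[\rho_{h,k}^+]_T$ uniformly in $h$. The paper bridges this gap in Lemmas~6.1--6.2 by constructing a modulus $F_1$ with $\|[\rho_{h,k}^+]_T(t+\delta)-[\rho_{h,k}^+]_T(t)\|_{L^2}\le F_1(\|\mathcal{I}_hA([\rho_{h,k}^+]_T(t+\delta))-\mathcal{I}_hA([\rho_{h,k}^+]_T(t))\|_{L^2})$ (splitting nodes according to whether $[\rho]_T$ is below a threshold $\eta$ and using $A'>0$ on $(0,\infty)$), and then running a contradiction argument to turn a bound on $([\rho]_T(t+\delta)-[\rho]_T(t),A([\rho]_T(t+\delta))-A([\rho]_T(t)))_h$ into $L^2$-equicontinuity. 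Time translates are controlled by summing \eqref{Scheme} over $r$ steps and using a discrete Fubini (Lemma~6.3). Your invocation of the criterion from \cite{Arzela_GG} is correct in spirit, but the ``perturbation'' is not the gap between $\rho_{h,k}$ and $[\rho_{h,k}]_T$; it is the passage through the piecewise-constant interpolant $\mathcal{P}_h$ and the mismatch between $h$-dependent and $h$-independent translate estimates.

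Two smaller points: the energy estimate in the paper is obtained by testing with $\rho_h^{n+1}$ (not with $\mathcal{I}_h A_T(\rho_h^{n+1})$), which gives the $L^\infty_tL^2_x$ bound directly and extracts $\|\nabla\mathcal{I}_h A_T(\rho_h^{n+1})\|_{L^2}^2$ via \eqref{Coercitivity}; and the nonnegativity proof uses both \eqref{off-diagonal} and \eqref{diagonal} together with the splitting $\rho_h^{n+1}=\rho_h^{\max}+\rho_h^{\min}$ and the sign condition \eqref{Kxn}, rather than a single coercivity-vs-transport balance. Your passage-to-the-limit and $B_{L^\infty}$ paragraphs match the paper.
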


From now on we assume that assumptions $\rm (A1)$, $\rm(K1)$, and $\rm(H1)$-$\rm(H3)$ hold  without further comment on the statement of the results.

\section{Existence and uniqueness of discrete solutions }
In this section we prove the unique solvability of  scheme \eqref{Scheme}. To simplify notation we suppress the superscript in $\rho^{n+1}_h$ since there will be no ambiguity in setting $\rho_h=\rho_h^{n+1}$.

Before proceeding, we need an auxiliary result concerning the sign of $\nabla(K*[\rho^n_h]_T)\cdot \boldsymbol{n}$ on $\partial\Omega$.
\begin{lemma} It follows that
\begin{equation}\label{Kxn}
\nabla (K*[\rho^n_h]_T)\cdot \n\le 0\quad\mbox{ on }\quad\partial\Omega.
\end{equation}
\end{lemma}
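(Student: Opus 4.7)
The plan is to exploit the three structural ingredients: the radial, nonincreasing profile of $K$, the nonnegativity of $[\rho_h^n]_T$, and the convexity of $\Omega$ built into hypothesis $\rm(H1)$. Since $[\rho_h^n]_T \ge 0$ pointwise in $\Omega$ (it is a continuous piecewise linear function whose nodal values lie in $[0,B_{L^\infty}]$), differentiation under the integral sign (justified by $K\in W^{2,\infty}(\R^d)$) gives, for $\x\in\partial\Omega$,
\begin{equation*}
\nabla(K*[\rho_h^n]_T)(\x)\cdot\n(\x)=\int_\Omega \nabla K(\x-\y)\cdot\n(\x)\,[\rho_h^n]_T(\y)\,{\rm d}\y.
\end{equation*}

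Next I would use the radial structure $K(\boldsymbol{z})=r(|\boldsymbol{z}|)$ to write, for $\boldsymbol{z}\neq 0$,
\begin{equation*}
\nabla K(\boldsymbol{z})=r'(|\boldsymbol{z}|)\,\frac{\boldsymbol{z}}{|\boldsymbol{z}|},
\end{equation*}
so that the integrand above equals
\begin{equation*}
r'(|\x-\y|)\,\frac{(\x-\y)\cdot\n(\x)}{|\x-\y|}\,[\rho_h^n]_T(\y).
\end{equation*}
Since $r$ is nonincreasing, $r'\le 0$ almost everywhere; since $[\rho_h^n]_T\ge 0$, it only remains to check that $(\x-\y)\cdot\n(\x)\ge 0$ for every $\x\in\partial\Omega$ and every $\y\in\Omega$.

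This last inequality is a standard consequence of the convexity of $\Omega$: the supporting hyperplane at a boundary point $\x$ of a convex set leaves $\Omega$ (and in particular $\y$) in the half-space opposite to $\n(\x)$, so $(\y-\x)\cdot\n(\x)\le 0$. Combining these three sign properties yields the nonpositivity of the integrand, and hence $\nabla(K*[\rho_h^n]_T)\cdot\n\le 0$ on $\partial\Omega$. The only subtle point is that $r$ need not be classically differentiable everywhere; however $K\in W^{2,\infty}(\R^d)$ guarantees that $\nabla K$ is defined almost everywhere and that the above manipulations remain valid for almost every $\y$, which is enough for the integral inequality. No serious obstacle is expected; the lemma is essentially a rewriting of the convexity of $\Omega$ together with the radial monotonicity built into $\rm(K1)$.
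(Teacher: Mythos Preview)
Your argument is correct and rests on the same three ingredients as the paper's proof: the nonnegativity of $[\rho^n_h]_T$, the radial nonincreasing structure of $K$ from $\rm(K1)$, and the supporting-hyperplane property of the convex domain $\Omega$ from $\rm(H1)$. The only noteworthy difference is technical: instead of differentiating under the integral and invoking the chain-rule formula $\nabla K(\boldsymbol{z})=r'(|\boldsymbol{z}|)\,\boldsymbol{z}/|\boldsymbol{z}|$, the paper computes the normal derivative as the one-sided limit
\[
\partial_{\n}(K*[\rho^n_h]_T)(\x)=\lim_{s\to 0^+}\frac{(K*[\rho^n_h]_T)(\x+s\n)-(K*[\rho^n_h]_T)(\x)}{s},
\]
and observes directly that $|\x+s\n-\y|\ge|\x-\y|$ for $\y\in\Omega$ (by convexity) forces $r(|\x+s\n-\y|)-r(|\x-\y|)\le 0$. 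This sidesteps entirely the regularity discussion of $r'$ that you flag at the end; your version is slightly more explicit about the integrand's sign, while the paper's difference-quotient version is a little cleaner in that it only uses monotonicity of $r$ and never needs $r'$ to exist.
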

\begin{proof} Let $\x\in\partial\Omega$ be such that $\n(\x)$ is well-defined at $\x\in\partial\Omega$ as being the outward unit normal vector and let $s>0$.  Write
$$
\begin{array}{rcl}
K*[\rho_h^n]_T(\x+  s \n)-K*[\rho_h^n]_T(\x)&=&
\displaystyle
\int_\Omega (K(\x+  s \n-\y)-K (\x-\y))[\rho^n_h]_T(\y)d\y
\\[2.0ex]
&=&
\displaystyle
\int_\Omega (r(|\x+  s \n-\y|)-r(|\x-\y|))[\rho^n_h]_T(\y)d\y.
\end{array}
$$
In virtue of the decreasing property from $\rm(K1)$ and the convexity from $\rm (H1)$, we find that $r(|\x+  s \n-\y|)-r(|\x-\y|)\le 0$ since $|\x+  s \n-\y|\ge |\x-\y|$ and $[\rho^n_h]_T\ge 0$. Then
$$
\partial_\n (K*[\rho^n_h]_T)(\x)=\lim_{s\to 0^+}\frac{(K*[\rho^n_h]_T)(\x+  s \n)-(K*[\rho^n_h]_T)(\x)}{s}\le 0.
$$
\end{proof}

The next lemma shows that scheme \eqref{Scheme} has at least one solution.  In doing so, we make use of Brouwer's theorem.

\begin{lemma}[Existence] Let $\rho^n_h\in L^1(\Omega)$ be such that $\rho_h^n\ge0$ in $\Omega$.  Then scheme \eqref{Scheme} has at least one solution provided that
\begin{equation}\label{Restriction_I}
C k \|K\|_{W^{2,\infty}(\R^d)} \|\rho_h^n\|_{L^1(\Omega)}\le\frac{1}{2}.
\end{equation}
for a certain constant $C>0$ independent of $h$.
\end{lemma}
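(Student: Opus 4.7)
The plan is to invoke Brouwer's fixed point theorem on $(D_h,\|\cdot\|_h)$ applied to the continuous map $\mathcal{F}:D_h\to D_h$ defined by setting $(\mathcal{F}(\rho_h),\bar\rho_h)_h$ equal to the left-hand side of \eqref{Scheme} with $\rho_h^{n+1}$ replaced by $\rho_h$. Continuity of $\mathcal{F}$ is immediate since $[\cdot]_T$ and $\mathcal{I}_h$ are continuous on the finite-dimensional space $D_h$. By the standard corollary of Brouwer's theorem, it suffices to produce $R>0$ such that $(\mathcal{F}(\rho_h),\rho_h)_h>0$ whenever $\|\rho_h\|_h=R$; this delivers a zero of $\mathcal{F}$ in the ball of radius $R$, i.e., a solution to \eqref{Scheme}.

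Testing with $\bar\rho_h=\rho_h$, the discrete time-derivative yields $\tfrac{1}{k}\|\rho_h\|_h^2-\tfrac{1}{k}(\rho_h^n,\rho_h)_h$, the stabilization term $h^\gamma\|\nabla\rho_h\|_{L^2(\Omega)}^2$ is non-negative, and the diffusion term is non-negative as a consequence of \eqref{Coercitivity} applied to the Lipschitz non-decreasing truncation $A_T$, namely
$$
(\nabla\mathcal{I}_h A_T(\rho_h),\nabla\rho_h)\geq \frac{1}{C_{\rm Lip}}\|\nabla\mathcal{I}_h A_T(\rho_h)\|_{L^2(\Omega)}^2\geq 0.
$$
The main obstacle is the convective term $-(\rho_h\nabla\mathcal{I}_h(K*[\rho_h^n]_T),\nabla\rho_h)$: estimating it crudely by $\|\nabla\mathcal{I}_h(K*[\rho_h^n]_T)\|_{L^\infty}\|\rho_h\|_{L^2}\|\nabla\rho_h\|_{L^2}$ and absorbing into $h^\gamma\|\nabla\rho_h\|^2$ would cost a spurious factor $1/h^\gamma$ incompatible with \eqref{Restriction_I}.

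To circumvent this, I set $\psi=K*[\rho_h^n]_T$ and split $\mathcal{I}_h\psi=\psi+(\mathcal{I}_h\psi-\psi)$. Using $\rho_h\nabla\rho_h=\tfrac{1}{2}\nabla\rho_h^2$, integration by parts on the first piece gives
$$
-(\rho_h\nabla\psi,\nabla\rho_h)=\tfrac{1}{2}\int_\Omega\rho_h^2\,\Delta\psi\,dx-\tfrac{1}{2}\int_{\partial\Omega}\rho_h^2\,\partial_{\n}\psi\,d\sigma.
$$
The boundary contribution is non-negative by \eqref{Kxn}, while the bulk term is bounded below by $-\tfrac{1}{2}\|\Delta K\|_{L^\infty(\R^d)}\|\rho_h^n\|_{L^1(\Omega)}\|\rho_h\|_{L^2(\Omega)}^2$, which is the correct structural bound. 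For the interpolation defect, \eqref{error-Linf-W1inf_W2inf-global} yields $\|\nabla(\mathcal{I}_h\psi-\psi)\|_{L^\infty(\Omega)}\leq Ch\|K\|_{W^{2,\infty}(\R^d)}\|\rho_h^n\|_{L^1(\Omega)}$; a Cauchy--Schwarz/Young pairing absorbing a fraction of $h^\gamma\|\nabla\rho_h\|^2$ leaves a residual of order $h^{2-\gamma}$, which is harmless for $h$ small since $\gamma<1$. Combining everything with the norm equivalence $\|\rho_h\|_{L^2}\leq C_{\rm eq}\|\rho_h\|_h$ yields
$$
(\mathcal{F}(\rho_h),\rho_h)_h\geq \frac{1}{k}\|\rho_h\|_h^2\bigl(1-Ck\|K\|_{W^{2,\infty}(\R^d)}\|\rho_h^n\|_{L^1(\Omega)}\bigr)-\frac{1}{k}\|\rho_h^n\|_h\|\rho_h\|_h.
$$
Under \eqref{Restriction_I} the first prefactor is at least $1/(2k)$, so taking $R=4\|\rho_h^n\|_h$ makes the right-hand side strictly positive on the sphere $\|\rho_h\|_h=R$, and Brouwer concludes.
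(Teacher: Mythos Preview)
Your argument is correct and takes a genuinely different route from the paper's. The paper freezes the nonlinearity: it defines $\Phi(\tilde\rho_h)=\rho_h$ as the solution of the \emph{linear} problem obtained from \eqref{Scheme} by replacing $A_T(\rho_h^{n+1})$ with $A_T(\tilde\rho_h)$, shows that $\Phi$ maps a ball (in a norm $\|\cdot\|_*=\|\cdot\|_h^2+kh^\gamma\|\nabla\cdot\|^2$) into itself, and then proves continuity of $\Phi$ separately before invoking Brouwer. You instead work directly with the nonlinear residual $\mathcal F$ and use the acute--angle form of Brouwer. Your route is shorter: you avoid both the well-posedness of the auxiliary linear problem and the continuity argument for $\Phi$, and you exploit the monotonicity of $A_T$ through \eqref{Coercitivity} to dispose of the diffusion term by a sign, whereas the paper, having frozen $\tilde\rho_h$, can only control $(\nabla\mathcal I_hA_T(\tilde\rho_h),\nabla\rho_h)$ via the $L^\infty$-bound $A(B_{L^\infty})$ and an inverse inequality, producing an $h$-dependent right-hand side of order $k/h^{1+\gamma}$. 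The treatment of the convective term (splitting $\mathcal I_h\psi=\psi+(\mathcal I_h\psi-\psi)$, integrating by parts, and using \eqref{Kxn}) is the same in both approaches.

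One small point: your handling of the interpolation defect via Young's inequality actually leaves a residual $Ch^{2-\gamma}\bigl(\|K\|_{W^{2,\infty}}\|\rho_h^n\|_{L^1}\bigr)^2\|\rho_h\|_{L^2}^2$, which is \emph{quadratic} in $\|K\|_{W^{2,\infty}}\|\rho_h^n\|_{L^1}$ and so does not fold into the prefactor $1-Ck\|K\|_{W^{2,\infty}}\|\rho_h^n\|_{L^1}$ under \eqref{Restriction_I} alone without an additional smallness assumption on $h$. The clean fix---this is what the paper does in \eqref{lm4.2-lab3}---is to use the inverse inequality $h\|\nabla\rho_h\|_{L^2}\le C_{\rm inv}\|\rho_h\|_{L^2}$ instead of Young, giving directly
\[
\bigl|(\rho_h\nabla(\mathcal I_h\psi-\psi),\nabla\rho_h)\bigr|\le C\,\|K\|_{W^{2,\infty}(\R^d)}\|\rho_h^n\|_{L^1(\Omega)}\|\rho_h\|_{L^2(\Omega)}^2,
\]
which slots into your final display without any caveat on $h$.
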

\begin{proof} 
Let $\Phi: D_h\to D_h$ be defined  by  $\Phi(\tilde \rho_h)=\rho_h$ such that
\begin{equation}\label{eq:Phi}
\begin{array}{rcl}
(\rho_h-\rho_h^n,\bar\rho_h)_h+k h^\gamma(\nabla \rho_h,\nabla\bar\rho_h)+k(\nabla\mathcal{I}_h  A_T( \tilde\rho_h),\nabla\bar\rho_h)-k(\rho_h\nabla \mathcal{I}_h(K*[\rho^n_h]_T),\nabla\bar\rho_h)=0.
\end{array}
\end{equation}
Pick $\bar\rho_h=\rho_h$ to get
\begin{equation}\label{lm4.2-lab1}
\begin{array}{rcl}
\|\rho_h\|^2_h+k  h^\gamma\|\nabla \rho_h\|^2_{L^2(\Omega)}= (\rho_h^n,\rho_h)_h -k (\nabla\mathcal{I}_h A_T(\tilde\rho_h), \nabla \rho_h)+ k (\rho_h \nabla\mathcal{I}_h (K*[\rho_h^n]_T), \nabla \rho_h).
\end{array}
\end{equation}
Cauchy-Schwarz' and Young's inequalities give
\begin{equation}\label{lm4.1-lab1-bis}
(\rho_h^n,\rho_h)_h\le 2 \|\rho_h^n\|^2_h+\frac{1}{4}\|\rho_h\|^2_h.
\end{equation}
The second term on the right-hand of \eqref{lm4.2-lab1} can be estimated on noting \eqref{inv:H1-L2-global} as
\begin{equation}\label{lm4.2-lab1-bis}
\begin{array}{rcl}
k (\nabla\mathcal{I}_h  A_T(\tilde\rho_h), \nabla \rho_h)&\le&
\displaystyle
k\|\nabla\mathcal{I}_h A_T(\tilde\rho_h)\|_{L^2(\Omega)} \|\nabla \rho_h\|_{L^2(\Omega)}
\\
&\le&\displaystyle
\frac{1}{2} \frac{k}{h^{1+\gamma}} \|\mathcal{I}_h  A_T(\tilde\rho_h)\|^2_{L^2(\Omega)}+ \frac{1}{2}k h^\gamma \|\nabla \rho_h\|^2_{L^2(\Omega)}
\\
&\le&
\displaystyle
\frac{1}{2} |\Omega| \frac{k}{h^{1+\gamma}}  A(B_{L^\infty})+ \frac{1}{2} k  h^\gamma \|\nabla \rho_h\|^2_{L^2(\Omega)}.
\end{array}
\end{equation}
The third term on the right-hand side of \eqref{lm4.2-lab1} can be rewritten as
\begin{equation}\label{lm4.2-lab1-tris}
k (\rho_h \nabla \mathcal{I}_h (K*[\rho_h^n]_T), \nabla \rho_h)= k (\rho_h \nabla K*[\rho_h^n]_T, \nabla \rho_h) +k (\rho_h \nabla(\mathcal{I}_h-\mathcal{I})(K*[\rho_h^n]_T), \nabla \rho_h),
\end{equation}
where $\mathcal{I}$ is the identity operator. Integrating by parts and using \eqref{Kxn} leads to
\begin{equation}\label{lm4.2-lab2}
\begin{array}{rcl}
k (\rho_h \nabla K*[\rho_h^n]_T, \nabla \rho_h)&= &\displaystyle - \frac{k}{2}(\Delta K*[\rho_h^n]_T,  \rho_h^2)+ k((\nabla K*[\rho^n_h]_T)\cdot\n,\rho_h^2)_{\partial\Omega}
\\
&\le &C k \|\Delta K\|_{L^\infty(\R^d)} \|\rho_h^n\|_{L^1(\Omega)}\|\rho_h\|^2_{L^2(\Omega)},
\end{array}
\end{equation}
where we have used the fact that $\|[\rho_h^n]_T\|_{L^1(\Omega)}\le\|\rho_h^n\|_{L^1(\Omega)}$ since $\rho_h^n\ge0$. Now note that \eqref{error-Linf-W1inf_W2inf-global} and \eqref{inv:H1-L2} imply that
\begin{equation}\label{lm4.2-lab3}
\begin{array}{rcl}
k (\rho_h \nabla(\mathcal{I}_h-\mathcal{I})(K*[\rho_h^n]_T), \nabla \rho_h)&\le& C k h 
\|\nabla^2 K\|_{L^{\infty}(\R^d)} \|[\rho_h^n]_T\|_{L^1(\Omega)} \|\rho_h\|_{L^2(\Omega)} \|\nabla\rho_h\|_{L^2(\Omega)}
\\
&\le& C k\|\nabla^2 K\|_{L^{\infty}(\R^d)} \|\rho_h^n\|_{L^1(\Omega)} \|\rho_h\|^2_{L^2(\Omega)}.
\end{array}
\end{equation}
Combining \eqref{lm4.2-lab2} and \eqref{lm4.2-lab3}, we find
\begin{equation}\label{lm4.2-lab4}
k (\rho_h \nabla \mathcal{I}_h(K*[\rho_h^n]_T), \nabla \rho_h)\le C k \|K\|_{W^{2,\infty}(\R^d)} \|\rho_h^n\|_{L^1(\Omega)} \|\rho_h\|^2_{L^2(\Omega)}.
\end{equation}
Putting \eqref{lm4.2-lab1}, \eqref{lm4.1-lab1-bis}, \eqref{lm4.2-lab1-bis}, and \eqref{lm4.2-lab4} together, we arrive at the estimate
$$
\frac{3}{2}\|\rho_h\|^2_h+k  h^\gamma\|\nabla \rho_h\|^2_{L^2(\Omega)}\le4\|\rho^n_h\|^2_h+ |\Omega| \frac{k}{h^{1+\gamma}}  A(B_{L^\infty})+ C k \|K\|_{W^{2,\infty}(\R^d)} \|\rho_h^n\|_{L^1(\Omega)} \|\rho_h\|^2_{L^2(\Omega)}
$$
and, in view of \eqref{Restriction_I} and \eqref{Equivalence-L2},
$$
\|\rho_h\|_*=:\|\rho_h\|^2_h+k h^\gamma\|\nabla \rho_h\|^2_{L^2(\Omega)}\le 4\|\rho^n_h\|^2_h+ \frac{k}{h^{1+\gamma}}|\Omega| A(B_{L^\infty}):=R.
$$
As a result, if we choose $r>R$, we find that $\|\tilde\rho_h\|_*<r$ implies that $\|\Phi(\tilde\rho_h)\|_*<r$.

Let us see that $\Phi$ is a continuous mapping from $D_h$ into $D_h$ with respect to the $\|\cdot\|_*$-norm. Suppose that $\tilde\rho_{h,m}\to \tilde\rho_h$ in the $\|\cdot\|_*$-norm as $m\to\infty$. Then we want to prove that $\Phi(\tilde\rho_{h,m})\to \Phi(\tilde\rho_h)$ in the $\|\cdot\|_*$-norm as $m\to 0$. To do this, we compare \eqref{eq:Phi} and \eqref{eq:Phi} for $\rho_h=\tilde\rho_{h,m}$, and test against $\rho_h=\tilde\rho_{h,m}-\rho_h$ to get
$$
\begin{array}{rcl}
\|\Phi(\tilde\rho_{h,m})-\Phi(\tilde\rho_{h})\|^2_*&=&-k(\nabla\mathcal{I}_h A_T(\tilde \rho_{h,m})-\nabla\mathcal{I}_h A_T(\tilde\rho_h),\nabla(\Phi(\tilde\rho_{h,m})-\Phi(\tilde\rho_{h})))
\\
&&+k ((\Phi(\tilde\rho_{h,m})-\Phi(\tilde\rho_{h})) \nabla\mathcal{I}_h (K*[\rho_h^n]_T), \nabla(\Phi(\tilde\rho_{h,m})-\Phi(\tilde\rho_{h}))).
\end{array}
$$

It is straightforward to see that
\begin{align*}
-k(\nabla\mathcal{I}_h A_T(\tilde \rho_{h,m})&-\nabla\mathcal{I}_h A_T(\tilde\rho_h),\nabla(\Phi(\tilde\rho_{h,m})-\Phi(\tilde\rho_{h})))
\\
&\le k\|\nabla\mathcal{I}_h A_T(\tilde \rho_{h,m})-\nabla\mathcal{I}_h A_T(\tilde\rho_h)\|_{L^2(\Omega)} \| \nabla(\Phi(\tilde\rho_{h,m})-\Phi(\tilde\rho_{h}))\|_{L^2(\Omega)}
\\
&\le \frac{1}{2}\frac{k}{h^\gamma}\|\nabla\mathcal{I}_h A_T(\tilde \rho_{h,m})-\nabla\mathcal{I}_h A_T(\tilde\rho_h)\|^2_{L^2(\Omega)}+\frac{1}{2} k  h^\gamma\|\nabla(\Phi(\tilde\rho_{h,m})-\Phi(\tilde\rho_{h}))\|^2_{L^2(\Omega)}.
\end{align*}
and, from \eqref{lm4.2-lab4},
\begin{align*}
k ((\Phi(\tilde\rho_{h,m})&-\Phi(\tilde\rho_{h})) \nabla\mathcal{I}_h (K*[\rho_h^n]_T), \nabla(\Phi(\tilde\rho_{h,m})-\Phi(\tilde\rho_{h})))
\\
\le &C k \| K\|_{W^{2,\infty}(\R^d)} \|\rho_h^n\|_{L^1(\Omega)} \|\Phi(\tilde\rho_{h,m})-\Phi(\tilde\rho_{h})\|^2_{L^2(\Omega)}.
\end{align*}
Therefore, under \eqref{Restriction_I}, we finally get
$$
\|\Phi(\tilde\rho_{h,m})-\Phi(\tilde\rho_{h})\|^2_*\le \frac{k}{h^\gamma}\|\nabla\mathcal{I}_h A_T(\tilde \rho_{h,m})-\nabla\mathcal{I}_h A_T(\tilde\rho_h)\|^2_{L^2(\Omega)}.
$$
As we are dealing with a finite-dimensional space, all norms are equivalent in $D_h$; and therefore we infer that $\tilde\rho_{h,m}\to \tilde\rho_h$ in  $C(\bar\Omega)$ as $m\to \infty$. Since $ A_T$ is a continuous operator, we obtain that $ A_T(\tilde\rho_{h,m})\to  A_T(\tilde\rho_h)$ in $C(\bar\Omega)$ as $m\to\infty$. This gives that  $\mathcal{I}_h A_T(\tilde\rho_{h,m})\to \mathcal{I}_h A_T(\tilde\rho_h)$ in $H^1(\Omega)$ as $m\to\infty$. Now the continuity of $\Phi$ is obvious.

Next apply the Brouwer fixed-point theorem to conclude the proof.
\end{proof}

Once we have proved existence, we turn to the question of uniqueness.

\begin{lemma}[Uniqueness] Let $\rho^n_h\in L^1(\Omega)$ such that $\rho_h^n\ge0$ in $\Omega$. Then  scheme \eqref{Scheme} possesses at most one solution provided that
\eqref{Restriction_II} holds.
\end{lemma}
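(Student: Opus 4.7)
The plan is to argue in the standard way for uniqueness of a nonlinear implicit scheme: assume $\rho_h^{(1)}, \rho_h^{(2)}\in D_h$ both solve \eqref{Scheme} with the same $\rho_h^n$, set $w_h := \rho_h^{(1)}-\rho_h^{(2)}$, subtract the two copies of \eqref{Scheme}, and test with $\bar\rho_h = w_h$. This produces the energy identity
\begin{equation*}
\frac{1}{k}\|w_h\|_h^2 + h^\gamma\|\nabla w_h\|_{L^2(\Omega)}^2 + (\nabla\mathcal{I}_h(A_T(\rho_h^{(1)})-A_T(\rho_h^{(2)})), \nabla w_h) = (w_h\nabla\mathcal{I}_h(K*[\rho_h^n]_T), \nabla w_h).
\end{equation*}
The hope is to prove the first three terms nonnegative and to bound the right-hand side above by $C(1+\frac{1}{h})\|K\|_{W^{2,\infty}(\R^d)}\|\rho_h^n\|_{L^1(\Omega)}\|w_h\|_{L^2(\Omega)}^2$, so that \eqref{Restriction_II} forces $w_h\equiv 0$.

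I would handle the convective right-hand side exactly as in the existence proof; compare \eqref{lm4.2-lab1-tris}--\eqref{lm4.2-lab4}. Namely, decompose $\nabla\mathcal{I}_h(K*[\rho_h^n]_T) = \nabla K*[\rho_h^n]_T + \nabla(\mathcal{I}_h-\mathcal{I})(K*[\rho_h^n]_T)$; integrate the first piece by parts as in \eqref{lm4.2-lab2}, dropping the boundary term thanks to \eqref{Kxn} and $w_h^2\ge 0$, and use $\|\Delta K*[\rho_h^n]_T\|_{L^\infty(\R^d)} \le \|\Delta K\|_{L^\infty(\R^d)}\|\rho_h^n\|_{L^1(\Omega)}$; estimate the interpolation defect via \eqref{error-Linf-W1inf_W2inf-global}; and finish with Cauchy--Schwarz together with one application of the inverse inequality \eqref{inv:H1-L2-global} on $\|\nabla w_h\|_{L^2(\Omega)}$. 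The resulting factor $C(1+\frac{1}{h})\|K\|_{W^{2,\infty}(\R^d)}\|\rho_h^n\|_{L^1(\Omega)}$ is precisely the one in \eqref{Restriction_II}.

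The delicate step is the diffusion term: my plan is to show it is \emph{nonnegative}, so that it drops out of the inequality. This is the discrete counterpart of the continuous monotonicity identity $(\nabla(A(u)-A(v)), \nabla(u-v))\ge 0$, valid for any monotone $A$. At the discrete level the argument must be performed element-by-element: on each $E\in\mathcal{E}_h$, expand $\nabla\mathcal{I}_h(A_T(\rho_h^{(1)})-A_T(\rho_h^{(2)}))$ in $\{\nabla\varphi_\a\}_{\a\in E}$, use the partition-of-unity identity $\sum_\a\nabla\varphi_\a=0$ to rewrite diagonal stiffness entries in terms of off-diagonal ones, and invoke the acute-mesh sign $\int_E\nabla\varphi_{\a_i}\cdot\nabla\varphi_{\a_j}\le 0$ from \eqref{off-diagonal} together with the nodal monotonicity $(A_T(\rho_h^{(1)}(\a))-A_T(\rho_h^{(2)}(\a)))\,w_h(\a)\ge 0$ at each vertex.

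Putting the two estimates together and using \eqref{Equivalence-L2} to replace $\|w_h\|_h$ by $\|w_h\|_{L^2(\Omega)}$ one arrives at
\begin{equation*}
\left(\tfrac{1}{C_{\rm eq}^2\, k}-C\bigl(1+\tfrac{1}{h}\bigr)\|K\|_{W^{2,\infty}(\R^d)}\|\rho_h^n\|_{L^1(\Omega)}\right)\|w_h\|_{L^2(\Omega)}^2\le 0,
\end{equation*}
so that \eqref{Restriction_II} makes the bracketed factor strictly positive and thereby forces $w_h=0$. The main obstacle I expect is the element-wise monotonicity bookkeeping for the diffusion term: the continuous monotonicity identity does not automatically survive the mass lumping induced by $\mathcal{I}_h$, and a careful combinatorial argument exploiting simultaneously the acuteness of the mesh and the monotonicity of $A_T$ will be required, analogous to but finer than the one underlying \eqref{Coercitivity}.
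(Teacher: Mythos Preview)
Your plan founders at precisely the step you flagged as delicate: the claim that
\[
(\nabla\mathcal{I}_h(A_T(\rho_h^{(1)})-A_T(\rho_h^{(2)})),\nabla w_h)\ge 0
\]
is \emph{false} for a general monotone $A_T$, even on an acute mesh. After the partition-of-unity rewriting you describe, the gradient pairing becomes
\[
\sum_{i<j}(-s_{ij})\bigl(g_i-g_j\bigr)\bigl(w_i-w_j\bigr),\qquad s_{ij}=(\nabla\varphi_{\a_i},\nabla\varphi_{\a_j})\le 0,
\]
with $g_i=A_T(\rho^{(1)}_h(\a_i))-A_T(\rho^{(2)}_h(\a_i))$. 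Acuteness gives $-s_{ij}\ge 0$, but you would then need $(g_i-g_j)(w_i-w_j)\ge 0$, i.e.\ that the \emph{second} difference $A_T(a)-A_T(b)-A_T(c)+A_T(d)$ have the same sign as $a-b-c+d$. Monotonicity of $A_T$ does not provide this; for instance with $A(s)=\log(1+s)$ and nodal values $\rho^{(1)}=(e^{10}-1,\,e^3-1,\,e^3-1)$, $\rho^{(2)}=(e^{9.9}-1,\,0,\,0)$ on an equilateral triangle one computes a strictly negative sum. The nodal monotonicity $g_\a w_\a\ge 0$ that you invoke controls the diagonal contributions $\sum_i g_i w_i s_{ii}\ge 0$, but the off-diagonal block $\sum_{i\ne j}g_i w_j s_{ij}$ has no sign, and no combinatorial bookkeeping can rescue the inequality.

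The paper sidesteps exactly this obstruction by testing not with $w_h$ but with a discrete inverse-Laplacian potential $\phi_h\in D_h$ defined by $(\nabla\phi_h,\nabla\bar\rho_h)=(w_h,\bar\rho_h)_h$. This converts the troublesome $H^1$ pairing into the lumped $L^2$ pairing
\[
(\nabla\mathcal{I}_h(A_T(\rho_h^{(1)})-A_T(\rho_h^{(2)})),\nabla\phi_h)=(\mathcal{I}_h(A_T(\rho_h^{(1)})-A_T(\rho_h^{(2)})),w_h)_h=\sum_{\a}g_\a w_\a\!\int_\Omega\varphi_\a\ge 0,
\]
where the nodal monotonicity you cite now applies directly. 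The price is that the convective term becomes $k(w_h\nabla\mathcal{I}_h(K*[\rho_h^n]_T),\nabla\phi_h)$, whose estimation requires the auxiliary continuous problem \eqref{eq:phi(h)} and the comparison bounds \eqref{Error-phi(h)}--\eqref{phi(h)-phi-H1}; this is precisely where the $1/h$ factor in \eqref{Restriction_II} originates (not, as you suggest, from an inverse inequality applied to $w_h$ in the existence-type estimate, which would only yield the milder \eqref{Restriction_I}).
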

\begin{proof} Suppose that there are two solutions $\rho^1_h$ and $\rho_h^2$, respectively. Define $\rho_h=\rho_h^1-\rho_h^2$ which satisfies
\begin{equation}\label{lm4.3-lab1}
\frac{1}{k}(\rho_h,\bar \rho_h)_h+ h^\gamma (\nabla\rho_h,\nabla\bar\rho_h)+(\nabla\mathcal{I}_h(A([\rho_h^1]_T)-A([\rho_h^2]_T)), \nabla\bar\rho_h)-(\rho_h \nabla \mathcal{I}_h(K*[\rho^n_h]_T),\nabla\bar\rho_h)=0.
\end{equation}
Let us define $\phi_h\in D_h$ such that
\begin{equation}\label{lm4.3-lab2}
(\nabla\phi_h,\nabla\bar\rho_h)=(\rho_h,\bar\rho_h)_h\quad\mbox{ for all }\quad \bar\rho_h\in D_h.
\end{equation}
Select $\bar\rho_h=\phi_h$ in \eqref{lm4.3-lab1}  to get
\begin{equation}\label{lm4.3-lab3}
\|\nabla\phi_h\|^2_{L^2(\Omega)}+ k\,h^\gamma\|\rho_h\|_h^2=-k(\nabla\mathcal{I}_h(A([\rho_h^1]_T)-A([\rho_h^2]_T)), \nabla\phi_h)+k(\rho_h \nabla\mathcal{I}_h (K*[\rho_h^n]_T), \nabla\phi_h).
\end{equation}
The first term on the right-hand side has negative sign. Indeed, by \eqref{lm4.3-lab2}, the mean-value theorem and $\rm (A1)$,
\begin{align}
\nonumber
-k(\nabla\mathcal{I}_h(A([\rho_h^1]_T)-A([\rho_h^2]_T)), \nabla\phi_h)&=-k(\mathcal{I}_h (A([\rho_h^1]_T)-A([\rho_h^2]_T)), \rho_h )_h
\\
&=
\label{lm4.3-lab4}
\displaystyle
-k\sum_{\a\in \mathcal{N}_h} A'(\xi_\a)([\rho_h^1(\a)]_T-[\rho_h^2(\a)]_T) \rho_h(\a)  \int_\Omega\varphi_a(\x)\, \dx
\\
\nonumber
&\le
\displaystyle
-k\sum_{\a\in \mathcal{N}_h} A'(\xi_\a) ([\rho_h^1(\a)]_T-[\rho_h^2(\a)]_T)^2  \int_\Omega\varphi_a(\x)\, \dx\le 0,
\end{align}
where $\xi_\a\in ([\rho_h^1(\a)]_T,[\rho_h^2(\a)]_T)$ or $ ([\rho_h^2(\a)]_T,[\rho_h^1(\a)]_T)$.
For the second term, we proceed as follows. Combing  \eqref{Discrete-Laplacian}  and \eqref{lm4.3-lab2}, we have $-\tilde\Delta_h\phi_h=\rho_h$. Thus, by
\eqref{eq:phi(h)}, we write
$$
\begin{array}{rcl}
k(\rho_h\nabla\mathcal{I}_h( K*[\rho^n_h]_T), \nabla\phi_h)&=&-k(\Delta\phi(h)\nabla K*[\rho^n_h]_T, \nabla\phi(h))
\\
&&-k(\tilde\Delta\phi_h\nabla (\mathcal{I}_h-\mathcal{I})(K*[\rho^n_h]_T), \nabla\phi(h))
\\
&&-k(\tilde\Delta_h\phi_h\nabla\mathcal{I}_h (K*[\rho^n_h]_T), \nabla(\phi_h-\phi(h))).
\end{array}
$$
Integration by parts shows that
$$
\begin{array}{rcl}
-k(\Delta\phi(h)\nabla K*[\rho^n_h]_T, \nabla\phi(h))&=&k((\nabla\phi(h)\cdot\nabla)\nabla K*[\rho_h^n]_T, \nabla\phi(h))
\\
&&+k((\nabla K*[\rho_h^n]_T\cdot\nabla)\nabla\phi(h),\nabla\phi(h))
\\
&=&\displaystyle
k((\nabla\phi(h)\cdot\nabla)\nabla K*[\rho_h^n]_T, \nabla\phi(h))
\\
&&\displaystyle
- \frac{k}{2}(\Delta K*[\rho_h^n]_T, |\nabla\phi(h)|^2)
\\
&&\displaystyle
+\frac{k}{2}(|\nabla \phi(h)|^2, (\nabla K*[\rho_h^n]_T)\cdot\n)_{\partial\Omega},
\end{array}
$$
which, from \eqref{phi(h)-phi-H1} and \eqref{Kxn}, gives
$$
-k(\Delta\phi(h)\nabla \mathcal{I}_h(K*[\rho^n_h]_T), \nabla\phi(h))\le C k \|\Delta K\|_{L^{\infty}(\R^d)}\| \rho_h^n\|_{L^1(\Omega)} \|\nabla\phi_h\|^2_{L^2(\Omega)}.
$$
In view of \eqref{error-Linf-W1inf_W2inf-global}, \eqref{eq:phi(h)} \eqref{inv:Lap-Grad-phi}, we have
$$
-k(\Delta\phi(h)\nabla (\mathcal{I}_h-\mathcal{I})(K*[\rho^n_h]_T), \nabla\phi(h))\le C k \|\nabla^2 K\|_{L^{\infty}(\R^d)} \|\rho_h^n\|_{L^1(\Omega)} \|\nabla \phi_h\|^2_{L^2(\Omega)}.
$$
Using \eqref{Error-phi(h)}, \eqref{inv:Lap-Grad-phi}, and \eqref{sta_Ih_H1},  leads to the estimate
$$
\begin{array}{rcl}
-k(\tilde\Delta_h\phi_h\nabla \mathcal{I}_h( K*[\rho^n_h]_T), \nabla(\phi_h-\phi(h)))&\le&k\, h \|\tilde\Delta_h\phi_h\|^2_{L^2(\Omega)}\|\nabla K\|_{L^\infty(\R^d)}\|\rho^n_h\|_{L^1(\Omega)}
\\
&\le&\displaystyle
C\frac{k}{h} \|\nabla K\|_{L^{\infty}(\R^d)} \|\rho_h^n\|_{L^1(\Omega)}\|\nabla\phi_h\|^2_{L^2(\Omega)}.
\end{array}
$$
Therefore,
\begin{equation}\label{lm4.3-lab5}
k(\rho_h\nabla\mathcal{I}_h (K*[\rho^n_h]_T), \nabla\phi_h)\le Ck (1 + \frac{1}{h}) \| K\|_{W^{2,\infty}(\R^d)} \|\rho_h^n\|_{L^1(\Omega)}\|\nabla\phi_h\|^2_{L^2(\Omega)}.
\end{equation}

From \eqref{lm4.3-lab4} and \eqref{lm4.3-lab5}, we estimate \eqref{lm4.3-lab3} as
$$
\|\nabla\phi_h\|^2_{L^2(\Omega)}+k\,h^\gamma\|\rho_h\|_h^2\le Ck (1 + \frac{1}{h}) \|K\|_{W^{2,\infty}(\R^d)} \|\rho_h^n\|_{L^1(\Omega)}\|\nabla\phi_h\|_{L^2(\Omega)}^2.
$$
The result follows by taking $k/h$ small enough so that \eqref{Restriction_II} holds.
\end{proof}

It should be noted that condition \eqref{Restriction_II} is indeed more demanding than condition \eqref{Restriction_I} concerning the space and time parameters.

\section{Non-negativity and a priori estimates}
In this section we show that the discrete solution $\rho^{n+1}_h$ computed by \eqref{Scheme}  is nonnegative. Moreover, we derive some a priori energy estimates.

\begin{lemma}[Non-negativity]\label{lm:Positivity} Let $\rho^n_h\in L^1(\Omega)$ be such that $\rho_h^n\ge0$ in $\Omega$. Assume that \eqref{Restriction_II} is satisfied. Then the solution $\rho^{n+1}_h$ to scheme \eqref{Scheme} is nonnegative provided that \eqref{Restriction_III} holds.
\end{lemma}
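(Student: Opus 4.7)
The plan is a contradiction argument based on testing \eqref{Scheme} against the nodal negative part. Define $w_h\in D_h$ by $w_h(\boldsymbol{a})=\min\{\rho_h^{n+1}(\boldsymbol{a}),0\}$ at every node $\boldsymbol{a}\in\mathcal{N}_h$, and decompose $\rho_h^{n+1}=\rho_h^{n+1,+}+w_h$, where $\rho_h^{n+1,+}$ is the nodal positive part. Since a piecewise-linear function on a simplex is bounded between its vertex values, showing $w_h\equiv 0$ will yield $\rho_h^{n+1}\ge 0$ pointwise.

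Testing \eqref{Scheme} with $\bar\rho_h=w_h$, I would first verify three favorable contributions. For the \emph{discrete time derivative}, the lumped inner product reduces to a node-by-node sum, and using $\rho_h^n\ge 0$ one checks that at every node where $w_h(\boldsymbol{a})<0$ the product $(\rho_h^{n+1}(\boldsymbol{a})-\rho_h^n(\boldsymbol{a}))w_h(\boldsymbol{a})\ge w_h(\boldsymbol{a})^2$, so $(\delta_t\rho_h^{n+1},w_h)_h\ge \tfrac{1}{k}\|w_h\|_h^2$. For the \emph{stabilizing diffusion}, expand in the nodal basis so that $(\nabla\rho_h^{n+1},\nabla w_h)=\|\nabla w_h\|_{L^2(\Omega)}^2+(\nabla\rho_h^{n+1,+},\nabla w_h)$. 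The cross term has zero diagonal (at each node either $\rho_h^{n+1,+}$ or $w_h$ is zero) and nonnegative off-diagonal, because the stiffness entries are nonpositive by acuteness \eqref{off-diagonal} while $\rho_h^{n+1,+}(\boldsymbol{a}_j)w_h(\boldsymbol{a}_i)\le 0$, giving $h^\gamma(\nabla\rho_h^{n+1},\nabla w_h)\ge h^\gamma\|\nabla w_h\|_{L^2(\Omega)}^2$. For the \emph{$A_T$ term}, note that $A_T(s)=0$ for $s\le 0$ by construction and $A(0)=0$ from $\rm(A1)$, so $\mathcal{I}_h A_T(\rho_h^{n+1})$ vanishes at every node where $\rho_h^{n+1}\le 0$; the same expansion argument then yields $(\nabla\mathcal{I}_h A_T(\rho_h^{n+1}),\nabla w_h)\ge 0$.

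The main obstacle is the \emph{transport term}. Writing $\boldsymbol{v}_h=\nabla\mathcal{I}_h(K*[\rho_h^n]_T)$, split
\[
(\rho_h^{n+1}\boldsymbol{v}_h,\nabla w_h)=(w_h\boldsymbol{v}_h,\nabla w_h)+(\rho_h^{n+1,+}\boldsymbol{v}_h,\nabla w_h).
\]
The key a priori bound is $\|\boldsymbol{v}_h\|_{L^\infty(\Omega)}\le \|\nabla K\|_{L^\infty(\mathds{R}^d)}\|\rho_h^n\|_{L^1(\Omega)}$, which follows from \eqref{sta_Ih_H1-global} together with Young's inequality for convolutions. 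Cauchy--Schwarz applied to the $w_h$ piece and a Young splitting absorb a fraction of $h^\gamma\|\nabla w_h\|_{L^2(\Omega)}^2$. The $\rho_h^{n+1,+}$ piece is more delicate since no a priori $L^2$ bound on $\rho_h^{n+1,+}$ is available, but $\nabla w_h$ vanishes on any element all of whose vertices satisfy $\rho_h^{n+1}\ge 0$, so one may restrict to the skeleton of elements touching a negative node. Applying the inverse estimate \eqref{inv:H1-L2-global} to trade $\|\nabla w_h\|_{L^2}$ for $h^{-1}\|w_h\|_{L^2}$ and using the acute-mesh coercivity constant $C_{\rm neg}$ from \eqref{diagonal}, the contribution can be controlled by
\[
C\,\|K\|_{W^{2,\infty}(\mathds{R}^d)}\|\rho_h^n\|_{L^1(\Omega)}\,\tfrac{h}{C_{\rm neg}}\|\nabla w_h\|_{L^2(\Omega)}^2+\tfrac{1}{2}h^\gamma\|\nabla w_h\|_{L^2(\Omega)}^2.
\]

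Collecting all four contributions and using \eqref{Equivalence-L2}, one is left with
\[
\frac{1}{k}\|w_h\|_h^2+\Bigl(\tfrac{1}{2}h^\gamma-C\,h\,\|K\|_{W^{2,\infty}(\mathds{R}^d)}\|\rho_h^n\|_{L^1(\Omega)}/C_{\rm neg}\Bigr)\|\nabla w_h\|_{L^2(\Omega)}^2\le 0,
\]
whose bracketed coefficient is exactly nonnegative under \eqref{Restriction_III} (rearrange as $C\,h^{1-\gamma}\|K\|_{W^{2,\infty}}\|\rho_h^n\|_{L^1}<C_{\rm neg}$). This forces $w_h\equiv 0$ and hence $\rho_h^{n+1}\ge 0$. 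The hard step is the fourth: controlling the transport term without a pre-existing $L^\infty$ or $L^2$ bound on $\rho_h^{n+1,+}$, which is what dictates the explicit balance in condition \eqref{Restriction_III} between the stabilization exponent $\gamma$ and the acute-mesh constant $C_{\rm neg}$.
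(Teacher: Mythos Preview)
Your overall architecture---test against the nodal negative part $w_h$ and show the resulting inequality forces $w_h\equiv 0$---matches the paper, and your treatment of the time derivative, the $A_T$ term, and the stabilization cross term $(\nabla\rho_h^{n+1,+},\nabla w_h)\ge 0$ is correct. The gap is in the transport cross term $(\rho_h^{n+1,+}\boldsymbol{v}_h,\nabla w_h)$. Your claimed bound $C\|K\|_{W^{2,\infty}}\|\rho_h^n\|_{L^1}\,\tfrac{h}{C_{\rm neg}}\|\nabla w_h\|_{L^2(\Omega)}^2$ cannot be justified: on an element touching a negative node, $\rho_h^{n+1,+}$ may be arbitrarily large at the remaining vertices, and neither the inverse inequality nor \eqref{diagonal} gives any relation between $\|\rho_h^{n+1,+}\|$ and $\|\nabla w_h\|$. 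Since you have already discarded the positive contribution $h^\gamma(\nabla\rho_h^{n+1,+},\nabla w_h)$ by bounding it below by zero, there is nothing left to absorb this term.

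The fix, which is precisely how the paper proceeds, is to \emph{keep the stabilization and transport terms together} for the cross part. Condition \eqref{Restriction_III} is designed so that the combined element matrix $h^\gamma(\nabla\varphi_{\a_i},\nabla\varphi_{\a_j})-(\varphi_{\a_i}\boldsymbol{v}_h,\nabla\varphi_{\a_j})$ has strictly negative off-diagonal entries (compare \eqref{off-diagonal} with the local estimate $|(\varphi_{\a_i}\boldsymbol{v}_h,\nabla\varphi_{\a_j})_E|\le C h^{d-1}\|\nabla K\|_{L^\infty}\|\rho_h^n\|_{L^1}$). This M-matrix structure yields
\[
h^\gamma(\nabla\rho_h^{n+1,+},\nabla w_h)-(\rho_h^{n+1,+}\boldsymbol{v}_h,\nabla w_h)=\sum_{\a\neq\tilde\a}\rho_h^{n+1,+}(\a)\,w_h(\tilde\a)\bigl[h^\gamma(\nabla\varphi_\a,\nabla\varphi_{\tilde\a})-(\varphi_\a\boldsymbol{v}_h,\nabla\varphi_{\tilde\a})\bigr]\ge 0
\]
with no size hypothesis on $\rho_h^{n+1,+}$ whatsoever. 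After this reduction one is left with $\|w_h\|_h^2+k h^\gamma\|\nabla w_h\|^2\le k(w_h\boldsymbol{v}_h,\nabla w_h)$. For this last term your Cauchy--Schwarz/Young argument is also insufficient (it produces a residual $Ch^{-\gamma}\|w_h\|^2$, not controlled by \eqref{Restriction_II}); the paper instead integrates by parts as in \eqref{lm4.2-lab4}, using the boundary sign \eqref{Kxn}, to obtain $k(w_h\boldsymbol{v}_h,\nabla w_h)\le Ck\|K\|_{W^{2,\infty}}\|\rho_h^n\|_{L^1}\|w_h\|_h^2$, which \eqref{Restriction_II} then absorbs.
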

\begin{proof} First of all, note that, for all $E\in\mathcal{E}_h$ and for all $\a_i,\a_j\in E$ with $i\not=j$,
\begin{equation}\label{lm5.1-lab1}
\begin{array}{rcl}
\displaystyle
\int_E\varphi_{\a_i} \nabla \mathcal{I}_h (K*[\rho_h^n]_T)\cdot\nabla\varphi_{\a_j}\dx&\le& |E| \|\varphi_{\a_j}\|_{L^\infty(E)} \|\nabla\mathcal{I}_h(K*[\rho_h^n]_T)\|_{L^\infty(E)} \|\nabla\varphi_{\a_j}\|_{L^\infty(E)}
\\
&\le& C h^{d-1}  \|\nabla K\|_{L^{\infty}(\R^d)} \|\rho_h^n\|_{L^1(E)},
\end{array}
\end{equation}
where we used \eqref{inv:H1-L2} and \eqref{sta_Ih_H1}.  Comparing \eqref{off-diagonal} with \eqref{lm5.1-lab1}, we find that
\begin{align*}
h^{\gamma}\int_E\nabla \varphi_{\a_i}\cdot\nabla \varphi_{\a_j}\dx &- \int_E\varphi_{\a_i} \nabla \mathcal{I}_h(K*[\rho_h^n]_T)\cdot\nabla\varphi_{\a_j}\dx
\\
&\le h^{d-2} h^{\gamma} (-C_{\rm neg} + C h^{1-\gamma}\|\nabla K\|_{L^{\infty}(\R^d)} \|\rho_h^n\|_{L^1(E)})<0
\end{align*}
holds if we let $ C h^{1-\gamma}\|\nabla K\|_{L^{\infty}(\R^d)} \|\rho_h^n\|_{L^1(E)}< C_{\rm neg}$, which is a consequence of \eqref{Restriction_III}. As a result, summing over $E\in {\rm supp }\, \varphi_{\a_i}\cap {\rm supp }\, \varphi_{\a_j}  $ yields
\begin{equation}\label{lm5.1-lab2}
h^\gamma (\nabla \varphi_{\a_i}, \nabla \varphi_{\a_j}) - (\varphi_{\a_i} \nabla\mathcal{I}_h (K*[\rho_h^n]_T),\nabla\varphi_{\a_j})<0.
\end{equation}
Analogously, we have, from \eqref{diagonal}, that
\begin{equation}\label{lm5.1-lab3}
h^\gamma(\nabla \varphi_{\a_i}, \nabla \varphi_{\a_i}) - (\varphi_{\a_i} \nabla\mathcal{I}_h (K*[\rho_h^n]_T),\nabla\varphi_{\a_i})>0
\end{equation}
holds if we let $ C h^{1-\gamma} \|\nabla K\|_{L^{\infty}(\R^d)}\|\rho_h^n\|_{L^1(E)}< C_{\rm neg}$, which is globally imposed in \eqref{Restriction_III}.

Now let $\rho_{h}^{\rm min}\in D_h$ be defined as
$$
\rho_{h}^{\rm min}=\sum_{\a\in\mathcal{N}_h} \rho_{h}^{-}(\a)\varphi_\a,
$$
where $\rho_{h}^{-}(\a)=\min\{0, \rho^{n+1}_{h}(\a)\}$. Analogously, one defines $\rho^{\rm max}_h\in D_h$ as
$$
\rho_{h}^{\rm max}=\sum_{\a\in\mathcal{N}_h} \rho_{h}^{+}(\a)\varphi_\a,$$
where $\rho_{h}^{+}(\a)=\max\{0, \rho^{n+1}_{h}(\a)\}$. Notice that $\rho^{n+1}_{h}=\rho_{h}^{\rm min}+ \rho_{h}^{\rm max}$. Set $\bar\rho_h=\rho^{\rm min}_h$ in \eqref{Scheme} to get
\begin{equation}\label{lm5.1-lab4}
\begin{array}{ll}
(\delta_t\rho^{n+1}_h,\rho^{\rm min}_h)_h&+ h^\gamma (\nabla\rho^{n+1}_h,\nabla \rho^{\rm min}_h)
\\
&+(\nabla\mathcal{I}_h  A([\rho^{n+1}_h]_T),\nabla \rho^{\rm min}_h)-(\rho^{n+1}_h\nabla\mathcal{I}_h( K*[\rho^n_h]_T),\nabla \rho^{\rm min}_h )=0.
\end{array}
\end{equation}
We will handle each term of \eqref{lm5.1-lab4} in order to show that $\rho_h^{\rm min}\equiv0$. Indeed, in virtue of the equality
$$
(\rho^{n+1}_h, \rho^{\rm min}_h)_h=(\rho^{\rm min}_h+\rho^{\rm max}_h, \rho^{\rm min}_h)_h=\|\rho^{\rm min}_h\|_h^2,
$$
it follows that
\begin{equation}\label{lm5.1-lab6}
(\delta_t\rho^{n+1}_h,\rho^{\rm min}_h)_h= \frac{1}{k}(\|\rho_h^{\rm min}\|_h^2-(\rho^n_h, \rho^{\rm min}_h))\ge \frac{1}{k} \|\rho_h^{\rm min}\|_h^2.
\end{equation}
Observe that we have
\begin{equation}\label{lm5.1-lab7}
\begin{array}{rcl}
(\nabla\mathcal{I}_h  A([\rho^{n+1}_h]_T),\nabla \rho^{\rm min}_h)&=&\displaystyle
\sum_{\a\not=\tilde\a\in \mathcal{N}_h} A([\rho_h^{n+1}(\a)]_T) \rho^{\rm min}_h(\tilde\a) (\nabla \varphi_\a, \nabla\varphi_{\tilde \a})
\\
&&\displaystyle
+\sum_{\a\in \mathcal{N}_h} A([\rho_h^{n+1}(\a)]_T) \rho^{\rm min}_h(\a) (\nabla \varphi_\a, \nabla\varphi_{\a})
\\
&=&\displaystyle
\sum_{\a\not=\tilde\a\in \mathcal{N}_h} A([\rho_h^{n+1}(\a)]_T) \rho^{\rm min}_h(\tilde\a) (\nabla \varphi_\a, \nabla\varphi_{\tilde \a})>0
\end{array}
\end{equation}
from  \eqref{off-diagonal} and $A([\rho_h^{n+1}(\a)]_T) \rho^{\rm min}_h(\tilde\a)\le0$. By the decomposition
\begin{align*}
h^\gamma (\nabla\rho^{n+1}_h,\nabla \rho^{\rm min}_h)&-(\rho^{n+1}_h\nabla\mathcal{I}_h( K*[\rho^n_h]_T),\nabla \rho^{\rm min}_h )
\\
=&h^\gamma (\nabla\rho^{\rm max}_h,\nabla \rho^{\rm min}_h)-(\rho^{\rm max}_h\nabla\mathcal{I}_h(K*[\rho^n_h]_T),\nabla \rho^{\rm min}_h )
\\
&+h^\gamma (\nabla\rho^{\rm min}_h,\nabla \rho^{\rm min}_h)-(\rho^{\rm min}_h\nabla \mathcal{I}_h(K*[\rho^n_h]_T),\nabla \rho^{\rm min}_h ),
\end{align*}
we deduce from \eqref{lm5.1-lab2} and \eqref{lm5.1-lab3}  that
\begin{align*}
\nonumber
h^\gamma (\nabla\rho^{\rm max}_h,&\nabla \rho^{\rm min}_h)-(\rho^{\rm max}_h\nabla\mathcal{I}_h (K*[\rho^n_h]_T),\nabla \rho^{\rm min}_h )
\\
=&\displaystyle
\sum_{\a\not=\tilde\a\in\mathcal{N}_h}\rho^{\rm max}_h(\a) \rho^{\rm min}_h(\tilde\a)\Big[ (h^\gamma (\nabla\varphi_\a,\nabla\varphi_{\tilde \a} )-(\varphi_\a\nabla\mathcal{I}_h(K*[\rho^n_h]_T),\nabla \varphi_{\tilde \a})\Big]
\\\nonumber&\displaystyle
+\sum_{\a\in\mathcal{N}_h} \rho^{\rm max}_h(\a) \rho^{\rm min}_h(\a)\Big[ (h^\gamma (\nabla\varphi_\a,\nabla\varphi_{\a})-(\varphi_\a\nabla \mathcal{I}_h(K*[\rho^n_h]_T),\nabla \varphi_{\a})\Big]\ge0
\end{align*}
since $\rho^{\rm max}_h(\a) \rho^{\rm min}_h(\tilde\a)\le0$ and $\rho^{\rm max}_h(\a) \rho^{\rm min}_h(\a)=0$. Therefore,
\begin{equation}\label{lm5.1-lab8}
\begin{array}{rcl}
h^\gamma (\nabla\rho^{\rm min}_h,\nabla \rho^{\rm min}_h)&-&(\rho^{\rm min}_h\mathcal{I}_h(\nabla K*[\rho^n_h]_T),\nabla \rho^{\rm min}_h)
\\
&\le& h^\gamma (\nabla\rho^{n+1}_h,\nabla \rho^{\rm min}_h)-(\rho^{n+1}_h\nabla \mathcal{I}_h(K*[\rho^n_h]_T),\nabla \rho^{\rm min}_h).
\end{array}
\end{equation}
As a result, we infer on applying \eqref{lm5.1-lab6}-\eqref{lm5.1-lab8}  into \eqref{lm5.1-lab4}  that
$$
\|\rho^{\rm min}_h\|^2_h+k\,h^\gamma \|\nabla\rho^{\rm min}_h\|^2\le k (\rho^{\rm min}_h\nabla \mathcal{I}_h(K*[\rho^n_h]_T),\nabla \rho^{\rm min}_h ).
$$
We know from \eqref{lm4.2-lab4} and \eqref{Equivalence-L2} that
$$
\|\rho^{\rm min}_h\|^2_h+k h^\gamma \|\nabla\rho^{\rm min}_h\|^2\le C k \| K\|_{W^{2,\infty}(\R^d)} \|\rho^n_h\|_{L^1(\Omega)} \|\rho^{\rm min}_h\|^2_h.
$$
Thus, from \eqref{Restriction_II},
$$
\|\rho_h^{\rm min}\|_h^2\le0,
$$
which implies that $\rho^{\rm min}_h\equiv0$ and hence  $\rho^{n+1}_h\ge 0$. It completes the proof.
\end{proof}

Since we do not have a pointwise upper bound for $\rho^{n+1}_h$, we must slightly modify the argument leading to a priori energy estimates from \cite{Bertozzi_Slepcev_2010}, which uses the maximum principle. 
\begin{lemma}[Energy estimates]\label{lm:Energy_Estimates} Assume that \eqref{Restriction_II-global} and \eqref{Restriction_III-global} are satisfied. Then  the sequence $\{\rho^{n}_h\}_{n=1}^{N}$ computed via scheme \eqref{Scheme} satisfies
\begin{equation}\label{lm5.1-est-L1}
\|\rho^{n+1}_h\|_{L^1(\Omega)}=\|\rho_h^0\|_{L^1(\Omega)}:=B_{L^1}
\end{equation}
and
\begin{equation}\label{lm5.1-est-L2}
\begin{array}{rcl}
\displaystyle
\|\rho^{n+1}_h\|^2_h+ \sum_{m=0}^{n}(k^2 \|\partial_t\rho^{n+1}_h\|^2_h+k\, h^\gamma\|\nabla\rho^{m+1}_h\|^2_{L^2(\Omega)}+k\|\nabla\mathcal{I}_hA_T(\rho^{m+1}_h)\|^2_{L^2(\Omega)})
\\
\le e^{T B_{L^1}\|K\|_{W^{2,\infty}(\R^d)}} \|\rho^0_h\|^2_h:=B^2_{L^2}.
\end{array}
\end{equation}
\end{lemma}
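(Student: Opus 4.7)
My plan for \eqref{lm5.1-est-L1} is to test \eqref{Scheme} against $\bar\rho_h\equiv 1$. Since $\nabla 1=0$, the diffusion, stabilization, and convection terms all vanish, leaving $(\delta_t\rho^{n+1}_h,1)_h=0$, i.e.\ $(\rho^{n+1}_h,1)_h=(\rho^n_h,1)_h$. The mass-lumped identity $(\rho_h,1)_h=\sum_{\a}\rho_h(\a)\int_\Omega\varphi_\a=\int_\Omega\rho_h$ holds for every $\rho_h\in D_h$, and $\rho^{n+1}_h\ge 0$ by Lemma~\ref{lm:Positivity}, so $(\rho^{n+1}_h,1)_h=\|\rho^{n+1}_h\|_{L^1(\Omega)}$. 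Induction on $n$ then yields \eqref{lm5.1-est-L1}.

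For \eqref{lm5.1-est-L2} I would test \eqref{Scheme} against $\bar\rho_h=\rho^{n+1}_h$. The time-derivative term is handled by the identity $a(a-b)=\tfrac12(a^2-b^2)+\tfrac12(a-b)^2$, which produces $\frac{1}{2k}\bigl(\|\rho^{n+1}_h\|_h^2-\|\rho^n_h\|_h^2\bigr)+\frac{k}{2}\|\delta_t\rho^{n+1}_h\|_h^2$. The stabilization term gives $h^\gamma\|\nabla\rho^{n+1}_h\|_{L^2(\Omega)}^2$ directly. The diffusion term is dealt with using the coercivity inequality \eqref{Coercitivity} applied to the truncated nonlinearity $A_T$ (which is monotone nondecreasing and Lipschitz on $\mathbb{R}$ with constant $C_{\rm Lip}=\max_{[0,B_{L^\infty}]}A'$), yielding
\[
(\nabla\mathcal{I}_hA_T(\rho^{n+1}_h),\nabla\rho^{n+1}_h)\ge \frac{1}{C_{\rm Lip}}\|\nabla\mathcal{I}_hA_T(\rho^{n+1}_h)\|_{L^2(\Omega)}^2.
\]

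The convection term is the delicate one, and I would treat it exactly as in Lemma~4.2: split $\nabla\mathcal{I}_h(K*[\rho^n_h]_T)$ as $\nabla(K*[\rho^n_h]_T)+\nabla(\mathcal{I}_h-\mathcal{I})(K*[\rho^n_h]_T)$, integrate by parts in the first piece to obtain $-\tfrac12(\Delta K*[\rho^n_h]_T,(\rho^{n+1}_h)^2)$ plus a boundary term that is nonpositive by \eqref{Kxn} (here mass conservation \eqref{lm5.1-est-L1} ensures $\|[\rho^n_h]_T\|_{L^1}\le B_{L^1}$), and estimate the interpolation remainder with \eqref{error-Linf-W1inf_W2inf-global} and \eqref{inv:H1-L2-global}. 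This reproduces \eqref{lm4.2-lab4} and, via \eqref{Equivalence-L2}, gives
\[
(\rho^{n+1}_h\nabla\mathcal{I}_h(K*[\rho^n_h]_T),\nabla\rho^{n+1}_h)\le C B_{L^1}\|K\|_{W^{2,\infty}(\R^d)}\|\rho^{n+1}_h\|_h^2.
\]

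Assembling the estimates, multiplying by $2k$, summing over $m=0,\dots,n$, and absorbing the right-hand side by a discrete Gronwall lemma (whose smallness hypothesis is a consequence of \eqref{Restriction_II-global}) delivers \eqref{lm5.1-est-L2} with constant $e^{TB_{L^1}\|K\|_{W^{2,\infty}(\R^d)}}$ (up to adjusting $C$). The main obstacle is the convection term: its coercivity is not immediate because $\rho^{n+1}_h$ is not pointwise bounded and the kernel enters through a piecewise linear nodal interpolant of a convolution. The splitting between the exact convolution (controlled via integration by parts and the sign property \eqref{Kxn}) and an interpolation remainder (controlled by \eqref{error-Linf-W1inf_W2inf-global}) is what allows the bound to be linear in $\|\rho^n_h\|_{L^1(\Omega)}$ and thus, through mass conservation, uniform in $n$.
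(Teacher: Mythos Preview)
Your proposal is correct and follows essentially the same approach as the paper: test \eqref{Scheme} against $\bar\rho_h=1$ and use induction together with Lemma~\ref{lm:Positivity} for \eqref{lm5.1-est-L1}, then test against $\bar\rho_h=\rho^{n+1}_h$, invoke \eqref{Coercitivity} for the diffusion term, reuse the argument of \eqref{lm4.2-lab4} for the convection term, and close with a discrete Gr\"onwall lemma. The only point worth making explicit is that the induction simultaneously propagates nonnegativity and mass conservation, so that at each step $\|\rho^n_h\|_{L^1(\Omega)}=\|\rho^0_h\|_{L^1(\Omega)}$ and hence the global restrictions \eqref{Restriction_II-global}--\eqref{Restriction_III-global} indeed imply the local ones \eqref{Restriction_II}--\eqref{Restriction_III} needed to invoke Lemma~\ref{lm:Positivity}.
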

\begin{proof} We proceed by induction on $n$ to  prove \eqref{lm5.1-est-L1}. From \eqref{Initial-Positivity}, we know that $\rho^1_h\ge 0$ is true by Lemma~\ref{lm:Positivity} for \eqref{Restriction_II-global} and \eqref{Restriction_III-global}. On selecting $\bar\rho_h=1$ in \eqref{Scheme}, we obtain \eqref{lm5.1-est-L1} for $n=0$. The same argument leads us to proving that \eqref{lm5.1-est-L1} holds from $\rho^{n}_h\ge 0$ and $\|\rho^n_h\|_{L^1(\Omega)}=\|\rho_{0h}\|_{L^1(\Omega)}$ by induction hypothesis. At this point, it should be noted that \eqref{Restriction_II} and \eqref{Restriction_III}  combined with \eqref{lm5.1-est-L1} imply \eqref{Restriction_II-global} and \eqref{Restriction_III-global}.

Now let $\bar\rho_h=\rho^{n+1}_h$ in \eqref{Scheme} to get
\begin{align*}
\|\rho^{n+1}_h\|_h^2&+\|\rho_h^{n+1}-\rho_h^n\|_h^2+ 2 k h^\gamma \|\nabla\rho^{n+1}_h\|^2_{L^2(\Omega)}+ 2 C_{\rm Lip}^{-1} k\|\nabla \mathcal{I}_hA_T(\rho^{n+1}_h)\|^2_{L^2(\Omega)}
\\
&\le \|\rho^{n}_h\|_h^2+2 k (\rho^{n+1}_h\nabla \mathcal{I}_h(K*[\rho^n_h]_T),\nabla\rho_h^{n+1}),
\end{align*}
where we have used \eqref{Coercitivity} for $f=A_T$ being non-decreasing and Lipschitzian. Repeating the argument that led to estimating \eqref{lm4.2-lab4} and noting \eqref{lm5.1-est-L1} and \eqref{Equivalence-L2} yields
\begin{align*}
\|\rho^{n+1}_h\|_h^2&+\|\rho_h^{n+1}-\rho_h^n\|_h^2+ h^\gamma \|\nabla\rho^{n+1}_h\|_{L^2(\Omega)}^2+C_{\rm Lip}^{-1}\|\nabla \mathcal{I}_hA_T(\rho^{n+1}_h)\|^2_{L^2(\Omega)}
\\
&\le \|\rho^{n}_h\|_h^2+ C k \|K\|_{W^{2,\infty}(\R^d)} \|\rho^{0}_h\|_{L^1(\Omega)} \|\rho_h^{n+1}\|^2_h.
\end{align*}
By a discrete Grönwall lemma, we conclude that \eqref{lm5.1-est-L2} holds under condition \eqref{Restriction_II}.
\end{proof}

The constants $B_{L^1}$ and $B_{L^2}$ can be estimated uniformly with respect to $h$ in term of $\rho^0$ from \eqref{initial-boundness}.
\begin{corollary} It follows that
\begin{equation}\label{est:derivative}
k\sum_{n=0}^{N-1}\|\delta_t\rho^{n+1}_h\|^2_{(H^1(\Omega))'}\le C,
\end{equation}
where $C>0$ is a constant independent of $h$.
\end{corollary}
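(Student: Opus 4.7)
The strategy is to dualize the scheme against $H^1(\Omega)$-test functions. For an arbitrary $\varphi\in H^1(\Omega)$, I would take $\bar\rho_h=\mathcal{SZ}_h\varphi\in D_h$ in \eqref{Scheme} to obtain an upper bound on $(\delta_t\rho_h^{n+1},\mathcal{SZ}_h\varphi)_h$ in terms of $\|\varphi\|_{H^1(\Omega)}$, and then I would convert that discrete bound into one on $\|\delta_t\rho_h^{n+1}\|_{(H^1(\Omega))'}$ by controlling the approximation error $\varphi-\mathcal{SZ}_h\varphi$ and the mass-lumping error between $(\cdot,\cdot)_h$ and $(\cdot,\cdot)$.

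Substituting $\bar\rho_h=\mathcal{SZ}_h\varphi$ into \eqref{Scheme} and isolating the time-derivative term gives
\begin{align*}
(\delta_t\rho_h^{n+1},\mathcal{SZ}_h\varphi)_h=\,&-h^\gamma(\nabla\rho_h^{n+1},\nabla\mathcal{SZ}_h\varphi)-(\nabla\mathcal{I}_hA_T(\rho_h^{n+1}),\nabla\mathcal{SZ}_h\varphi)\\
&+(\rho_h^{n+1}\nabla\mathcal{I}_h(K*[\rho_h^n]_T),\nabla\mathcal{SZ}_h\varphi).
\end{align*}
Cauchy-Schwarz combined with the $H^1$-stability \eqref{est:SZ-stability} of $\mathcal{SZ}_h$, the $L^\infty$-estimate $\|\nabla\mathcal{I}_h(K*[\rho_h^n]_T)\|_{L^\infty(\Omega)}\le\|K\|_{W^{1,\infty}(\R^d)}\|\rho_h^n\|_{L^1(\Omega)}$ coming from \eqref{sta_Ih_H1-global}, and the mass conservation \eqref{lm5.1-est-L1}, then deliver
\[
|(\delta_t\rho_h^{n+1},\mathcal{SZ}_h\varphi)_h|\le G_n\,\|\varphi\|_{H^1(\Omega)},
\]
with $G_n:=C\bigl(h^\gamma\|\nabla\rho_h^{n+1}\|_{L^2(\Omega)}+\|\nabla\mathcal{I}_hA_T(\rho_h^{n+1})\|_{L^2(\Omega)}+\|\rho_h^{n+1}\|_{L^2(\Omega)}\bigr)$. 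The energy estimate \eqref{lm5.1-est-L2} directly implies $k\sum_{n=0}^{N-1}G_n^2\le C$.

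To pass from the lumped discrete inner product on the left to the continuous duality pairing against $\varphi$, I would split
\[
(\delta_t\rho_h^{n+1},\varphi)=(\delta_t\rho_h^{n+1},\mathcal{SZ}_h\varphi)_h+E_1+E_2,
\]
where $E_1=(\delta_t\rho_h^{n+1},\varphi-\mathcal{SZ}_h\varphi)$ and $E_2=(\delta_t\rho_h^{n+1},\mathcal{SZ}_h\varphi)-(\delta_t\rho_h^{n+1},\mathcal{SZ}_h\varphi)_h$. The approximation estimate \eqref{error-SZ} with $s=m=0$, $p=2$ combined with Cauchy-Schwarz controls $|E_1|\le Ch\|\delta_t\rho_h^{n+1}\|_{L^2(\Omega)}\|\varphi\|_{H^1(\Omega)}$; the quadrature bound \eqref{error-L1-L2-H1} together with \eqref{est:SZ-stability} handles $|E_2|$ by the same estimate. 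Invoking the inverse inequality \eqref{inv:H-1-L2-global} in the form $h\|\delta_t\rho_h^{n+1}\|_{L^2(\Omega)}\le C_{\rm inv}\|\delta_t\rho_h^{n+1}\|_{(H^1(\Omega))'}$, taking the supremum over $\varphi$ with $\|\varphi\|_{H^1(\Omega)}=1$, and absorbing the $(H^1)'$-contribution into the left-hand side, I arrive at $\|\delta_t\rho_h^{n+1}\|_{(H^1(\Omega))'}\le CG_n$; squaring, multiplying by $k$, and summing over $n$ yields \eqref{est:derivative}.

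The principal technical obstacle is the absorption step: a priori the aggregate of universal constants $C_{\rm app}+C_{\rm com}\,C_{\rm sta}$ multiplied by $C_{\rm inv}$ on the right need not be strictly less than $1$. Carefully tracking these constants (and, if needed, exploiting the sharper per-element quadrature bound of order $h^2\|\nabla\delta_t\rho_h^{n+1}\|_{L^2}\|\nabla\mathcal{SZ}_h\varphi\|_{L^2}$, which after a second application of the inverse chain \eqref{inv:H1-L2-global}-\eqref{inv:H-1-L2-global} again produces a $(H^1)'$-term) is what makes the absorption legitimate. Once that hurdle is cleared, the bound $k\sum_n G_n^2\le C$ from \eqref{lm5.1-est-L2} is all that is required to conclude.
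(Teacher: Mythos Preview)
Your approach is precisely the ``standard duality technique'' the paper invokes in its one-line proof, and every step up to and including the bound $|(\delta_t\rho_h^{n+1},\mathcal{SZ}_h\varphi)_h|\le G_n\|\varphi\|_{H^1(\Omega)}$ with $k\sum_n G_n^2\le C$ is correct and matches what the paper has in mind.

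The difficulty you single out at the end is genuine, and neither of your proposed fixes actually closes it. There is no reason for the product $(C_{\rm app}+C_{\rm com}C_{\rm sta})\,C_{\rm inv}$ to be strictly less than~$1$, so the absorption cannot be justified by ``carefully tracking constants''. Your alternative route via the sharper $h^2\|\nabla\delta_t\rho_h^{n+1}\|_{L^2(\Omega)}\|\nabla\mathcal{SZ}_h\varphi\|_{L^2(\Omega)}$ quadrature bound, after two applications of \eqref{inv:H1-L2-global}--\eqref{inv:H-1-L2-global}, again lands you with $C\|\delta_t\rho_h^{n+1}\|_{(H^1(\Omega))'}$ on the right with an uncontrolled constant, so nothing is gained.

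The clean way to avoid absorption altogether is not to route $E_1,E_2$ through $\|\delta_t\rho_h^{n+1}\|_{(H^1(\Omega))'}$ in the first place. Choose $\bar\rho_h=\delta_t\rho_h^{n+1}$ itself in the scheme: your own estimate then reads
\[
\|\delta_t\rho_h^{n+1}\|_h^2=(\delta_t\rho_h^{n+1},\delta_t\rho_h^{n+1})_h\le G_n\,\|\delta_t\rho_h^{n+1}\|_{H^1(\Omega)}\le \frac{C_{\rm inv}}{h}\,G_n\,\|\delta_t\rho_h^{n+1}\|_{L^2(\Omega)},
\]
and together with \eqref{Equivalence-L2} this gives $h\|\delta_t\rho_h^{n+1}\|_{L^2(\Omega)}\le C_{\rm eq}^2C_{\rm inv}\,G_n$ \emph{directly}. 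Inserting this into your bounds $|E_1|+|E_2|\le Ch\|\delta_t\rho_h^{n+1}\|_{L^2(\Omega)}\|\varphi\|_{H^1(\Omega)}$ yields $|E_1|+|E_2|\le C\,G_n\|\varphi\|_{H^1(\Omega)}$ with no term left to absorb, and taking the supremum over $\varphi$ gives $\|\delta_t\rho_h^{n+1}\|_{(H^1(\Omega))'}\le C\,G_n$ outright.
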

\begin{proof} Apply a standard duality technique to obtain  \eqref{est:derivative} from \eqref{Scheme} and \eqref{lm5.1-est-L2}.
\end{proof}
We end this section by summarizing the results of Lemmas \ref{lm:Positivity} and \ref{lm:Energy_Estimates}. Bounds \eqref{lm5.1-est-L2} and \eqref{est:derivative} yield that
\begin{equation}\label{Bound:positivity}
\rho_{h,k}, \rho_{h,k}^\pm\ge0 \quad\mbox{ in  }\quad Q,
\end{equation}
\begin{equation}\label{BoundLinfL2:rho_hk}
\{\rho_{h,k}\}_{h,k}, \{\rho_{h,k}^\pm\}_{h,k} \mbox{ are bounded in }  L^\infty(0,T; L^2(\Omega)),
\end{equation}
\begin{equation}\label{BoundL2H1:rho_hk}
\{ h^\frac{\gamma}{2} \rho_{h,k}^{+}\}_{h,k}, \{\mathcal{I}_hA([\rho_{h,k}^+]_T)\}_{h,k} \mbox{ are bounded in } L^2(0,T; H^1(\Omega)),
\end{equation}
and
\begin{equation}\label{Bound:der-rho_hk}
\{\rho_{h,k}\}_{h,k}\mbox{ is bounded in } H^1(0,T; (H^1(\Omega))')
\end{equation}
and, by passing to the limit in a subsequence, denoted by $(k,h)$ for convenience,  that there exists $\rho\in L^\infty(0,T; L^2(\Omega))$ such that
\begin{equation}\label{weak-convergence-rho}
\rho_{h,k}, \rho_{h,k}^\pm\to \rho\quad\mbox{ in }\quad L^\infty(0,T; L^2(\Omega))\mbox{-weakly}*,
\end{equation}
and
\begin{equation}\label{weak-convergence-derivative-rho}
\rho_{h,k}\to \rho\quad\mbox{ in }\quad H^1(0,T; (H^1(\Omega))')\mbox{-weakly}
\end{equation}
as $(h,k)\to (0,0)$. Moreover, there exists $\chi\in L^2(0,T, H^1(\Omega))$ such that
\begin{equation}\label{weak-convergence-A}
\mathcal{I}_h A([\rho_{h,k}^+]_T)\to \chi\quad\mbox{ in }\quad L^2(0,T; H^1(\Omega))\mbox{-weakly}
\end{equation}
as $(h,k)\to (0,0)$.

\section{Compactness}
As we are dealing with a nonlinear equation, the key ingredient in passing to the limit is obtaining  compactness of the discrete solutions computed using \eqref{Scheme}. Since we do not have control of the gradient of the discrete solutions due to the degenerate diffusion term, compactness turns out to be more complicated to achieve than in the non-degenerate case. We have split the proof into a series of four lemmas.
\begin{lemma}\label{lm:continuity_I} There exists a nonincreasing function $F_1: [0,\infty)\to [0,\infty)$ with $F_1(z)\to 0$ as $z\to 0$ such that for any sequence of  discrete solutions $\{\rho_{h,k}^+\}_{h,k}$ computed via scheme \eqref{Scheme} satisfies
\begin{equation}\label{lm:6.1-est:Omega}
\|[\rho^{+}_{h,k}]_T(t+\delta)-[\rho^{+}_{h,k}]_T(t)\|_{L^2(\Omega)}\le F_1(\|\mathcal{I}_hA([\rho^{+}_{h,k}]_T(t+\delta))-\mathcal{I}_hA([\rho^{+}_{h,k}]_T(t))\|_{L^2(\Omega)}),
\end{equation}
for all $\delta\in(0,T)$ and $t\in[0,T-\delta]$, and
\begin{align}\label{lm:6.1-est:omega}
\int_0^{T}  \int_{\omega}&|\mathcal{P}_h[\rho_{h,k}^+]_T(t,\x+\delta\boldsymbol{e}_i)-\mathcal{P}_h[\rho_{h,k}^+]_T(t,\x)|^2\dx\,\dt
\\
\nonumber
&\le F_1\left[\int_0^{T}  \int_{\omega}|\mathcal{P}_h\mathcal{I}_hA([\rho_{h,k}^+]_T(t,\x+\delta\boldsymbol{e}_i))-\mathcal{P}_h\mathcal{I}_hA([\rho_{h,k}^+]_T(t,\x))|^2\dx\,\dt \right],
\end{align}
for all $\omega \subset\subset\Omega$ and $0<\delta<{\rm dist}(\omega,\partial\Omega)$ with $\{\e_i\}_{i=1}^d$ being the Cartesian basis of $\R^d$.
\end{lemma}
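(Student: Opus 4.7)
The plan is to exploit that $A$ is continuous and strictly increasing on the bounded interval $[0,B_{L^\infty}]$ to manufacture a modulus of continuity for its inverse, and then use a level-set decomposition (a Chebyshev-type argument in the spirit of Alt--Luckhaus \cite{Alt_Luckhaus_1983}) to convert this pointwise modulus into the quantitative bounds required by \eqref{lm:6.1-est:Omega} and \eqref{lm:6.1-est:omega}.

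By (A1), $A:[0,B_{L^\infty}]\to[0,A(B_{L^\infty})]$ is a continuous bijection, so $A^{-1}$ is uniformly continuous on $[0,A(B_{L^\infty})]$. Let $\omega_A:[0,\infty)\to[0,\infty)$ be a nondecreasing modulus of continuity of $A^{-1}$ on that interval, so that $\omega_A(z)\to 0$ as $z\to 0^+$. Because $[\cdot]_T$ clips nodal values to $[0,B_{L^\infty}]$ and linear interpolation preserves pointwise bounds, both $[\rho^+_{h,k}]_T$ and its piecewise-constant projection $\mathcal{P}_h[\rho^+_{h,k}]_T$ take values in $[0,B_{L^\infty}]$. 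Moreover, since $\mathcal{P}_h$ and $\mathcal{I}_h$ both evaluate at nodes, one has the pointwise identity
\[
\mathcal{P}_h \mathcal{I}_h A([\rho^+_{h,k}]_T)(t,\x) = A\bigl(\mathcal{P}_h [\rho^+_{h,k}]_T(t,\x)\bigr)\quad \mbox{in}\quad Q.
\]

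For the space-shift estimate \eqref{lm:6.1-est:omega}, I would fix a parameter $\epsilon>0$ and split $(0,T)\times\omega$ into the ``small-increment'' set
\[
S_\epsilon=\bigl\{(t,\x):|\mathcal{P}_h\mathcal{I}_h A([\rho^+_{h,k}]_T)(t,\x+\delta\boldsymbol{e}_i)-\mathcal{P}_h\mathcal{I}_h A([\rho^+_{h,k}]_T)(t,\x)|<\epsilon\bigr\}
\]
and its complement. On $S_\epsilon$, the identity above and the modulus estimate yield $|\mathcal{P}_h[\rho^+_{h,k}]_T(t,\x+\delta\boldsymbol{e}_i)-\mathcal{P}_h[\rho^+_{h,k}]_T(t,\x)|\le \omega_A(\epsilon)$. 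On $S_\epsilon^c$, the integrand is bounded by $4B_{L^\infty}^2$, while Chebyshev's inequality controls $|S_\epsilon^c|$ by $\epsilon^{-2}$ times the right-hand integral appearing inside $F_1$ in \eqref{lm:6.1-est:omega}. Adding the two contributions produces a bound of the form $\omega_A(\epsilon)^2\,T|\Omega|+4B_{L^\infty}^2\epsilon^{-2} Y$, where $Y$ denotes that right-hand integral. Setting
\[
F_1(z)=\inf_{\epsilon>0}\bigl(\omega_A(\epsilon)^2 T|\Omega|+4B_{L^\infty}^2\epsilon^{-2}z\bigr)
\]
gives a nondecreasing function with $F_1(z)\to 0$ as $z\to 0^+$ (choosing, e.g., $\epsilon=z^{1/4}$ makes both terms tend to zero).

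For the time-shift estimate \eqref{lm:6.1-est:Omega}, I would use the norm equivalence \eqref{Equivalence-L2} to pass to the lumped inner product and then apply the same level-set decomposition at the nodal level with $u_j=[\rho^+_{h,k}]_T(t+(j-1)\delta)$: nodes $\a$ with $|A(u_1(\a))-A(u_2(\a))|<\epsilon$ contribute at most $\omega_A(\epsilon)^2|\Omega|$, while the remaining nodes contribute at most $4B_{L^\infty}^2\epsilon^{-2}\|\mathcal{I}_h A(u_1)-\mathcal{I}_h A(u_2)\|_h^2$, which is dominated by the corresponding squared $L^2$-norm via \eqref{Equivalence-L2}. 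Optimising over $\epsilon$ yields an analogous $F_1$; a common $F_1$ serving both inequalities can then be obtained by taking a pointwise maximum/envelope of the two. The main obstacle will be verifying that $F_1$ can be taken independent of $h$ and $k$: this follows because $B_{L^\infty}$ depends only on $\rho^0$, the constant $C_{\rm eq}$ in \eqref{Equivalence-L2} is uniform in $h$, and $\omega_A$ depends only on $A$ restricted to a fixed interval. A minor subtlety, and probably the place that requires the most care in the write-up, is the observation $\mathcal{P}_h\mathcal{I}_hA=A\circ\mathcal{P}_h$ together with the fact that $\mathcal{P}_h$ preserves the bound $B_{L^\infty}$ on $[\rho^+_{h,k}]_T$, both of which are direct consequences of the purely nodal nature of $\mathcal{I}_h$ and $\mathcal{P}_h$.
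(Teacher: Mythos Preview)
Your proof is correct. Both your argument and the paper's are Alt--Luckhaus level-set decompositions, but they split along different quantities. The paper thresholds on the \emph{function values}: it defines $\mathcal{N}_h^1=\{\a:\text{both nodal values}<\eta\}$, bounds the difference by $\eta$ there, and on the complement uses that the secant slope $\sigma(x,y)=(A(x)-A(y))/(x-y)$ satisfies $\sigma(x,y)\ge f(\eta):=\min\{\sigma(x,y):(x,y)\in[\eta,B_{L^\infty}]\times[0,B_{L^\infty}]\}>0$, yielding $|x-y|\le f(\eta)^{-1}|A(x)-A(y)|$. You instead threshold on the \emph{$A$-increment}, invoking the modulus of continuity $\omega_A$ of $A^{-1}$ on the sublevel set and the trivial bound $4B_{L^\infty}^2$ plus Chebyshev on the superlevel set. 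The resulting moduli differ in form---the paper's is $\inf_{\eta>0}\{C_{\rm eq}|\Omega|\eta^2+C_{\rm eq}f(\eta)^{-2}z^2\}$, yours is $\inf_{\epsilon>0}\{\omega_A(\epsilon)^2 T|\Omega|+4B_{L^\infty}^2\epsilon^{-2}z\}$---but both tend to zero and are uniform in $(h,k)$ for exactly the reasons you give. Your route is a touch more transparent, working directly with the uniform continuity of $A^{-1}$ on a compact interval; the paper's route makes more explicit quantitative use of $A'>0$ on $(0,\infty)$ from (A1). The nodal identity $\mathcal{P}_h\mathcal{I}_hA([\rho^+_{h,k}]_T)=A\circ\mathcal{P}_h[\rho^+_{h,k}]_T$ that you flag is indeed the hinge for the space-shift estimate, and the paper uses it implicitly in the same way.
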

\begin{proof} For $x\ge0$ and $y\ge0$,  define the following continuous function
$$
\sigma(x,y)=\left\{
\begin{array}{ccl}
\frac{A(x)-A(y)}{x-y}&\mbox{ if }& x\not=y,
\\
A'(x)&\mbox{ if }&x=y.
\end{array}
\right.
$$ 
Let $\eta>0$ and consider  $f(\eta)=\min\{\sigma (x,y): (x,y) \in [\eta, B_{L^\infty}]\times[0, B_{L^\infty}]\}$. 
Then we have that $f(\eta)>0$ since $A(x)'>0$ for $x>0$ owing to $\rm(A1)$. Let
$$
\mathcal{N}^1_h=\{\a\in\mathcal{N}_h: |[\rho_{h,k}^+]_T(t+\delta, \a)|<\eta\mbox{ and } |[\rho_{h,k}^+]_T(t, \a)|<\eta \}
$$
and
$\mathcal{N}^2_h=\mathcal{N}_h\backslash\mathcal{N}^1_h$. Then, from \eqref{Equivalence-L2}, we get
$$
\begin{array}{rcl}
\|[\rho_{h,k}^+]_T(t+\delta)-[\rho_{h,k}^+]_T(t)\|^2_{L^2(\Omega)}&\le& C_{\rm eq} \|[\rho_{h,k}^+]_T(t+\delta)-[\rho_{h,k}^+]_T(t)\|_h^2
\\
&=&
\displaystyle
C_{\rm eq}\sum_{\a\in\mathcal{N}^1_h} ([\rho_{h,k}^+]_T(t+\delta,\a)-[\rho_{h,k}^+]_T(t, \a))^2\int_\Omega\varphi_\a
\\
&&
\displaystyle
+C_{\rm eq}\sum_{\a\in\mathcal{N}^2_h} ([\rho_{h,k}^+]_T(t+\delta,\a)-[\rho_{h,k}^+]_T(t, \a))^2\int_\Omega\varphi_\a
\\
&\le&
\displaystyle
C_{\rm eq}\eta^2 |\Omega|
\\
&&\displaystyle
+C_{\rm eq}{f^{-2}(\eta)}\|\mathcal{I}_hA([\rho_{h,k}^+]_T(t+\delta))-\mathcal{I}_hA([\rho_{h,k}^+]_T(t))\|^2_{L^2(\Omega)}.
\end{array}
$$
Consider  $F_\mu(z)=\inf_{\eta>0} \{C_{\rm eq}|\Omega|\eta^2+C_{\rm eq}\mu{f^{-2}(\eta)}z^2\}$ with $\mu=1$ to complete the proof of \eqref{lm:6.1-est:Omega}.

For \eqref{lm:6.1-est:omega}, we reason along the same line as before. Define
$$
Q^1_{\omega}=\{ (t,\x)\in [0, T]\times\omega : |\mathcal{P}_h [\rho_{h,k}^+]_T(t,\x+\delta\e_i) |< \eta\mbox{ and } | \mathcal{P}_h[\rho_{h,k}^+]_T(t,\x) |< \eta  \}
$$
and $Q^2_{\omega}=Q\backslash Q^1_{\omega}$. Estimating as before, we find
\begin{align*}
\nonumber
\int_0^T \int_\omega& |\mathcal{P}_h [\rho_{h,k}^+]_T(t,\x+\delta\e_i)-\mathcal{P}_h[\rho_{h,k}^+]_T(t, \x)|^2
\\
\nonumber
=&\int_{Q^1_{\omega}} |\mathcal{P}_h [\rho_{h,k}^+]_T(t,\x+\delta\e_i)-\mathcal{P}_h[\rho_{h,k}^+]_T(t, \x)|^2 \dx\dt
\\
\label{lm6.1-lab3}
&+\int_{Q^2_{\omega}} |\mathcal{P}_h [\rho_{h,k}^+]_T(t,\x+\delta\e_i)-\mathcal{P}_h[\rho_{h,k}^+]_T)(t, \x)|^2 \dx\,\dt
\\
\nonumber
\le&\eta |Q|+\frac{1}{f^2(\eta)} \int_0^T \int_\omega |\mathcal{P}_h\mathcal{I}_hA([\rho_{h,k}^+]_T(t,\x+\delta\e_i))-\mathcal{P}_h\mathcal{I}_hA([\rho_{h,k}^+]_T(t, \x))|^2\dx\,\dt, \end{align*}
which implies \eqref{lm:6.1-est:omega}. 
\end{proof}
\begin{lemma}\label{lm:continuity_II} Let $\delta\in (0,T)$ and $t\in[0,T-\delta]$.  Assume that there exists $B>0$ such that the sequence of discrete solutions $\{\rho_{h,k}^+\}_{h,k}$ computed via \eqref{Scheme} satisfies
\begin{equation}\label{lm6.2-lab1}
\|\mathcal{I}_hA ([\rho_{h,k}^+]_T(t+\delta))\|_{H^1(\Omega)}\le B\quad\mbox{ and }\quad \|\nabla\mathcal{I}_hA ([\rho_{h,k}^+]_T(t))\|_{H^1(\Omega)}\le B.
\end{equation}
and
\begin{equation}\label{lm6.2-lab2}
h^\gamma\|\nabla\rho_{h,k}^+(t+\delta)\|_{L^2(\Omega)}\le B\quad\mbox{ and }\quad h^\gamma\|\nabla\rho_{h,k}^+(t))\|_{L^2(\Omega)}\le B.
\end{equation}
Then there exists a function $G_B :[0,\infty)\to [0,\infty)$ being nondecreasing and satisfying $G_B(\varepsilon)\to 0$ as $\varepsilon\to 0$ such that
$$
\|[\rho_{h,k}^+(t+\delta)]_T-[\rho_{h,k}^+(t)]_T\|^2_{L^2(\Omega)}\le G_B(\varepsilon)
$$
providing that $$
([\rho_{h,k}^+]_T(t+\delta)-[\rho_{h,k}^+]_T(t), \mathcal{I}_h A([\rho_{h,k}^+]_T(t+\delta))-\mathcal{I}_hA([\rho_{h,k}^+]_T(t)))_h\le \varepsilon
$$
holds
\end{lemma}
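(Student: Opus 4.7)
The plan is to adapt the node-splitting argument of Lemma~\ref{lm:continuity_I}, exploiting the finer structure available here: the hypothesis bounds a discrete bilinear pairing between the $[\rho_{h,k}^{+}]_T$-increment and its $A$-image, rather than the square of an $L^{2}$-norm of $\mathcal{I}_h A$-differences. Consequently the strict monotonicity of $A$ lets the auxiliary threshold function $f(\eta)$ appear only to the first power, giving a sharper modulus than the one in Lemma~\ref{lm:continuity_I}.

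First I would use \eqref{Equivalence-L2} to pass to the lumped norm and expand,
\begin{equation*}
\|[\rho_{h,k}^{+}(t+\delta)]_T-[\rho_{h,k}^{+}(t)]_T\|_{L^{2}(\Omega)}^{2}\le C_{\rm eq}^{2}\sum_{\a\in\mathcal{N}_h}\bigl([\rho_{h,k}^{+}]_T(t+\delta,\a)-[\rho_{h,k}^{+}]_T(t,\a)\bigr)^{2}\int_\Omega\varphi_\a.
\end{equation*}
Reusing the slope $\sigma(x,y)=(A(x)-A(y))/(x-y)$ and the threshold $f(\eta)=\min\{\sigma(x,y):(x,y)\in[\eta,B_{L^\infty}]\times[0,B_{L^\infty}]\}$ from the proof of Lemma~\ref{lm:continuity_I} (which is strictly positive by $\rm(A1)$), I would split $\mathcal{N}_h=\mathcal{N}_h^{1}\cup\mathcal{N}_h^{2}$ according as both nodal values satisfy $[\rho_{h,k}^{+}]_T(\cdot,\a)<\eta$ or not. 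The $\mathcal{N}_h^{1}$-sum contributes at most $\eta^{2}|\Omega|$. On $\mathcal{N}_h^{2}$ at least one of the two relevant nodal values lies in $[\eta,B_{L^\infty}]$, whence the key inequality $(x-y)^{2}\le f(\eta)^{-1}(x-y)(A(x)-A(y))$ holds. Since every summand of the discrete pairing is nonnegative by monotonicity of $A$, the $\mathcal{N}_h^{2}$-contribution is at most $f(\eta)^{-1}\varepsilon$. Altogether,
\begin{equation*}
\|[\rho_{h,k}^{+}(t+\delta)]_T-[\rho_{h,k}^{+}(t)]_T\|_{L^{2}(\Omega)}^{2}\le C_{\rm eq}^{2}\bigl(|\Omega|\eta^{2}+\varepsilon/f(\eta)\bigr).
\end{equation*}

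Finally I would set
\begin{equation*}
G_B(\varepsilon):=C_{\rm eq}^{2}\inf_{\eta>0}\bigl(|\Omega|\eta^{2}+\varepsilon/f(\eta)\bigr),
\end{equation*}
which is manifestly nondecreasing. The main obstacle is verifying $G_B(\varepsilon)\to 0$ as $\varepsilon\to 0$: because $A'$ may degenerate at $0$, $f(\eta)\to 0$ as $\eta\to 0$, so $\eta=\eta(\varepsilon)$ must be chosen to tend to zero slowly enough that $\varepsilon/f(\eta)\to 0$. Balancing $|\Omega|\eta^{2}=\varepsilon/f(\eta)$ works, because $\eta^{2}f(\eta)$ is continuous and vanishes with $\eta$, so every sufficiently small $\varepsilon$ admits such an $\eta$. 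The regularity bounds \eqref{lm6.2-lab1}--\eqref{lm6.2-lab2} do not enter this core estimate; they only fix the uniform setting in which the pairing on the right-hand side is meaningful, while $f$ itself depends on the data through $B_{L^\infty}$ and the dependence of $G_B$ on $B$ merely records this ambient context.
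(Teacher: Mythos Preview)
Your argument is correct and takes a genuinely different route from the paper. The paper proves the lemma by contradiction: assuming a sequence with pairing $\le 1/n$ but $L^2$-difference $>\kappa$, it uses the $H^1$-bound \eqref{lm6.2-lab1} to extract $L^2$-strong limits of $\mathcal{I}_{h_n}A([\rho_{h_n,k_n}^+]_T)$ via Rellich, then uses \eqref{lm6.2-lab2} together with the $\mathcal{P}_h$-operator and \eqref{error-Ph} to transfer strong convergence to $[\rho_{h_n,k_n}^+]_T$ itself, passes to the limit in the pairing (invoking \eqref{error-L1-L2-H1} to handle the lumping error), concludes $\rho_1=\rho_2$ by strict monotonicity, and derives a contradiction through Lemma~\ref{lm:continuity_I}. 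Your approach bypasses all of this: the node-wise nonnegativity of the lumped pairing lets you bound the $\mathcal{N}_h^2$-sum by $f(\eta)^{-1}\varepsilon$ directly, yielding an explicit modulus $G_B$. As you observe, the hypotheses \eqref{lm6.2-lab1}--\eqref{lm6.2-lab2} are in fact not needed for your argument (the resulting $G_B$ depends only on $B_{L^\infty}$, $|\Omega|$, and $C_{\rm eq}$), so you have actually proved a slightly stronger statement. The paper's compactness route is more in the spirit of the Alt--Luckhaus framework and makes essential use of the assumed bounds; your direct estimate is shorter and constructive. One small simplification to your closing step: to show $G_B(\varepsilon)\to 0$ you need not balance the two terms---for any fixed $\eta>0$ one has $\limsup_{\varepsilon\to 0}G_B(\varepsilon)\le C_{\rm eq}^{2}|\Omega|\eta^{2}$, and letting $\eta\to 0$ gives the conclusion immediately.
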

\begin{proof}
We establish the lemma by contradiction. Assume that there exist $\kappa>0$ and two sequences $\{\rho_{h_n,k_n}^+(t+\delta)\}_{n=1}^\infty$ and $\{\rho_{h_n,k_n}^+(t)\}_{n=1}^\infty$ such that
\begin{equation}\label{lm6.2-lab3}
([\rho_{h_n,k_n}^+]_T(t+\delta)-[\rho_{h_n,k_n}^+]_T(t), \mathcal{I}_{h_n} A([\rho_{h_n,k_n}^+]_T(t+\delta))-\mathcal{I}_{h_n}A([\rho_{h_n,k_n}^+]_T(t)))_{h_n}\le\frac{1}{n}
\end{equation}
and
\begin{equation}\label{lm6.2-lab4}
\|[\rho_{h_n,k_n}^+]_T(t+\delta)-[\rho_{h_n,k_n}^+]_T(t)\|^2_{L^2(\Omega)}>\kappa.
\end{equation}
From \eqref{lm6.2-lab1}, we know that there exist $w_1,w_2\in L^2(\Omega)$ and a subsequence of $\{[\rho_{h_n,k_n}^+]_T(t+\delta)\}_{n=0}^\infty$ and  $\{ [\rho_{h_n,k_n}^+]_T(t)\}_{n=0}^\infty$, still denoted by itself, such that
$$
\mathcal{I}_{h_n} A([\rho_{h_n,k_n}^+]_T(t+\delta)) \to A(\rho_1)\quad\mbox{ in } \quad L^2(\Omega)\quad \mbox{ as }\quad n\to \infty
$$
and
$$
\mathcal{I}_{h_n} A([\rho_{h_n,k_n}^+]_T(t)) \to A(\rho_2) \quad\mbox{ in } \quad L^2(\Omega)\quad \mbox{ as }\quad n\to \infty,
$$
where $\rho_1=A^{-1}(w_1)$ and $\rho_2=A^{-1}(w_2)$. It is not hard to see from \eqref{error-Ph} and \eqref{lm6.2-lab1} that
$$
\mathcal{P}_{h_n}\mathcal{I}_{h_n} A([\rho_{h_n,k_n}^+]_T(t+\delta)) \to A(\rho_1)\quad\mbox{ in } \quad L^2(\Omega)\quad \mbox{ as }\quad n\to \infty
$$
and
$$
\mathcal{P}_{h_n}\mathcal{I}_{h_n} A([\rho_{h_n,k_n}^+]_T(t)) \to A(\rho_2) \quad\mbox{ in } \quad L^2(\Omega)\quad \mbox{ as }\quad n\to \infty.
$$
Lebesgue's Dominated Convergence Theorem  implies that
$$
A^{-1}\mathcal{P}_{h_n} \mathcal{I}_{h_n} A([\rho_{h_n,k_n}^+]_T(t+\delta)) \to\rho_1\quad\mbox{ in }\quad L^2(\Omega)\quad\mbox{ as }\quad n\to\infty
$$
and
$$
A^{-1} \mathcal{P}_{h_n} \mathcal{I}_{h_n}A([\rho_{h_n,k_n}^+]_T(t))\to \rho_2\quad\mbox{ in }\quad L^2(\Omega)\quad\mbox{ as }\quad n\to\infty.
$$

It is clear that
$$
\mathcal{P}_{h_n}[\rho_{h_n,k_n}^+]_T(t+\delta)= A^{-1}\mathcal{P}_{h_n}\mathcal{I}_h A([\rho_{h_n,k_n}^+]_T(t+\delta))
$$
and
$$
\mathcal{P}_{h_n} [\rho_{h_n,k_n}^+]_T(t)= A^{-1}\mathcal{P}_{h_n}\mathcal{I}_{h_n} A([\rho_{h_n,k_n}^+]_T(t)).
$$
In view of \eqref{error-Ph} and \eqref{lm6.2-lab2}, we get
$$
\|[\rho_{h_n,k_n}^+]_T(t+\delta)-\mathcal{P}_h\rho_{h_n,k}^+]_T(t+\delta)\|_{L^2(\Omega)}\le C h_n \|\nabla \rho_{h_n,k_n}^+(t+\delta)\|_{L^2(\Omega)}\le C h_n^{1-\gamma} B
$$
and
$$
\|\rho_{h_n,k_n}^+(t)-\mathcal{P}_h\rho_{h_n,k_n}^+(t)\|_{L^2(\Omega)}\le C h_n \|\nabla \rho_{h_n,k_n}^+(t) \|_{L^2(\Omega)}\le C h_n^{1-\gamma} B.
$$
Here, we used the fact that $|\nabla [\rho^{+}_{h_n,k_n}]_T(\cdot)|\le |\nabla [\rho^{+}_{h_n,k_n}]_T(\cdot)| $. Therefore,
$$
[\rho_{h_n,k_n}^+]_T(t+\delta)\to\rho_1\quad\mbox{ and } \quad [\rho_{h_n,k_n}^+]_T(t)\to \rho_2\quad\mbox{ in }\quad L^2(\Omega)\quad\mbox{ as }\quad n\to \infty.
$$
On noting  \eqref{error-L1-L2-H1}, we have
$$
\begin{array}{l}
([\rho_{h_n,k_n}^+]_T(t+\delta)-[\rho_{h_n,k_n}^+]_T(t), \mathcal{I}_{h_n} A([\rho_{h_n,k_n}^+]_T(t+\delta))-\mathcal{I}_{h_n}A([\rho_{h_n,k_n}^+]_T(t)))
\\\displaystyle
\le\frac{1}{n}+ C h_n \|[\rho_{h_n,k_n}^+]_T(t+\delta)-[\rho_{h_n,k_n}^+]_T(t)\|_{L^2(\Omega)}\|\nabla( \mathcal{I}_{h_n} (A([\rho_{h_n,k_n}^+]_T(t+\delta))-A([\rho_{h_n,k_n}^+]_T(t))))\|_{L^2(\Omega)}
\\
\le\displaystyle
\frac{1}{n}+C |\Omega|^\frac{1}{2} h_n B_{L^\infty} B.
\end{array}
$$
Passing to the limit in this last estimate yields
$$
(\rho_1-\rho_2, A(\rho_1)-A(\rho_1))=0,
$$
which implies that $\rho_1=\rho_2$. As a result, we have $\|\mathcal{I}_{h_n} A([\rho_{h_n,k_n}]_T(t+\delta))-\mathcal{I}_hA([\rho_{h_n,k_n}]_T(t))\|_{L^2(\Omega)}\to 0$ as $n\to +\infty$. But then $\|[\rho_{h_n,k_n}]_T(t+\delta)-[\rho_{h_n,k_n}]_T(t)\|_{L^2(\Omega)}\to 0$ as $n\to +\infty$ from Lemma \ref{lm:continuity_I}, which is a contradiction from \eqref{lm6.2-lab4}.
\end{proof}

In order to prove the following lemma, we draw on \cite[Prop. 27]{GG_GS_2008}.
\begin{lemma} Let $\delta\in(0,T)$ and $t\in[0,T-\delta]$. Then it follows that
\begin{equation}\label{lm:6.3-bound}
\int_0^{T-\delta}([\rho_{h,k}^+]_T(t+\delta)-[\rho_{h,k}]_T^+(t),  A([\rho_{h,k}^+]_T(t+\delta))- A([\rho_{h,k}^+]_T(t)))_h\, \dt\le C \delta.
\end{equation}
\end{lemma}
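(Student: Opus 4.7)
The plan is a time-translation estimate à la Alt--Luckhaus, adapted to the mass-lumped, semi-implicit setting, in three steps; the main obstacle lies in the first.

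First, I reduce the left-hand side to a duality pairing against a fixed $D_h$-valued test function. Since $\rho_{h,k}^+ \geq 0$ by Lemma~\ref{lm:Positivity} and the map $s \mapsto A([s]_T)$ is monotone nondecreasing on $[0,\infty)$, a short nodewise case analysis (distinguishing whether $x,y$ lie above or below $B_{L^\infty}$) yields $([x]_T - [y]_T)(A([x]_T) - A([y]_T)) \leq (x-y)(A([x]_T) - A([y]_T))$ for $x,y\geq 0$. Since $(\cdot,\cdot)_h$ sees only nodal values, summing with weights $\int_\Omega\varphi_\a$ produces
\begin{equation*}
([\rho_{h,k}^+]_T(t+\delta)-[\rho_{h,k}^+]_T(t),\,A([\rho_{h,k}^+]_T(t+\delta))-A([\rho_{h,k}^+]_T(t)))_h \le (\rho_{h,k}^+(t+\delta)-\rho_{h,k}^+(t),\,\bar\rho_h(t))_h,
\end{equation*}
with $\bar\rho_h(t) := \mathcal{I}_h A([\rho_{h,k}^+]_T(t+\delta)) - \mathcal{I}_h A([\rho_{h,k}^+]_T(t)) \in D_h$. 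This truncation reduction is the subtle point: the mass-lumped inner product forces every manipulation of $A\circ[\cdot]_T$ to be done nodewise, and it is the combination of $\rho_{h,k}^+\geq 0$ with the monotonicity of $A\circ[\cdot]_T$ that makes the reduction work.

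Second, I express the discrete time increment as a temporal integral of $\partial_s\rho_{h,k}$ and insert the scheme. Writing $\tau_k(t)=t_{n+1}$ for $t\in(t_n,t_{n+1}]$, the piecewise-linear interpolant $\rho_{h,k}$ agrees with $\rho_{h,k}^+$ at grid points, hence $\rho_{h,k}^+(t+\delta)-\rho_{h,k}^+(t)=\int_{\tau_k(t)}^{\tau_k(t+\delta)}\partial_s\rho_{h,k}(s)\,ds$ with $\tau_k(t+\delta)-\tau_k(t)\le \delta+k$. Testing~\eqref{Scheme} at the step containing $s$ with the $s$-independent test function $\bar\rho_h(t)\in D_h$ yields $(\partial_s\rho_{h,k}(s),\bar\rho_h(t))_h=-\mathcal{R}(s,t)$, where
\begin{equation*}
\mathcal{R}(s,t):=h^\gamma(\nabla\rho_{h,k}^+(s),\nabla\bar\rho_h(t))+(\nabla\mathcal{I}_h A([\rho_{h,k}^+(s)]_T),\nabla\bar\rho_h(t))-(\rho_{h,k}^+(s)\nabla\mathcal{I}_h(K*[\rho_{h,k}^-(s)]_T),\nabla\bar\rho_h(t)).
\end{equation*}

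Third, I close the estimate via Cauchy--Schwarz and the energy bounds. Setting $F(s):=h^\gamma\|\nabla\rho_{h,k}^+(s)\|_{L^2(\Omega)}+\|\nabla\mathcal{I}_h A([\rho_{h,k}^+(s)]_T)\|_{L^2(\Omega)}+C\|\rho_{h,k}^+(s)\|_{L^2(\Omega)}$, where I have used $\|\nabla\mathcal{I}_h(K*[\rho_{h,k}^-(s)]_T)\|_{L^\infty(\Omega)}\le C\|K\|_{W^{1,\infty}(\R^d)}\|\rho_{h,k}^-(s)\|_{L^1(\Omega)}\le C$ by \eqref{sta_Ih_H1-global} and \eqref{lm5.1-est-L1}, one has $|\mathcal{R}(s,t)|\le F(s)\|\nabla\bar\rho_h(t)\|_{L^2(\Omega)}$. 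The bounds \eqref{BoundLinfL2:rho_hk}--\eqref{BoundL2H1:rho_hk} provide $\int_0^T F(s)^2\,ds\le C$ (the prefactor $h^\gamma$ absorbs the $h^{-\gamma}$ implicit in $\|\nabla\rho_{h,k}^+\|_{L^2(0,T;L^2(\Omega))}^2$) and $\int_0^{T-\delta}\|\nabla\bar\rho_h(t)\|_{L^2(\Omega)}^2\,dt\le C$. Two Cauchy--Schwarz applications---inner in $s$ using $\tau_k(t+\delta)-\tau_k(t)\le \delta+k$, outer in $t$---together with Fubini deliver the bound $|I|\le C(\delta+k)\le 2C\delta$ when $\delta\ge k$; in the complementary regime $\delta<k$, where the left-hand side is supported on subintervals of length $\delta$ inside each time step with constant value $M_n$, a direct application of Step~1 and the scheme on each $M_n$ followed by a single discrete Cauchy--Schwarz in $n$ yields $\int_0^{T-\delta}\mathcal{L}(t)\,dt=\delta\sum_n M_n\le C\delta$.
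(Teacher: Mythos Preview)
Your argument is correct and follows the same overall Alt--Luckhaus strategy as the paper: reduce the truncated increment to the untruncated one via nodewise monotonicity, test the scheme \eqref{Scheme} with $\bar\rho_h=\mathcal{I}_hA([\rho_h^{m+r}]_T)-\mathcal{I}_hA([\rho_h^m]_T)$, and close by Cauchy--Schwarz together with the energy bounds \eqref{lm5.1-est-L2}. The differences are organisational rather than conceptual, but two of them are worth recording.

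First, you work in continuous time (writing $\rho_{h,k}^+(t+\delta)-\rho_{h,k}^+(t)=\int_{\tau_k(t)}^{\tau_k(t+\delta)}\partial_s\rho_{h,k}\,ds$ and applying Cauchy--Schwarz in $s$ and in $t$, with Fubini on the resulting double integral $\int\!\!\int F(s)^2\,ds\,dt$), whereas the paper works entirely with discrete sums, swaps the $n$- and $m$-sums via a discrete Fubini rule, and applies Cauchy--Schwarz only in the inner index. Your ordering (Fubini \emph{after} both Cauchy--Schwarz steps) extracts an additional factor $(\delta+k)^{1/2}$ and delivers the bound $C(\delta+k)\le 2C\delta$ as stated; the paper's ordering yields only $C(rk)^{1/2}=C\delta^{1/2}$, which is what their proof actually establishes despite the statement reading $C\delta$. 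Both suffice for the compactness argument in Lemma~\ref{Lm:compactaness}, but your estimate matches the stated exponent.

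Second, you treat the regime $\delta<k$ by a direct computation (the integrand vanishes except on intervals of length $\delta$ straddling grid points, and one discrete Cauchy--Schwarz in $n$ finishes), while the paper dispenses with this case by asserting that it suffices to take $\delta=rk$ since $\rho_{h,k}^+$ is piecewise constant in time. Your explicit treatment is cleaner and avoids having to justify that reduction.
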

\begin{proof} Since $\rho_{h,k}^+$ is a time--stepping function, we only need to consider $\delta= r k$, with $r=1,\cdots, N$, and prove
$$
k\sum_{m=0}^{N-r} ([\rho_h^{m+r}]_T-[\rho_h^m]_T,  A([\rho_h^{m+r}]_T)- A([\rho_h^m]_T))_h\, \dt\le C (r k)^\frac{1}{2}.
$$
Let us test \eqref{Scheme} against $\bar\rho_h=\mathcal{I}_hA([\rho_h^{m+r}]_T)-\mathcal{I}_hA([\rho_h^m]_T)$ to obtain
\begin{align*}
(\rho^{n+1}_h-\rho_h^n&, A([\rho_h^{m+r}]_T)-A([\rho_h^m]_T))_h
\\
&=-h^\gamma k (\nabla \rho^{n+1}_h,\nabla\mathcal{I}_hA([\rho_h^{m+r}]_T)-\nabla\mathcal{I}_hA([\rho_h^m]_T))
\\
&-k(\nabla \mathcal{I}_h A([\rho_h^{n+1}]_T), \nabla \mathcal{I}_hA([\rho_h^{m+r}]_T)-\nabla \mathcal{I}_hA([\rho_h^m]_T))
\\
&+ k (\rho^{n+1}_h\nabla\mathcal{I}_h (K*[\rho_h^n]_T), \nabla \mathcal{I}_hA([\rho_h^{m+r}]_T)-\nabla\mathcal{I}_hA([\rho_h^m]_T).
\end{align*}
Summing for $n = m, ..., m - 1 + r$, multiplying by $k$ and summing for $m = 0, \cdots, N - r$  yields
\begin{align*}
 k\sum_{n=0}^{N-r}(\rho^{m+r}_h-\rho_h^m,& A([\rho_h^{m+r}]_T)-A([\rho_h^m]_T))_h
 \\
& =-h^\gamma k\sum_{n=0}^{N-r}k\sum_{n=m}^{m-1+r} (\nabla \rho^{n+1}_h,\nabla\mathcal{I}_hA([\rho_h^{m+r}]_T)-\nabla\mathcal{I}_hA([\rho_h^m]_T))
\\
&- k\sum_{n=0}^{N-r}k\sum_{n=m}^{m-1+r}(\nabla \mathcal{I}_h A([\rho_h^{n+1}]_T), \nabla \mathcal{I}_hA([\rho_h^{m+r}]_T)-\nabla \mathcal{I}_hA([\rho_h^m]_T))
\\
&+ k\sum_{n=0}^{N-r}k\sum_{n=m}^{m-1+r} (\rho^{n+1}_h\nabla \mathcal{I}_h(A*[\rho_h^n]_T), \nabla \mathcal{I}_hA([\rho_h^{m+r}]_T)-\nabla\mathcal{I}_hA([\rho_h^m]_T).
\end{align*}
We now proceed to bound each term on the right-hand side. In doing so, we first apply a Fubini discrete rule to write
$$
\begin{array}{l}
\displaystyle
h^\gamma k\sum_{n=0}^{N-r}k\sum_{n=m}^{m-1+r} (\nabla \rho^{n+1}_h,\nabla\mathcal{I}_hA([\rho_h^{m+r}]_T)-\nabla\mathcal{I}_hA([\rho_h^m]_T))
\\
\displaystyle
=h^\gamma k\sum_{n=0}^{N-1}k\sum_{m=\overline{n-1+r}}^{\bar n} (\nabla \rho^{n+1}_h,\nabla\mathcal{I}_hA([\rho_h^{m+r}]_T)-\nabla\mathcal{I}_hA([\rho_h^m]_T)),
\end{array}
$$
where
$$
\bar n=\left\{
\begin{array}{lcl}
0&\mbox{ if }& n<0,
\\
n&\mbox{ if }& 0\le n \le N-r,
\\
N-r&\mbox{ if }& n>N-r.
\end{array}
\right.
$$
Therefore, using $|\bar n-\overline{n-r+1}|\le r$, we have, by \eqref{lm5.1-est-L2}, that
$$
\begin{array}{l}
\displaystyle
h^\gamma k\sum_{n=0}^{N-r}k\sum_{n=m}^{m-1+r} (\nabla \rho^{n+1}_h,\nabla\mathcal{I}_hA([\rho_h^{m+r}]_T)-\nabla\mathcal{I}_hA([\rho_h^m]_T))
\\
\displaystyle
\le h^{\gamma^{\frac{1}{2}}} k\sum_{n=0}^{N-r} h^{\gamma^\frac{1}{2}} k \|\nabla \rho^{n+1}_h\|_{L^2(\Omega)} \left(\sum_{m=\overline{n-1+r}}^{\bar n} k\|\nabla(\mathcal{I}_hA([\rho_h^{m+r}]_T)-\mathcal{I}_hA([\rho_h^m]_T))\|^2_{L^2(\Omega)}\right)^\frac{1}{2} \left(\sum_{m=\overline{n-1+r}}^{\bar n} k\right)^\frac{1}{2}
\\
\le C B^2_{L^2}  T^\frac{1}{2} (rk)^\frac{1}{2}.
\end{array}
$$
Analogously, we bound
$$
k\sum_{n=0}^{N-r}k\sum_{n=m}^{m-1+r}(\nabla\mathcal{I}_hA([\rho_h^{n+1}]_T),\mathcal{I}_hA([\rho_h^{m+r}]_T)-\mathcal{I}_hA([\rho_h^m]_T))\le C T^\frac{1}{2} B^2_{L^2} (rk)^\frac{1}{2}
$$
and
\begin{align*}
k\sum_{n=0}^{N-r}k\sum_{n=m}^{m-1+r}& (\rho^{n+1}_h\nabla \mathcal{I}_h(K*[\rho_h^n]_T), \nabla \mathcal{I}_hA([\rho_h^{m+r}]_T)-\mathcal{I}_hA([\rho_h^m]_T)
\\
&\le C T \|K\|_{W^{2,\infty}(\R^d)} \|\rho_h^0\|_{L^1(\Omega)} B^2_{L^2} (rk)^\frac{1}{2}.
\end{align*}
Combining these above estimates gives
$$
k\sum_{n=0}^{N-r}(\rho^{m+r}_h-\rho_h^m, A([\rho_h^{m+r}]_T)-A([\rho_h^m]_T))_h\le C(rk)^\frac{1}{2}.
$$
The proof is now completed on noting that $|[\rho^{m+r}_h]_T(\a)-[\rho_h^m(\a)]_T|\le |\rho^{m+r}_h(\a)-\rho_h^m(\a)|$ for all $\a\in\mathcal{N}_h$.
\end{proof}
In order to set out that  the sequence of $\{[\rho_{h,k}^+]_T\}_{h,k}$ is precompact in $L^2(Q)$, we will use the Riesz-Fréchet-Kolmogorov compactness criterion.
\begin{lemma}\label{Lm:compactaness} It follows that
\begin{equation}\label{strong-convergence-rho-I}
[\rho_{h,k}^+]_T\to \rho_T\mbox{ in } L^2(Q)\mbox{-strongly as } (h,k)\to (0,0),
\end{equation}
where $\rho_T$ is the truncating of the limiting function $\rho$ obtained from the weak convergences.
\end{lemma}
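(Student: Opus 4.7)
The plan is to apply a Riesz--Fr\'echet--Kolmogorov compactness criterion to the family $\{[\rho_{h,k}^+]_T\}_{h,k}$ in $L^2(Q)$ and then identify the strong limit with $\rho_T$. The uniform $L^2(Q)$-bound is immediate, because $0\le[\rho_{h,k}^+]_T\le B_{L^\infty}$ pointwise by definition of the nodal truncation. For the space equicontinuity I would start from \eqref{lm:6.1-est:omega}: the space shifts of $\mathcal{P}_h\mathcal{I}_hA([\rho_{h,k}^+]_T)$ are controlled by $\delta\,\|\nabla\mathcal{I}_hA([\rho_{h,k}^+]_T)\|_{L^2(0,T;L^2(\Omega))}$ up to an $O(h)$ discrepancy coming from \eqref{error-Ph}, and this gradient is uniformly bounded by \eqref{BoundL2H1:rho_hk}. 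Since $F_1(z)\to 0$ as $z\to 0$, the shifts of $\mathcal{P}_h[\rho_{h,k}^+]_T$ vanish uniformly as $\delta\to 0$. One then passes from $\mathcal{P}_h[\rho_{h,k}^+]_T$ back to $[\rho_{h,k}^+]_T$ via \eqref{error-Ph}, incurring an $O(h^{1-\gamma/2})$ error that vanishes since $\gamma<1$ (the nodal truncation is $1$-Lipschitz, hence $|\nabla[\rho_{h,k}^+]_T|\le|\nabla\rho_{h,k}^+|$ pointwise, so the $h^\gamma$-weighted gradient bound in \eqref{BoundL2H1:rho_hk} applies to $[\rho_{h,k}^+]_T$ as well).

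The time equicontinuity is the main obstacle, and is where Lemmas 6.2 and 6.3 must be combined. Lemma 6.3 supplies the integrated bound $\int_0^{T-\delta}\varepsilon(t)\,\dt\le C\delta$, where $\varepsilon(t)$ denotes the mixed scalar product from Lemma 6.2; that latter lemma would then give, pointwise in $t$, $\|[\rho_{h,k}^+]_T(t+\delta)-[\rho_{h,k}^+]_T(t)\|_{L^2(\Omega)}^2\le G_M(\varepsilon(t))$, but only under pointwise-in-$t$ $H^1$-bounds that the $L^2(0,T;H^1(\Omega))$-estimate \eqref{BoundL2H1:rho_hk} does not furnish. To bridge this gap I would deploy a three-parameter decomposition in $M,\eta,\delta$: first cut out the exceptional set $E_M\subset[0,T-\delta]$ where any of the relevant pointwise $H^1$-norms exceed $M$; by Chebyshev and \eqref{BoundL2H1:rho_hk}, $|E_M|\le C/M^2$, and the contribution over $E_M$ is trivially controlled by $2B_{L^\infty}^2|\Omega|\cdot|E_M|$. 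On $E_M^c$ apply Lemma 6.2 with $B=M$ and split further into $\{\varepsilon(t)<\eta\}$ (on which $G_M(\varepsilon(t))\le G_M(\eta)$ is uniformly small) and $\{\varepsilon(t)\ge\eta\}$ (whose measure is at most $C\delta/\eta$ by Lemma 6.3, while $G_M$ stays bounded by some constant depending only on $M$). Choosing $M$ large first, then $\eta$ small, then $\delta$ much smaller than $\eta$ delivers the required uniform time-equicontinuity.

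Precompactness in $L^2(Q)$ then yields, along a subsequence, $[\rho_{h,k}^+]_T\to w$ strongly in $L^2(Q)$ and pointwise a.e. It remains to identify $w=\rho_T$. I would combine this with the weak convergences \eqref{weak-convergence-rho}--\eqref{weak-convergence-A}: the strong convergence and continuity of $A$ on $[0,B_{L^\infty}]$ force $\mathcal{I}_hA([\rho_{h,k}^+]_T)\to A(w)$ strongly in every $L^p(Q)$, hence $\chi=A(w)$ almost everywhere; passing to the limit in the scheme then produces the weak formulation of \eqref{PDE} with $\chi$ in place of $A(\rho)$, and the continuum $L^\infty$-bound $\rho\le B_{L^\infty}$ (which holds for the weak solution by construction of the constant $B_{L^\infty}$) together with the strict monotonicity of $A$ from $\rm(A1)$ pins down $w=\rho$, so that $w=[\rho]_T=\rho_T$.
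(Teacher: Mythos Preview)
Your compactness argument is essentially the paper's. Both handle space translates via \eqref{lm:6.1-est:omega}, bounding the shifts of $\mathcal{P}_h\mathcal{I}_hA([\rho_{h,k}^+]_T)$ by the $L^2(0,T;H^1)$ gradient estimate and passing between $\mathcal{P}_h[\rho_{h,k}^+]_T$ and $[\rho_{h,k}^+]_T$ with an $O(h^{1-\gamma})$ error from \eqref{error-Ph}; the $h$-dependent residuals are then absorbed by the perturbed Riesz--Fr\'echet--Kolmogorov criterion of \cite{Arzela_GG}, exactly as in the paper. For time translates both combine Lemma~\ref{lm:continuity_II} with \eqref{lm:6.3-bound} through Chebyshev: the paper folds all thresholds into one set $E_\theta(\delta)$ with a single parameter $\theta$, whereas you separate the $H^1$-threshold $M$ from the $\varepsilon$-threshold $\eta$; the two organizations are equivalent, and your claim that $G_M$ stays bounded is harmless since one may always cap $G_M$ by the trivial bound $4B_{L^\infty}^2|\Omega|$.

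Where you diverge is in identifying the strong limit $w$ with $\rho_T=[\rho]_T$. Your route---pass to the limit in the scheme, invoke $\rho\le B_{L^\infty}$ for the weak solution, then use strict monotonicity of $A$---is circular: the bound $\rho\le B_{L^\infty}$ is only proved in Lemma~7.1 \emph{from} the truncated limit equation, which itself needs the strong convergence you are establishing, and even after that bound the step ``strict monotonicity of $A$ pins down $w=\rho$'' is not justified (you have $\chi=A(w)$ but no second relation giving $\chi=A(\rho)$). The paper's ``not hard to see'' presumably intends the following direct argument: at every node one has the complementarity $(\rho_{h,k}^+-[\rho_{h,k}^+]_T)(B_{L^\infty}-[\rho_{h,k}^+]_T)=0$, hence $(B_{L^\infty}-[\rho_{h,k}^+]_T,\rho_{h,k}^+-[\rho_{h,k}^+]_T)_h=0$; by \eqref{error-L1-L2-H1} and \eqref{BoundL2H1:rho_hk} the mass-lumping error is $O(h^{1-\gamma/2})$, so the exact inner product tends to zero in $L^1(0,T)$. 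Since $B_{L^\infty}-[\rho_{h,k}^+]_T\to B_{L^\infty}-w$ strongly and $\rho_{h,k}^+-[\rho_{h,k}^+]_T\to\rho-w$ weakly, one obtains $\int_Q(B_{L^\infty}-w)(\rho-w)=0$ with both factors nonnegative, whence $w=\min(\rho,B_{L^\infty})=\rho_T$ a.e.
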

\begin{proof} The proof should be understood for the subsequence obtained in \eqref{weak-convergence-rho}  and  \eqref{weak-convergence-derivative-rho}.  We divide the proof into two parts:

{\bf Part I}: We claim that for each $\varepsilon>0$ there exists $0<\delta_0\le T$ such that for all $(h,k)>0$ and all $0 < \delta < \delta_0$
\begin{equation}\label{lm6.4-lab1}
\int_0^{T-\varepsilon}  \|[\rho_{h,k}^+]_T(\cdot+\delta)-[\rho_{h,k}]_T^+(\cdot)\|^2_{L^2(\Omega)}\dt< \varepsilon.
\end{equation}

By Lemma \ref{lm:Energy_Estimates}, we know that
$$
\|A([\rho_{h,k}^+]_T)\|_{L^2(0,T; H^1(\Omega))}\le ( B_{L^\infty}^2 |\Omega|+ B_{L^2}^2)^{\frac{1}{2}}:=B.
$$
Consider $0<\delta<\varepsilon$ and $\theta>1$ and define
\begin{align*}
E_\theta(\delta)=\Big\{t\in [0,T-\varepsilon]:&\|A([\rho_{h,k}^+]_T(t))\|_{H^1(\Omega)}\le B \theta^{\frac{1}{2}}, \quad\|A([\rho_{h,k}^+]_T(t+\delta))\|^2_{H^1(\Omega)}\le B \theta^{\frac{1}{2}},
\\
&h^{\gamma}\|\nabla\rho_{h,k}^+(t)\|_{L^2(\Omega)}\le B\theta^\frac{1}{2},\quad h^{\gamma}\|\nabla\rho_{h,k}^+(t+\delta)\|^2_{L^2(\Omega)}\le B\theta^\frac{1}{2},
\\
&([\rho_{h,k}^+]_T(t+\delta)-[\rho_{h,k}^+]_T(t), A([\rho_{h,k}^+]_T(t+\delta))-A([\rho_{h,k}^+]_T(t)))_h\le C  \theta \delta\Big\}.
\end{align*}
By Chebyshev's inequality, we deduce that $|E^c_\theta(\delta)|\le \frac{5}{\theta}$, where $E^c_\theta(\delta)$ is the complementary set of $E_\theta(\delta)$. Therefore, by Lemma \ref{lm:continuity_II} combined with \eqref{lm:6.3-bound},
$$
\int_0^{T-\varepsilon}  \|[\rho_{h,k}^+]_T(\cdot+\delta)-[\rho_{h,k}^+]_T(\cdot)\|^2_{L^2(\Omega)}\dt\le T G_{B \theta^\frac{1}{2}}(C\theta\delta)+2B^2_{L^\infty}\frac{5}{\theta}.
$$
On choosing $\theta=\max\{\frac{20B^2_{L^\infty}}{\varepsilon}, 1\}$ and $\delta_0>0$ such that $T G_{ B \theta^\frac{1}{2}}(C\theta\delta)<\frac{\varepsilon}{2}$, this leads to \eqref{lm6.4-lab1}.

{\bf Part II}: We claim that for each $\varepsilon>0$ and each $\omega\subset\subset\Omega$ there exists $0<\delta_0< {\rm dist}(\omega,\partial\Omega)$ such that
\begin{equation}\label{lm6.4-lab2}
\int_0^{T}  \int_{\omega}|[\rho_{h,k}^+]_T(t,\x+\delta\boldsymbol{e}_i))-[\rho_{h,k}^+]_T(t,\x))|^2\dx\,\dt< \varepsilon,
\end{equation}
for all $(h,k)>0$, and all $0 < \delta \le \delta_0$ and $i=1,\cdots, d$.

Using Minkowski's inequality, we have
\begin{align*}
\nonumber
\int_0^T \int_\omega |[\rho_{h,k}^+]_T(t,\x+\delta\e_i)&-[\rho_{h,k}^+]_T(t, \x)|^2\dx\,\dt
\\
&\le C  \int_0^T  \|[\rho_{h,k}^+]_T-\mathcal{P}_h[\rho_{h,k}^+]_T\|^2_{L^2(\Omega)} \dt
\\
\nonumber
&+ C \int_0^T \int_\omega |\mathcal{P}_h [\rho_{h,k}^+]_T(t,\x+\delta\e_i)-\mathcal{P}_h[\rho_{h,k}^+]_T(t, \x)|^2\dx\,dt.
\end{align*}
We estimate each term on the right-hand side separately.  We have, by \eqref{error-Ph} and \eqref{lm5.1-est-L2},  that
\begin{equation*}
\int_0^T  \|[\rho_{h,k}^+]_T-\mathcal{P}_h[\rho_{h,k}^+]_T\|^2_{L^2(\Omega)} \dt\le C B_{L^2}^2 h^{1-\gamma},
\end{equation*}
where we have used that that fact $|\nabla[\rho_h]_T|\le |\nabla\rho_h|$ for all $\rho_h\in D_h$.

Now we want to use \eqref{lm:6.1-est:omega} to control the second term. Observe that
\begin{align*}
\nonumber
\int_0^T \int_\omega &|\mathcal{P}_h\mathcal{I}_h A([\rho_{h,k}^+]_T(t,\x+\delta\e_i))-\mathcal{P}_h\mathcal{I}_h A([\rho_{h,k}^+]_T(t, \x))|^2_{L^2(\Omega)}\dx\,\dt
\\
\le& C  \int_0^T  \|\mathcal{I}_h A([\rho_{h,k}^+]_T)-\mathcal{P}_h\mathcal{I}_h A([\rho_{h,k}^+]_T)\|^2_{L^2(\Omega)} \dt
\\
\nonumber
&+ C \int_0^T \int_\omega |\mathcal{I}_h A ([\rho_{h,k}^+]_T(t,\x+\delta\e_i))-\mathcal{I}_h A([\rho_{h,k}^+]_T(t, \x))|^2\dx\,dt.
\end{align*}
It is easily to check, from \eqref{error-Ph}, that
$$
\int_0^T  \|\mathcal{I}_h A([\rho_{h,k}^+]_T)-\mathcal{P}_h\mathcal{I}_h A([\rho_{h,k}^+]_T)\|^2_{L^2(\Omega)} \dt\le C B_{L^2}^2 h^2
$$
and, by the Mean-Value Theorem,  that
\begin{align*}
\int_0^{T}  \int_{\omega}|\mathcal{I}_hA([\rho_{h,k}^+]_T(t,\x+\delta\boldsymbol{e}_i)&-\mathcal{I}_hA([\rho_{h,k}^+]_T(t,\x))|^2\dx\,\dt
\\
&\le \delta^2 \int_0^T \|\nabla\mathcal{I}_hA([\rho_{h,k}^+(t)]_T)\|^2_{L^2(\Omega)}\dt\le\delta^2 B^2_{L^2}.
\end{align*}
Thus, by Lemma \ref{lm:continuity_I},
$$
\int_0^T \int_\omega |\mathcal{P}_h [\rho_{h,k}^+]_T(t,\x+\delta\e_i)-\mathcal{P}_h[\rho_{h,k}^+]_T(t, \x)|^2\dx\,\dt\le F_1( B^2_{L^2} (C h^2+\delta^2)).
$$

Therefore,
$$
\int_0^{T}  \int_{\omega}|\rho_{h,k}^+(t,\x+\delta\boldsymbol{e}_i)-\rho_{h,k}^+(t,\x)|^2\dx\,\dt\le C B_{L^2} h^{1-\gamma}+ F_2(\delta^2 B^2_{L^2})+F_2(C B^2_{L^2}  h^2). $$
Following the proof of \cite[Thm. 5.1]{Arzela_GG} we infer that \eqref{lm6.4-lab2} holds.

Finally, inequalities \eqref{lm6.4-lab1} and \eqref{lm6.4-lab2} are sufficient to prove that the sequence of $\{[\rho_{h,k}]_T\}_{h,k}$ is precompact via the Riesz-Fréchet-Kolmogorov compactness criterion. It is not hard to see that the limiting function $\rho_T$ is the truncating of $\rho$.
\end{proof}
We further infer that 
\begin{equation}\label{strong-convergence-rho-II}
[\rho_{h,k}^-]_T\to \rho_T\mbox{ in } L^2(Q)\mbox{-strongly as }\quad (h,k)\to (0,0).
\end{equation}

As a consequence of Lemma \ref{Lm:compactaness}, we have the following.
\begin{corollary} There holds
\begin{equation}\label{weak-convergence-A(rho)}
\mathcal{I}_hA([\rho_{h,k}^+]_T)\to A(\rho_T)\quad\mbox{ in }\quad L^2(0,T; H^1(\Omega))\mbox{-weakly as }\quad (h,k)\to (0,0).
\end{equation}
\end{corollary}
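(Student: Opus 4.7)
From \eqref{weak-convergence-A} we already know that, extracting a subsequence if necessary, $\mathcal{I}_h A([\rho_{h,k}^+]_T)\rightharpoonup \chi$ in $L^2(0,T;H^1(\Omega))$ for some $\chi$. The plan is to identify $\chi$ with $A(\rho_T)$ by upgrading to strong $L^2(Q)$ convergence to $A(\rho_T)$ and invoking uniqueness of the weak limit. This in turn is achieved by splitting
\begin{equation*}
\mathcal{I}_h A([\rho_{h,k}^+]_T)-A(\rho_T)
=\bigl(\mathcal{I}_h A([\rho_{h,k}^+]_T)-A([\rho_{h,k}^+]_T)\bigr)
+\bigl(A([\rho_{h,k}^+]_T)-A(\rho_T)\bigr),
\end{equation*}
and showing that each bracket tends to zero in $L^2(Q)$.

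For the second bracket, I will use the strong $L^2(Q)$ convergence $[\rho_{h,k}^+]_T\to\rho_T$ granted by Lemma \ref{Lm:compactaness}. Passing to a further subsequence that converges a.e.\ in $Q$, the continuity of $A$ from $\rm(A1)$ yields $A([\rho_{h,k}^+]_T)\to A(\rho_T)$ a.e. Since $[\rho_{h,k}^+]_T$ takes values in $[0,B_{L^\infty}]$ and $A$ is continuous on this compact interval, $A([\rho_{h,k}^+]_T)$ is uniformly bounded in $L^\infty(Q)$ by $A(B_{L^\infty})$, so Lebesgue's dominated convergence theorem gives the claim.

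The first bracket is the more delicate step. The classical interpolation estimate \eqref{error-Linf-W1inf_W2inf} requires $W^{2,\infty}$ regularity, which $A([\rho_{h,k}^+]_T)$ need not have, so I will instead exploit the Lipschitz continuity of $A_T$ together with the affineness of $[\rho_{h,k}^+]_T$ on every $E\in\mathcal{E}_h$. Writing any $\x\in E$ as $\x=\sum_i\lambda_i\a_i$ with $\lambda_i\ge 0$, $\sum_i\lambda_i=1$, and denoting $\rho_h=[\rho_{h,k}^+]_T$, one has
\begin{equation*}
A(\rho_h(\x))-\mathcal{I}_h A(\rho_h)(\x)
=\sum_{i=0}^d\lambda_i\bigl[A(\rho_h(\x))-A(\rho_h(\a_i))\bigr],
\end{equation*}
and the Lipschitz constant $C_{\rm Lip}$ of $A_T$ yields $\|A(\rho_h)-\mathcal{I}_h A(\rho_h)\|_{L^\infty(E)}\le C_{\rm Lip}\,h_E\|\nabla \rho_h\|_{L^\infty(E)}$. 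Standard inverse estimates on the element then upgrade this to the $L^2$-bound $\|\mathcal{I}_h A(\rho_h)-A(\rho_h)\|_{L^2(\Omega)}\le C\,h\,\|\nabla \rho_h\|_{L^2(\Omega)}$. Using $|\nabla[\rho_{h,k}^+]_T|\le|\nabla\rho_{h,k}^+|$ and the energy bound \eqref{BoundL2H1:rho_hk} on $h^{\gamma/2}\rho_{h,k}^+$ in $L^2(0,T;H^1(\Omega))$, integration in time gives
\begin{equation*}
\|\mathcal{I}_h A([\rho_{h,k}^+]_T)-A([\rho_{h,k}^+]_T)\|_{L^2(0,T;L^2(\Omega))}\le C\,h^{1-\gamma/2}\to 0
\end{equation*}
since $\gamma<1$.

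Combining the two steps, $\mathcal{I}_h A([\rho_{h,k}^+]_T)\to A(\rho_T)$ strongly in $L^2(Q)$; uniqueness of the weak $L^2(Q)$ limit forces $\chi=A(\rho_T)$, and because every subsequence extracted from the bounded family in $L^2(0,T;H^1(\Omega))$ has the same weak limit, the full sequence converges weakly to $A(\rho_T)$ in $L^2(0,T;H^1(\Omega))$. The main obstacle is the interpolation-type step above: without $W^{2,\infty}$-regularity one has to argue directly from the Lipschitz property of $A_T$ on $[0,B_{L^\infty}]$ and absorb the resulting factor $\|\nabla\rho_{h,k}^+\|_{L^2}$, which blows up like $h^{-\gamma/2}$, against the gain of an extra power of $h$; this is precisely where the constraint $\gamma<1$ is used.
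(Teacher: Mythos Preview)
Your argument is correct and follows the same overall strategy as the paper: establish strong $L^2(Q)$ convergence of $\mathcal{I}_hA([\rho_{h,k}^+]_T)$ to $A(\rho_T)$, then use uniqueness of weak limits together with \eqref{weak-convergence-A} to identify $\chi=A(\rho_T)$. The treatment of $A([\rho_{h,k}^+]_T)-A(\rho_T)$ via dominated convergence is exactly as in the paper.

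The one genuine difference is in how you bound the interpolation defect $\mathcal{I}_hA([\rho_{h,k}^+]_T)-A([\rho_{h,k}^+]_T)$. The paper inserts the piecewise-constant operator $\mathcal{P}_h$ and writes a three-term splitting, using \eqref{error-Ph} for $\mathcal{I}_hA-\mathcal{P}_h\mathcal{I}_hA$ and then the identity $\mathcal{P}_h\mathcal{I}_hA(\rho_h)=\mathcal{P}_hA(\rho_h)$ together with a mean-value argument for $\mathcal{P}_hA(\rho_h)-A(\rho_h)$, arriving at the bound $Ch^{2-\gamma}$ after using $|\nabla A([\rho_{h,k}^+]_T)|\le \|A'\|_{L^\infty([0,B_{L^\infty}])}|\nabla[\rho_{h,k}^+]_T|$. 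Your barycentric-coordinate argument bypasses $\mathcal{P}_h$ entirely: because $[\rho_{h,k}^+]_T$ is affine on each element, you get directly $\|A(\rho_h)-\mathcal{I}_hA(\rho_h)\|_{L^\infty(E)}\le C_{\rm Lip}\,h_E\,\|\nabla\rho_h\|_{L^\infty(E)}$, and since $\nabla\rho_h$ is constant on $E$ this immediately gives the $L^2$-bound (what you call an ``inverse estimate'' is really just $|E|^{1/2}\|\nabla\rho_h\|_{L^\infty(E)}=\|\nabla\rho_h\|_{L^2(E)}$). Your route is shorter and more elementary; the paper's route has the advantage of reusing the already-stated tool \eqref{error-Ph}. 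Both lead to the same decisive estimate $Ch^{2-\gamma}\to 0$, which is where the restriction $\gamma<1$ enters.
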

\begin{proof} Using Minkowski's inequality a few times, we see that
$$
\begin{array}{rcl}
\displaystyle
\int_0^T\|\mathcal{I}_hA([\rho_{h,k}^+]_T)-A(\rho_T)\|^2_{L^2(\Omega)}\dt&\le&\displaystyle
C \int_0^T\|\mathcal{I}_hA([\rho_{h,k}^+]_T)-\mathcal{P}_h\mathcal{I}_hA([\rho_{h,k}^+]_T)\|^2_{L^2(\Omega)}\dt
\\
&&\displaystyle
+C\int_0^T\|\mathcal{P}_h\mathcal{I}_hA([\rho_{h,k}^+]_T)-A([\rho_{h,k}^+]_T)\|^2_{L^2(\Omega)}\dt
\\
&&\displaystyle
+C \int_0^T\|A([\rho_{h,k}^+]_T)-A(\rho_T)\|^2_{L^2(\Omega)}\dt.
\end{array}
$$
In view of \eqref{error-Ph} and \eqref{BoundL2H1:rho_hk}, we obtain
$$
\int_0^T\|\mathcal{I}_hA([\rho_{h,k}^+]_T)-\mathcal{P}_h\mathcal{I}_hA([\rho_{h,k}^+]_T)\|^2_{L^2(\Omega)}\dt \le C\,h^2 \int_0^T\| \nabla\mathcal{I}_hA([\rho_{h,k}^+]_T)\|^2_{L^2(\Omega)}\dt \to 0
$$
and
$$
\begin{array}{rcl}
\displaystyle
\int_0^T\|\mathcal{P}_h\mathcal{I}_hA([\rho_{h,k}^+]_T)-A([\rho_{h,k}^+]_T)\|^2_{L^2(\Omega)}\dt&=&\displaystyle
\int_0^T\|\mathcal{P}_hA([\rho_{h,k}^+]_T)-A([\rho_{h,k}^+]_T)\|^2_{L^2(\Omega)}\dt
\\
&\le&\displaystyle
C h^2 \int_0^T \|\nabla A([\rho_{h,k}^+]_T)\|^2_{L^2(\Omega)}\dt
\\
&\le&\displaystyle
C h^2 \|A'([\rho_{h,k}^+]_T)\|_{L^\infty(Q)} \int_0^T \|\nabla [\rho_{h,k}^+]_T\|^2_{L^2(\Omega)}\dt
\\
&\le&\displaystyle
C h^{2-\gamma} A'(B_{L^\infty}) \int_0^T h^\gamma \|\nabla\rho_{h,k}^+\|^2_{L^2(\Omega)}\dt\to 0
\end{array}
$$
as $(h,k)\to (0,0)$. Lebesgue's Dominated Convergence
Theorem combined with \eqref{strong-convergence-rho-I} provides
$$
\int_0^T\|A([\rho_{h,k}^+]_T)-A(\rho_T)\|^2_{L^2(\Omega)}\dt\to 0
$$
as $(h,k)\to (0,0)$. Thus, the above convergence gives
$$
\mathcal{I}_hA([\rho_{h,k}^+]_T)\to A(\rho_T)\quad\mbox{ in }\quad L^2(Q)\mbox{-strongly as }\quad (h,k)\to (0,0).
$$
Furthermore, it follows from \eqref{weak-convergence-A} that \eqref{strong-convergence-rho-I} is satisfied; thus completing the proof.
\end{proof}
\section{Passage to the limit }\label{sec:Passage}
We briefly outline the main steps of the passage to the limit since the arguments are quite standard.

Let $\bar\rho\in L^2(0,T;W^{1,\infty}(\Omega))$. We know that $\mathcal{SZ}_h\bar\rho\to \bar\rho$ in $L^2(0,T;W^{1,\infty}(\Omega))$ as $h\to 0$ from \eqref{error-SZ}. Then  selecting $\bar\rho_h=\mathcal{SZ}_h\bar\rho$ in \eqref{Scheme}, multiplying by $k$, and summing over $n$ yields
\begin{align*}
&\int_0^T (\partial_t\rho_{h,k},\mathcal{SZ}_h\bar\rho)_h \dt+ \int_0^T h^\gamma (\nabla\rho_{h,k}^+,\nabla \mathcal{SZ}_h\bar\rho) \dt
\\
&+\int_0^T (\nabla\mathcal{I}_h  A([\rho_{k,h}^+]_T),\nabla \mathcal{SZ}_h\bar\rho)\dt-\int_0^T(\rho_{h,k}^+\nabla \mathcal{I}_h((K*[\rho_{h,k}^-]_T),\nabla\mathcal{SZ}_h\bar\rho)\,\dt=0.
\end{align*}
\begin{itemize}
\item For the time derivative, we have:
$$
\int_0^T (\partial_t\rho_{h,k},\mathcal{SZ}_h\rho)_h\dt=\displaystyle
\int_0^T [(\partial_t\rho_{k,h},\mathcal{SZ}_h\rho)_h-(\partial_t\rho_{h,k},\mathcal{SZ}_h\rho)]\dt+\int_0^T(\partial_t\rho_{h,k},\mathcal{SZ}_h\rho)\dt.
$$
It is clear from \eqref{weak-convergence-derivative-rho}  that
$$
\int_0^T(\partial_t\rho_{h,k}, \mathcal{SZ}_h\rho)\dt\to \int_0^T<\partial_t\rho,\bar\rho>\dt.
$$
To control the residual term, we use \eqref{error-L1-H-1-Linf} combined with \eqref{Bound:der-rho_hk} to see
\begin{align*}
\int_0^T& [(\delta_t\rho^{n+1}_h,\mathcal{SZ}_h\rho)_h-(\delta_t\rho^{n+1}_h,\mathcal{SZ}_h\rho)]\dt
\\
&\le C h^{\frac{1}{2}} \left(\int_0^T \|\partial_t\rho_{h,k}\|^2_{(H^1(\Omega))'}\dt\right)^{\frac{1}{2}} \left(\int_0^T\|\nabla\mathcal{SZ}_h\rho\|_{L^\infty(\Omega)}^2\dt\right)^\frac{1}{2} \to 0.
\end{align*}
\item For the dissipation terms, we have by \eqref{BoundL2H1:rho_hk} and \eqref{weak-convergence-A(rho)} that
$$
h^\gamma\int_0^T (\nabla\rho_{h,k}^+,\nabla\mathcal{SZ}_h\bar\rho)\dt\to0
$$
and
$$
\int_0^T (\nabla \mathcal{I}_hA([\rho_{h,k}^+]_T),\nabla\mathcal{SZ}_h\bar\rho)\dt\to\int_0^T (\nabla \mathcal{I}_hA(\rho_T),\nabla\bar\rho)\dt
$$
\item For the convolution term, we proceed as follows.
$$
\begin{array}{rcl}
\displaystyle
\int_0^T(\rho_{h,k}^+\nabla\mathcal{I}_h( K*[\rho_{h,k}^-]_T),\nabla\mathcal{SZ}_h\bar\rho)\dt&=&\displaystyle
\int_0^T(\rho_{h,k}^+\nabla  (\mathcal{I}_h-\mathcal{I})(K*[\rho_{h,k}^-]_T,)\nabla\mathcal{SZ}_h\bar\rho)\dt
\\
&&+\displaystyle
\int_0^T(\rho_{h,k}^+\nabla K*[\rho_{h,k}^-]_T,\nabla\mathcal{SZ}_h\bar\rho)\dt.
\end{array}
$$
For the first term, we have
\begin{align*}
\int_0^T&(\rho_{h,k}^+\nabla (\mathcal{I}_h-\mathcal{I})(K*[\rho_{h,k}^-]_T),\nabla\mathcal{SZ}_h\bar\rho)\,\dt
\\
&\le C h  \|\nabla^2 K\|_{L^{\infty}(\R^d)} \|\rho^0_h\|_{L^1(\Omega)}  \left(\int_0^T \|\rho_{h,k}^+\|^2_{L^2(\Omega)}\dt\right)^\frac{1}{2} \left(\int_0^T\|\nabla\mathcal{SZ}_h\rho\|_{L^2(\Omega)}^2\dt\right)^\frac{1}{2}\to 0.
\end{align*}
For the second term, we apply \eqref{strong-convergence-rho-II} to show
$$
\int_0^T\|\nabla K*[\rho_{h,k}^-]_T-K*\rho_T\|^2_{L^\infty(\Omega)}\dt \le \|\nabla K\|^2_{L^\infty(\R^d)} \int_0^T \|[\rho_{h,k}^-]_T-\rho_T\|^2_{L^1(\Omega)}\dt \to 0,
$$
which implies on recalling \eqref{weak-convergence-rho}  that
$$
\int_0^T(\rho_{h,k}^+\nabla K*[\rho_{h,k}^-]_T,\nabla\mathcal{SZ}_h\bar\rho)\dt\to \int_0^T(\rho\nabla K*\rho_T,\nabla\mathcal{SZ}_h\bar\rho)\,\dt.
$$
Therefore,
$$
\int_0^T(\rho_{h,k}^+\nabla\mathcal{I}_h( K*[\rho_{h,k}^-]_T),\nabla\mathcal{SZ}_h\bar\rho)\dt\to \int_0^T(\rho\nabla K*\rho_T,\nabla\mathcal{SZ}_h\bar\rho)\,\dt.
$$
as $(h,k)\to (0,0)$.
\end{itemize}

The continuous assimilation of the initial datum is ensured by the compact embedding $H^1(0,T; (H^1(\Omega))')$ into $C([0,T]; (H^1(\Omega))')$ and \eqref{initial-convergences}. Moreover, one can show that $\rho(t)\to \rho^0$ in $L^p(\Omega)$ as $t\to 0$. For more details, see \cite[pp. 1627]{Bertozzi_Slepcev_2010}.

Since $\bar\rho\in L^2(0,T, W^{1,\infty}(\Omega))$ is dense in $L^2(0,T; H^1(\Omega))$, we have found $\rho: \overline{Q}\to [0,\infty)$ such that
$$\rho\in L^\infty(0,T; L^2(\Omega))\cap H^1(0,T; (H^1(\Omega))'), $$
$$\int_\Omega \rho(t)\dx=\int_\Omega \rho_0\dx\quad\mbox{ for all}\quad t\in[0,T], $$
and
\begin{equation}\label{PDE-L2H-1-trunc}
\left\{
\begin{array}{rcccl}
\partial_t\rho-\Delta  A(\rho_T) +\nabla\cdot (\rho\nabla K*\rho_T)&=&0&\mbox{ in }& L^2(0,T; (H^1(\Omega))'),
\\
\rho(0)&=&\rho_0&\mbox{ in }& (H^1(\Omega))'.
\end{array}
\right.
\end{equation}
To complete with the proof of Theorem \ref{Thm:Main} we show the equivalence of problems \eqref{PDE-L2H-1} and \eqref{PDE-L2H-1-trunc}.
\begin{lemma} Problems \eqref{PDE-L2H-1} and \eqref{PDE-L2H-1-trunc} are equivalent.
\end{lemma}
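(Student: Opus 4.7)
The equivalence reduces to establishing $0 \le \rho \le B_{L^\infty}$ a.e.\ in $Q$: once this is known, $\rho_T = \rho$ a.e., so \eqref{PDE-L2H-1-trunc} and \eqref{PDE-L2H-1} are identical weak formulations. Non-negativity $\rho \ge 0$ is immediate from the discrete non-negativity \eqref{Bound:positivity} together with the weak-$*$ convergence \eqref{weak-convergence-rho} in $L^\infty(0,T;L^2(\Omega))$, since the cone of a.e.\ non-negative functions is weakly-$*$ closed.

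The upper bound $\rho \le B_{L^\infty}$ is obtained by a Stampacchia-type truncation applied to \eqref{PDE-L2H-1-trunc}, in the spirit of \cite{Bertozzi_Slepcev_2010}. The equation is tested against $\phi = (\rho - B_{L^\infty})^+$ and integrated from $0$ to $t$. Three observations make the argument go through: (i) since $\rho^0 \le \|\rho^0\|_{L^\infty(\Omega)} \le B_{L^\infty}$, the initial contribution $\|(\rho^0 - B_{L^\infty})^+\|_{L^2(\Omega)}^2$ vanishes; (ii) the diffusion term is identically zero, for $\nabla(\rho-B_{L^\infty})^+$ is supported on $\{\rho > B_{L^\infty}\}$, where $\rho_T \equiv B_{L^\infty}$ is constant and hence $\nabla A(\rho_T)=0$; (iii) writing $\rho = (\rho-B_{L^\infty})^+ + B_{L^\infty}$ on $\{\rho > B_{L^\infty}\}$ and integrating by parts, the convection term produces boundary integrals of favourable sign since $\nabla K\ast \rho_T \cdot \n \le 0$ on $\partial\Omega$ (the proof of Lemma~4.1 applies verbatim with $\rho_T \ge 0$ in place of $[\rho_h^n]_T$), so they can be dropped, while the interior contributions are bounded by a constant times $\|\Delta K\|_{L^\infty(\R^d)}\|\rho_T\|_{L^1(\Omega)}\|(\rho-B_{L^\infty})^+\|_{L^2(\Omega)}^2$ using the mass conservation $\|\rho_T\|_{L^1(\Omega)} \le \|\rho\|_{L^1(\Omega)} = \|\rho^0\|_{L^1(\Omega)}$. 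Assembling (i)--(iii) yields
\begin{equation*}
\|(\rho(t) - B_{L^\infty})^+\|_{L^2(\Omega)}^2 \le C\|\Delta K\|_{L^\infty(\R^d)}\|\rho^0\|_{L^1(\Omega)}\int_0^t\|(\rho(s)-B_{L^\infty})^+\|_{L^2(\Omega)}^2\,\ds,
\end{equation*}
so Gronwall forces $(\rho - B_{L^\infty})^+\equiv 0$; combined with $\rho \ge 0$, this gives $\rho_T = \rho$.

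The main obstacle is the justification of the test function: \emph{a priori} $\rho \notin L^2(0,T;H^1(\Omega))$ --- only $A(\rho_T) \in L^2(0,T;H^1(\Omega))$ is available. This is circumvented by replacing $(\rho-B_{L^\infty})^+$ by a smooth convex approximation $\eta_\varepsilon(\rho - B_{L^\infty})$ with $\eta_\varepsilon \in C^1$, $\eta_\varepsilon \equiv 0$ on $(-\infty,0]$, and by time-mollifying $\rho$ using $\partial_t\rho \in L^2(0,T;(H^1(\Omega))')$, before sending the regularization parameter to zero. The key point is that every term containing $\nabla\rho$ factors through $\nabla A(\rho_T)$ on the sets where it is nonzero, so no global $L^2$-bound on $\nabla\rho$ is ever needed; this is precisely the renormalization step used for the continuous problem in \cite{Bertozzi_Slepcev_2010}, which transfers directly to our setting.
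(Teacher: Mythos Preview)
Your step (iii) contains a genuine error that breaks the argument. When you decompose $\rho = (\rho - B_{L^\infty})^+ + B_{L^\infty}$ on $\{\rho > B_{L^\infty}\}$ and integrate the convection term by parts, the piece coming from the constant $B_{L^\infty}$ is
\[
B_{L^\infty}\,(\nabla K*\rho_T,\nabla\phi)
= -\,B_{L^\infty}\,(\Delta K*\rho_T,\phi) + \text{(boundary term)}\,,
\qquad \phi:=(\rho-B_{L^\infty})^+,
\]
and the interior contribution is bounded only by $B_{L^\infty}\,\|\Delta K\|_{L^\infty}\|\rho_0\|_{L^1}\,\|\phi\|_{L^1}$, which is \emph{linear} in $\phi$, not quadratic. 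Your asserted inequality therefore reads
\[
\frac{d}{dt}\|\phi(t)\|_{L^2(\Omega)}^2 \le C\,\|\phi(t)\|_{L^2(\Omega)}^2 + C'\,\|\phi(t)\|_{L^2(\Omega)},
\]
and Gronwall from $\phi(0)=0$ does \emph{not} force $\phi\equiv 0$ (e.g.\ $y(t)=t^2$ solves $y'=2\sqrt{y}$ with $y(0)=0$).

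The paper avoids precisely this difficulty by comparing $\rho$ not with the constant $B_{L^\infty}$ but with the time-dependent barrier $\rho_{\rm aux}(t)=e^{\,t\,\|\Delta K\|_{L^\infty}\|\rho_0\|_{L^1}}\|\rho_0\|_{L^\infty}$ and testing the difference against $\widetilde\rho=(\rho_{\rm aux}-\rho)_-$. The point is that $\partial_t\rho_{\rm aux}=\|\Delta K\|_{L^\infty}\|\rho_0\|_{L^1}\,\rho_{\rm aux}$ produces an extra term that exactly absorbs the linear contribution $-\rho_{\rm aux}(\Delta K*\rho_T,\widetilde\rho)$ coming from the constant-in-space part of the convection, so the resulting differential inequality is purely quadratic in $\widetilde\rho$ and Gronwall yields $\widetilde\rho\equiv 0$, i.e.\ $\rho\le\rho_{\rm aux}\le B_{L^\infty}$. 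Your reduction of the problem, the treatment of the diffusion term, and the remarks on justifying the test function are all fine; the fix is simply to replace $B_{L^\infty}$ by $\rho_{\rm aux}(t)$ in your truncation.
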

\begin{proof} At this point the only thing we need to show is that $\rho$ defined by \eqref{PDE-L2H-1-trunc} satisfies $\rho\le B_{L^\infty}$ in $Q$. Indeed, define  $\rho_{\rm aux}= e^{ t\|\Delta K\|_{L^{\infty}(\mathds{R}^d)}\|\rho_{0}\|_{L^1(\Omega)}}\|\rho_0\|_{L^\infty(\Omega)}$ for $t\in[0,T]$, and observe that, by \eqref{Kxn},
\begin{equation}\label{lm7.1-lab1}
(\partial_t \rho_{\rm aux}, \bar\rho)-(\rho_{\rm aux}\nabla K*\rho_T,\nabla\bar\rho)\le0
\end{equation}
holds for all $\bar\rho\in H^1(\Omega)$ with $\bar\rho\le0$. Substracting \eqref{PDE-L2H-1-trunc} from \eqref{lm7.1-lab1}, and testing the resulting equation against $\widetilde\rho=(\rho_{\rm aux}-\rho)_-\in H^1(\Omega)$ yields
$$
\frac{1}{2}\frac{d}{dt}\|\widetilde\rho\|^2_{L^2(\Omega)}-(\nabla A_T(\rho), \nabla\widetilde\rho)-(\widetilde\rho\nabla K*\rho_T, \nabla\widetilde\rho)\le 0,
$$
or equivalently,
$$
\frac{1}{2}\frac{d}{dt}\|\widetilde\rho\|^2_{L^2(\Omega)}+\|(A^{'}_T)^\frac{1}{2}(\rho)\nabla\widetilde\rho\|^2_{L^2(\Omega)}-(\widetilde\rho\nabla K*\rho_T, \nabla\widetilde\rho)\le 0.
$$
It follows again from \eqref{Kxn} and integration by parts that
$$
\frac{d}{dt}\|\widetilde\rho\|^2_{L^2(\Omega)}\le \|\Delta K\|_{L^\infty(\R^d)} \|\rho_0\|_{L^1(\Omega)} \|\widetilde\rho\|^2_{L^2(\Omega)}
$$
and so $\rho\le \rho_{\rm aux}\le B_{L^\infty}$ by Grönwall's lemma. Therefore, $\rho_T=\rho$.
\end{proof}

\section{Simulation of aggregation phenomena}

In this section we illustrate how scheme \eqref{Scheme} can be used to approximate the unique 
weak solution to \eqref{PDE} with \eqref{BC}-\eqref{IC}. Moreover, we compare our numerical solution 
to that computed in \cite[Sect. 3.4, Ex. 8]{Carrillo_Chertock_Huang_2015}.

\subsection{Computational performance} At this point we shall make two comments regarding scheme~ \eqref{Scheme}. 
Firstly, we need not use the truncating operator $[\cdot]_T$ to compute $A([\rho^{n+1}_h]_T)$ and $K*[\rho_h^n]_T$ because  the discrete approximations are non-negative and the unique weak solution being approximated is not expected to blow up. Furthermore the convolution term $K*\rho^{n}_h$ cannot be exactly computed at the nodes in order to construct its nodal interpolation, so a quadrature formula must be utilized on simplexes. 

Then, our numerical method remains as:  Given $\rho^n_h\in D_h$, compute $\rho^{n+1}_h\in D_h$ satisfying
\begin{equation}\label{Scheme_II}
(\delta_t\rho^{n+1}_h,\bar\rho_h)_h+ h^\gamma (\nabla\rho^{n+1}_h,\nabla\bar\rho_h)+(\nabla\mathcal{I}_h  A(\rho^{n+1}_h),\nabla\bar\rho_h)-(\rho^{n+1}_h\nabla \mathcal{I}_h( \mathcal{Q}_1(K*\rho^n_h))),\nabla\bar\rho_h)=0,
\end{equation}
where $\mathcal{Q}_1$ is the midpoint quadrature formula. The term $(\delta_t\rho^{n+1}_h,\bar\rho_h)_h$ can be computed by using a closed-nodal quadrature formula, and the term $\mathcal{I}_h(\mathcal{Q}_1(K*\rho^n_h))$ can be rewritten as follows. Let $\a\in \mathcal{N}_h$, then
\begin{equation}\label{Quadrature}
Q_1(K*\rho^n_h)(\a)=\int_\Omega \widetilde{\mathcal{P}}_h(K(\a-\y) \rho^{n}_h(\y)) d\y=\sum_{E\in\mathcal{E}_h} K(\a-\b_E) |E|,
\end{equation}
where $\widetilde{\mathcal{P}}_h$ is a piecewise constant interpolation taking its value on each $E\in\mathcal{E}_h$ at the barycenter $\b_E$.

We see no obstacle to analyzing algorithm \eqref{Scheme_II} using \eqref{Quadrature} and the truncating $[\cdot]$ in $A(\cdot)$ as well, but we did not consider such a formulation in our analysis because it is tedious.

Scheme \eqref{Scheme}, and its modification \eqref{Scheme_II}, require the solution of  nonlinear algebraic systems at each time step, which can be approximately solved using fixed-point iterations. In doing so, we first observe that $A(\rho^{n+1}_h)=\mathcal{D}(\rho^{n+1}_h)\nabla \rho^{n+1}_h$, where $\mathcal{D}(\rho^{n+1}_h)$ is a piecewise constant, $d\times d$ diagonal matrix function over the mesh $\mathcal{T}_h$ constructed as follows. Let $E\in\mathcal{E}_h$ and consider $\tilde E\subset E$ to be a right simplex (see Figure~\ref{fig:right-angle-triangle}) with vertices
$\{ \widetilde{\boldsymbol{a}}_j\}_{j=0,\cdots,d}$ with $\widetilde{\boldsymbol{a}}_0$ supporting
the right angle. Then
$$
[\mathcal{D}(\bar\rho_{h})|_E]_{jj}=
\left\{
\begin{array}{cl}
  \dfrac{A(\bar\rho_{h}(\boldsymbol{a}_j))-A(\bar\rho_{h}(\boldsymbol{a}_0))}{\bar\rho_{h}(\boldsymbol{a}_j)-\bar\rho_{h}(\boldsymbol{a}_0)}
  & \hbox{if }  \bar\rho_{h}(\boldsymbol{a}_j)-\bar\rho_{h}(\boldsymbol{a}_0)\not=0,
  \\
  0 & \hbox{if }  \bar\rho_{h}(\boldsymbol{a}_j)-\bar\rho_{h}(\boldsymbol{a}_0)=0.
\end{array}
\right.
$$
Particularly, we choose $\widetilde{\boldsymbol{a}}_0$  to be the incenter of $E$. Thus, we take $\widetilde{\boldsymbol{a}}_i=\widetilde{\boldsymbol{a}}_0+ \frac{r_E}{2} \e_i$, where $r_E$ is the inradius of the inscribed ball.

\begin{figure}
  \centering
  \includegraphics[width=0.25\linewidth]{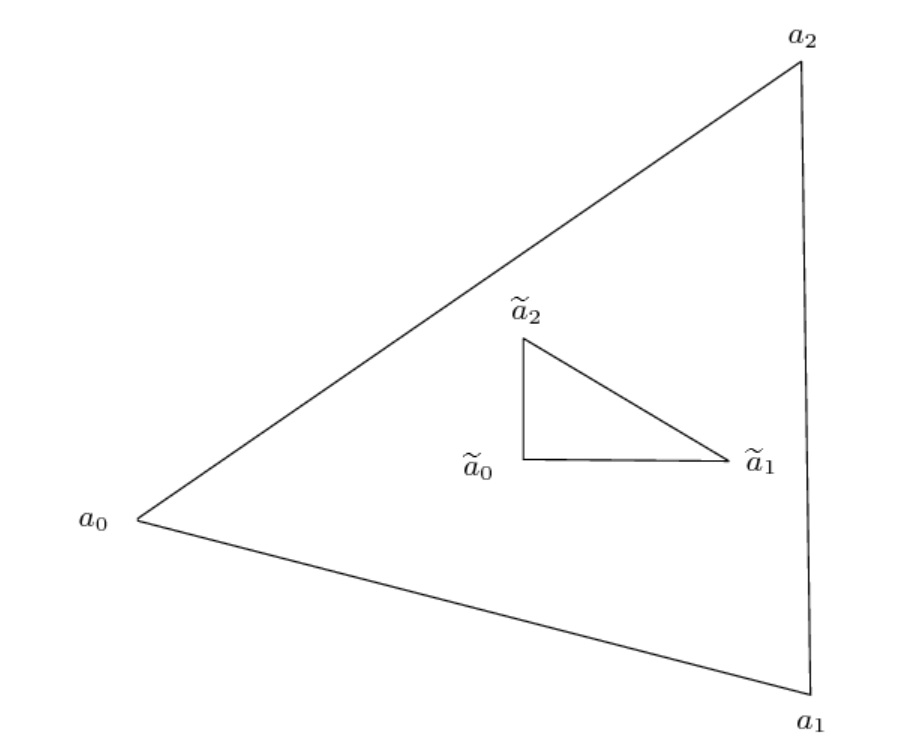}
  \caption{Interior right triangle used for computing $\mathcal{D}(n_{h})$}
  \label{fig:right-angle-triangle}
\end{figure}

We linearize as follows. For $i=0$, select
$\rho^{n+1}_{h,i}=\rho^{n}_h$, then compute $\rho^{n+1}_{h,i+1}$ using
in~(\ref{Scheme_II}) the expression
\begin{align*}
(\rho^{n+1}_{h,i+1},\bar\rho_h)_h+ h^\gamma (\nabla\rho^{n+1}_{h,i+1},\nabla\bar\rho_h)+&(\mathcal{D}(\rho^{n+1}_{h,i})\nabla \rho^{n+1}_{h,i+1},\nabla\bar\rho_h)
\\
&-(\rho^{n+1}_{h,i+1}\nabla \mathcal{I}_h( \mathcal{Q}_1(K*\rho^n_h))) =(\rho^{n}_{h},\bar\rho_h)_h.
\end{align*}
As a stopping criterion for the iterations, we choose  $\|\rho^{n+1}_{h,i+1}-\rho^{n+1}_{h,i}\|_{L^2(\Omega)}<\emph{tol}$, with $tol$ being the prescribed tolerance.

Finally, the computation of \eqref{Quadrature} for each $\a\in\mathcal{N}_h$ constitutes the bottleneck in running scheme \eqref{Scheme_II}. To make it possible in an acceptable amount of time, a parallel procedure on a high-performance cluster can be invoked since all the nodes $\a\in\mathcal{N}_h$ are independent of each other.

\subsection{A numerical experiment} As the domain we take the square $\overline\Omega = [-4,4]^2\subset\R^2$.  The evolution starts from the initial datum $\rho^0=\frac{1}{4}\,\chi_{[-3,3]^2}$ being a rescaled characteristic function supported in the square $[-3,3]^2$, which is shown in Figure \eqref{fig:initial_datum}.  The local repulsion term is chosen as $A(\rho)=\frac{\nu}{m}\rho^m$ with $\nu=0.1$ and $m=3$, and the kernel is set as $K(\boldsymbol{x})=\exp(-|\boldsymbol{x}|^2)/\pi$.
\begin{figure}
   \centering
   \includegraphics[width=0.33\linewidth, height=0.3\linewidth]{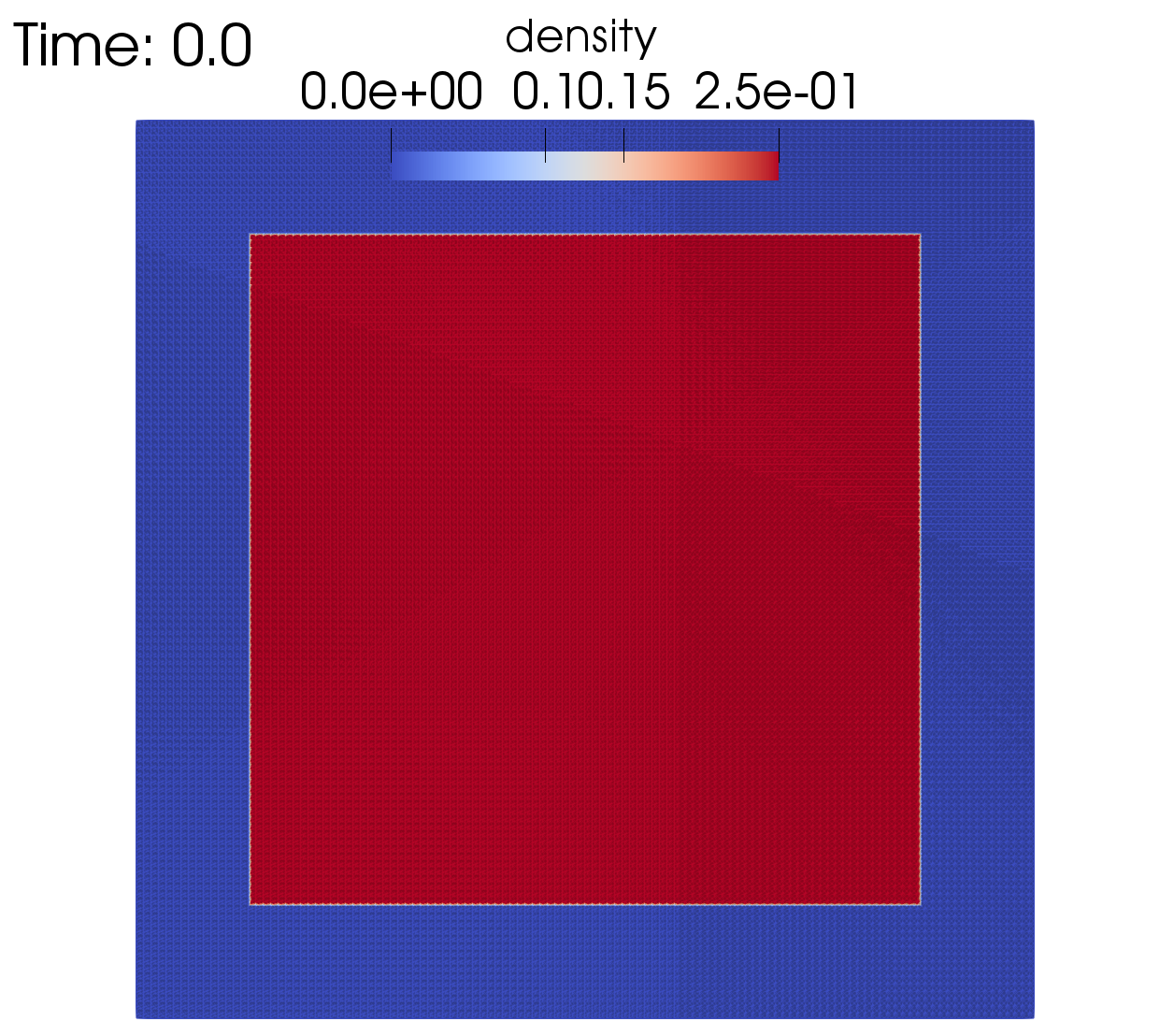}
  \caption{The initial condition $\rho^0_h$.}
  \label{fig:initial_datum}
\end{figure}
From an $N_{\rm square}\times N_{\rm square}$ uniform grid, obtained by dividing $\Omega$ into macroelements consisting of squares, we construct the mesh $\mathcal{T}_h$ by splitting each macroelement  into 14 acute triangles as indicated in Figure~\ref{fig:acute_macrolement}.
\begin{figure}
  \centering
  \includegraphics[width=0.33\linewidth, height=0.3\linewidth]{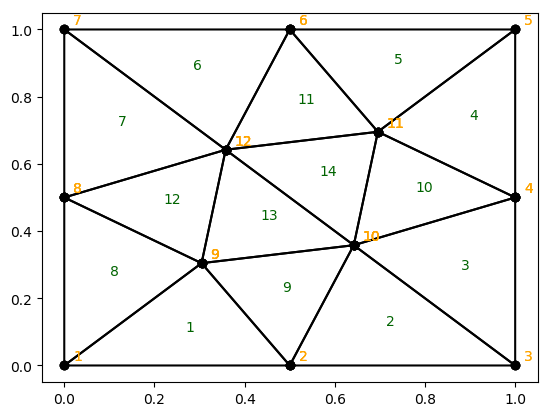}
  \caption{Reference macrolement, composed of 14 acute triangles}
  \label{fig:acute_macrolement}
\end{figure}
This way, for $N_{\rm square}=120$, we define a mesh consisting of $201600$ acute triangles and $101281$ vertices with $h=8/(2 N_\mathbf{x})\simeq 0.033$. For the time discretization, we carry out $1500$ iterations with time step $k=10^{-1}$. Moreover we select $\gamma=0.99$ to be as large as possible in orden to reduce the impact of the stabilizing term, which has a smoothing effect on the dynamics of aggregation phenomena. It should be noticed that $(h,k)$ do not fulfill \eqref{Restriction_II-global} what makes us believe that such a restriction is superfluous. We also performed some numerical tests with $k=10^{-2}$, obtaining quite similar results, which are omitted for brevity.

Specifically, we run our test on a machine with $16$ Intel Xeon $E5 2670$ processors ($2,6$ GHz, $8$-core), in a distributed memory architecture; thus using a total of 256 parallel threads. The MPI library on the FreeFem++ PDE solver \cite{Hecht_2012} was selected as a software framework.

Using the above-described parallel computing environment for the computation of \eqref{Quadrature} for each $\a\in\mathcal{N}_h$, the converged solution is obtained by about $3$ iterations with tolerance $tol=10^{-3}$ in the $L^2(\Omega)$-norm. To be more precise, the average number of iterations is $2.81$, with minimum and maximum equal to $2$ and $11$, respectively. So, each time step takes an average time of $88.85$ seconds, of which $35.95$ seconds (on average) are due to the parallel computation of \eqref{Quadrature}. The remaining time is occupied in solving the associated linear system, which spends  $3.06$ seconds (on average) for each iteration, and data I/O. 

Figure~\ref{fig:evolution} shows how the initial state changes into four peaks that are aggregated into a single component until reaching a final steady state. This result is in good agreement with that in \cite[Sect. 3.4, Ex. 8]{Carrillo_Chertock_Huang_2015}. The dynamics regarding the $\|\cdot\|_{L^\infty(\Omega)}$- and $\|\cdot\|_{L^1(\Omega)}$-norms is reported in Figure \ref{fig:Linf-L1-norm}. The $\|\cdot\|_{L^\infty(\Omega)}$- norm approaches the value $16$ as of $t=14$, while the $\|\cdot\|_{L^1(\Omega)}$-norm takes values around $8.92$, which is comparable to $\|\rho^0\|_{L^1(\Omega)}=9$.
\begin{figure}
\centering
\subfigure[Time 2.5]{
    \includegraphics[scale=0.1]{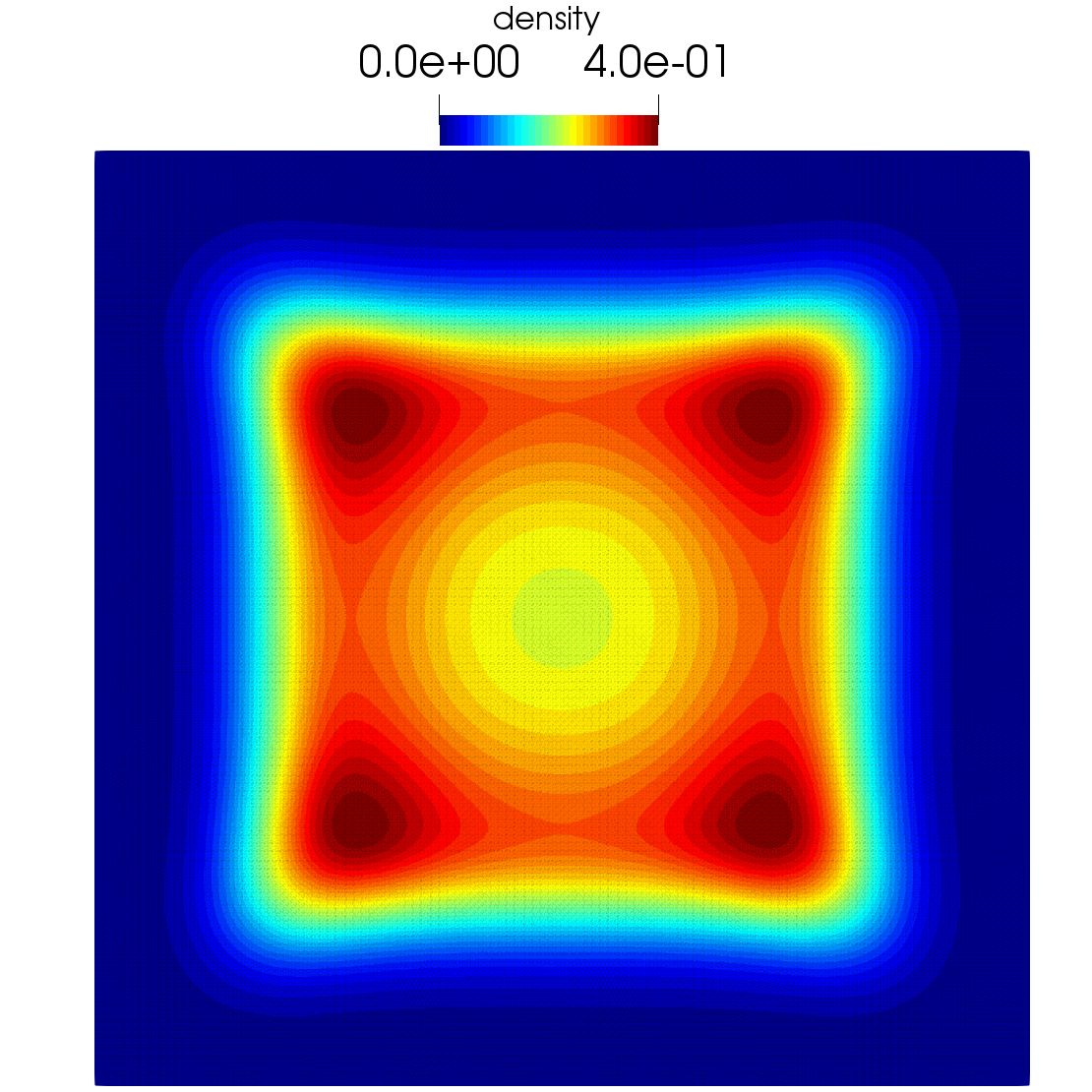}}
    \hspace{0.5cm}
\subfigure[Time 5.0]{
      \includegraphics[scale=0.1]{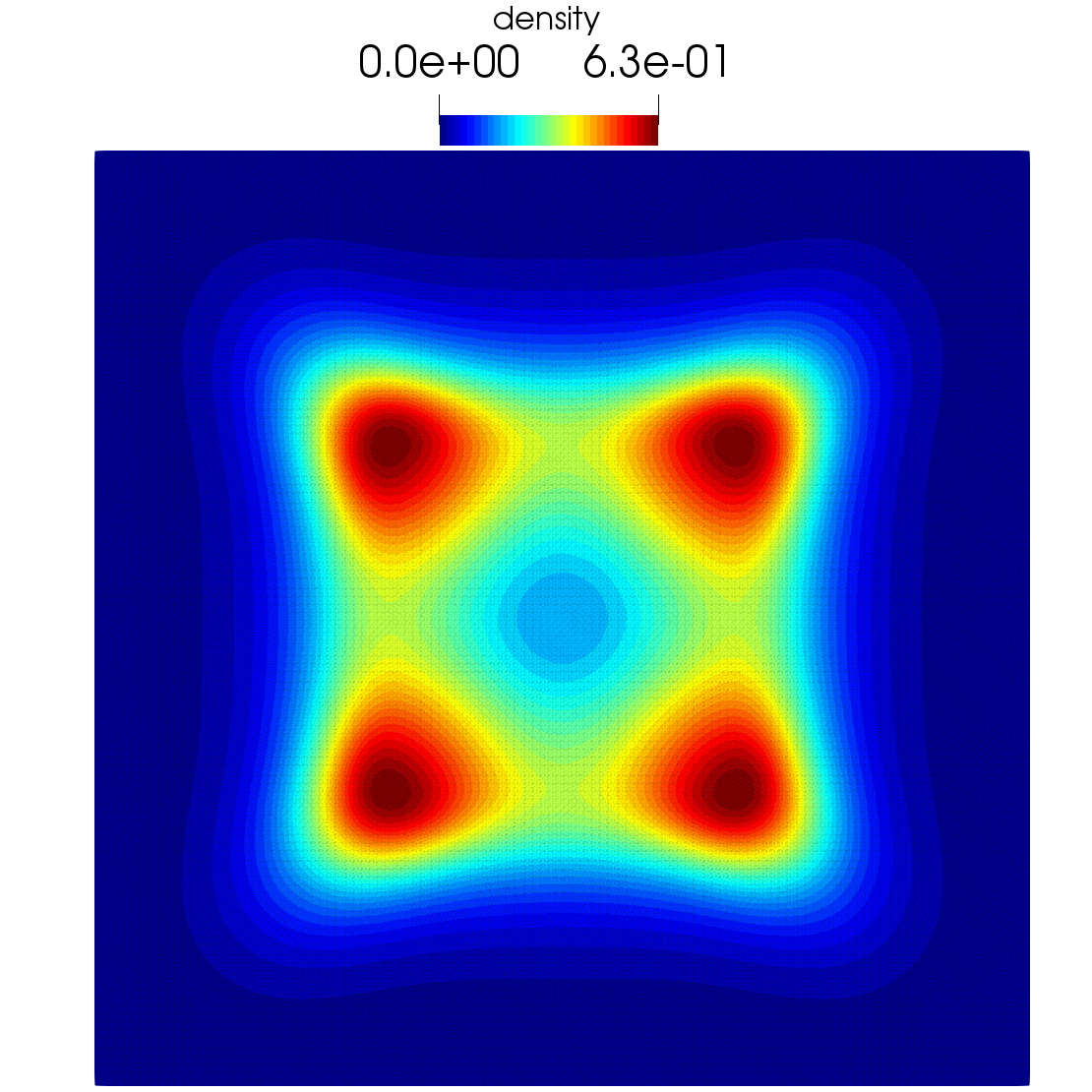}}
    \hspace{0.5cm}      
\subfigure[Time 7.5]{
    \includegraphics[scale=0.1]{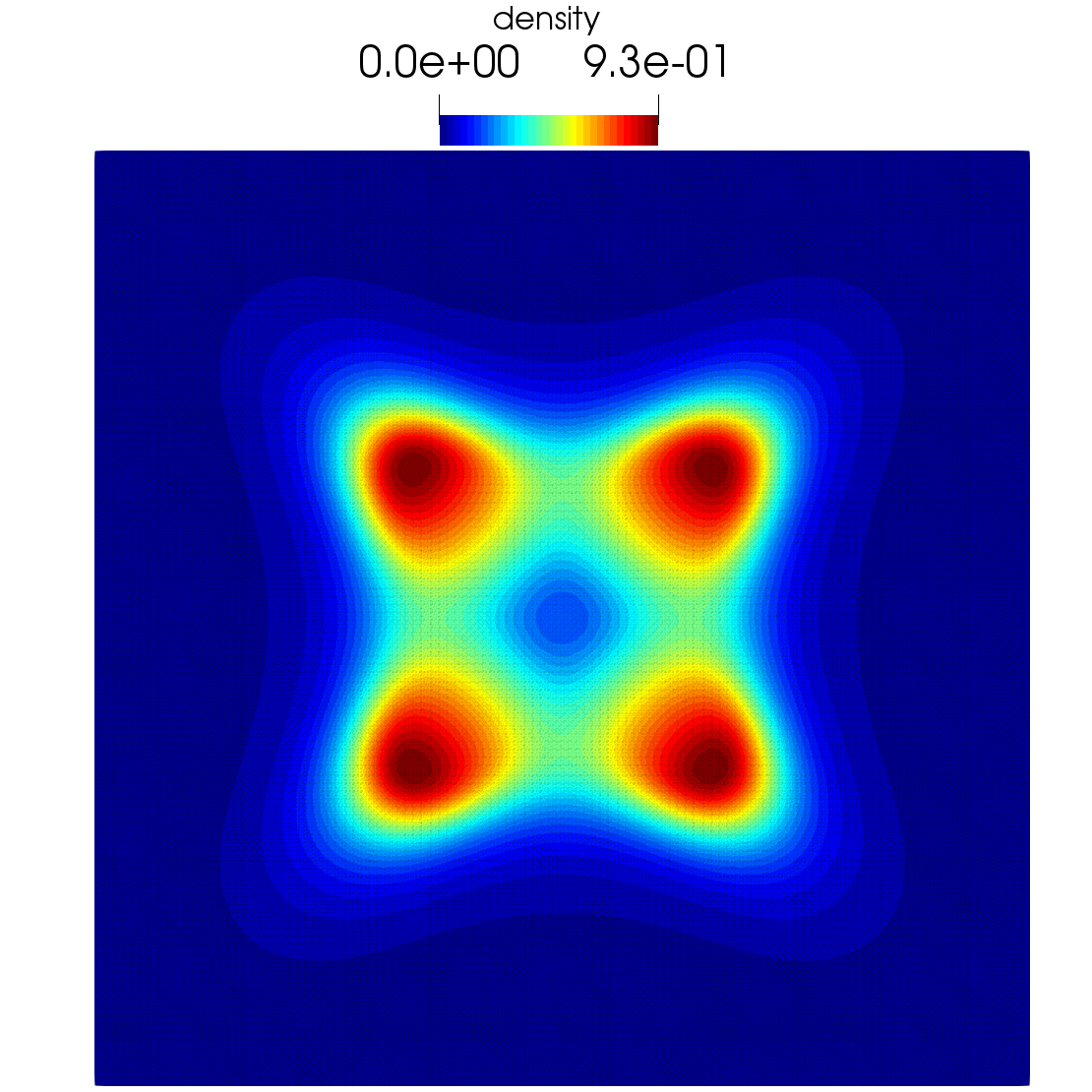}}
    \hspace{0.5cm}
\subfigure[Time 10.0]{
      \includegraphics[scale=0.1]{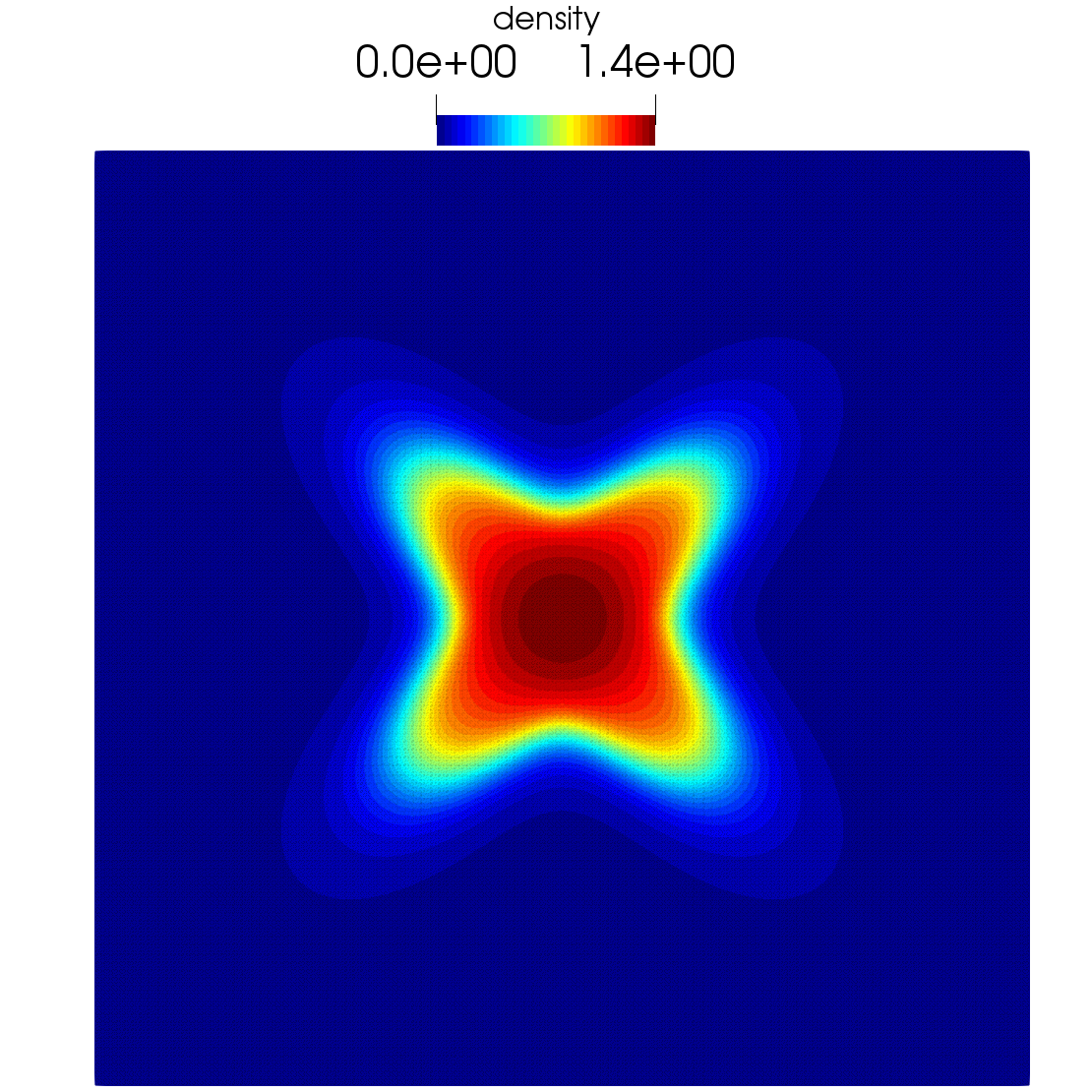}}
    \hspace{0.5cm}
\subfigure[Time 12.5]{
    \includegraphics[scale=0.1]{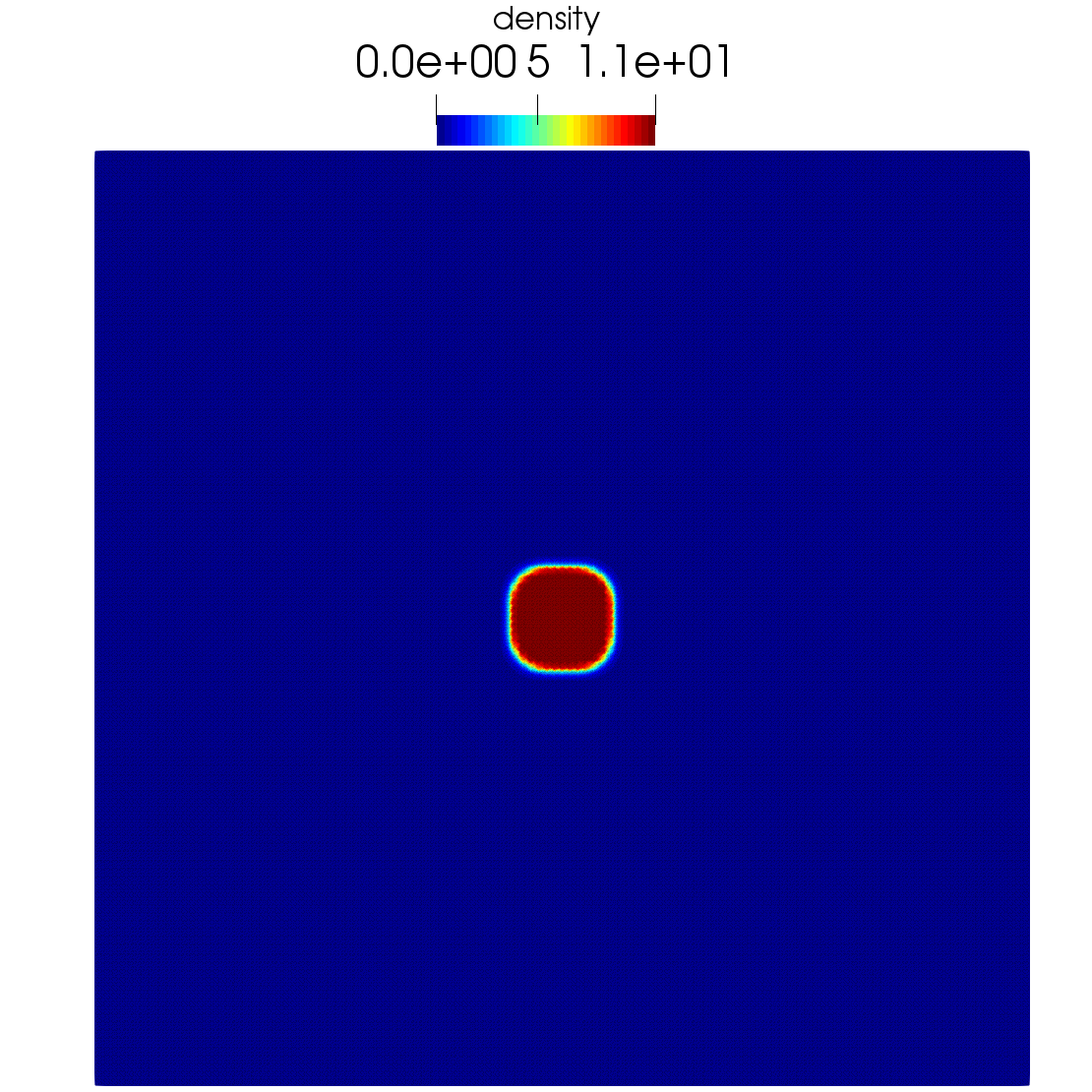}}
    \hspace{0.5cm}
\subfigure[Time 15.0]{
      \includegraphics[scale=0.1]{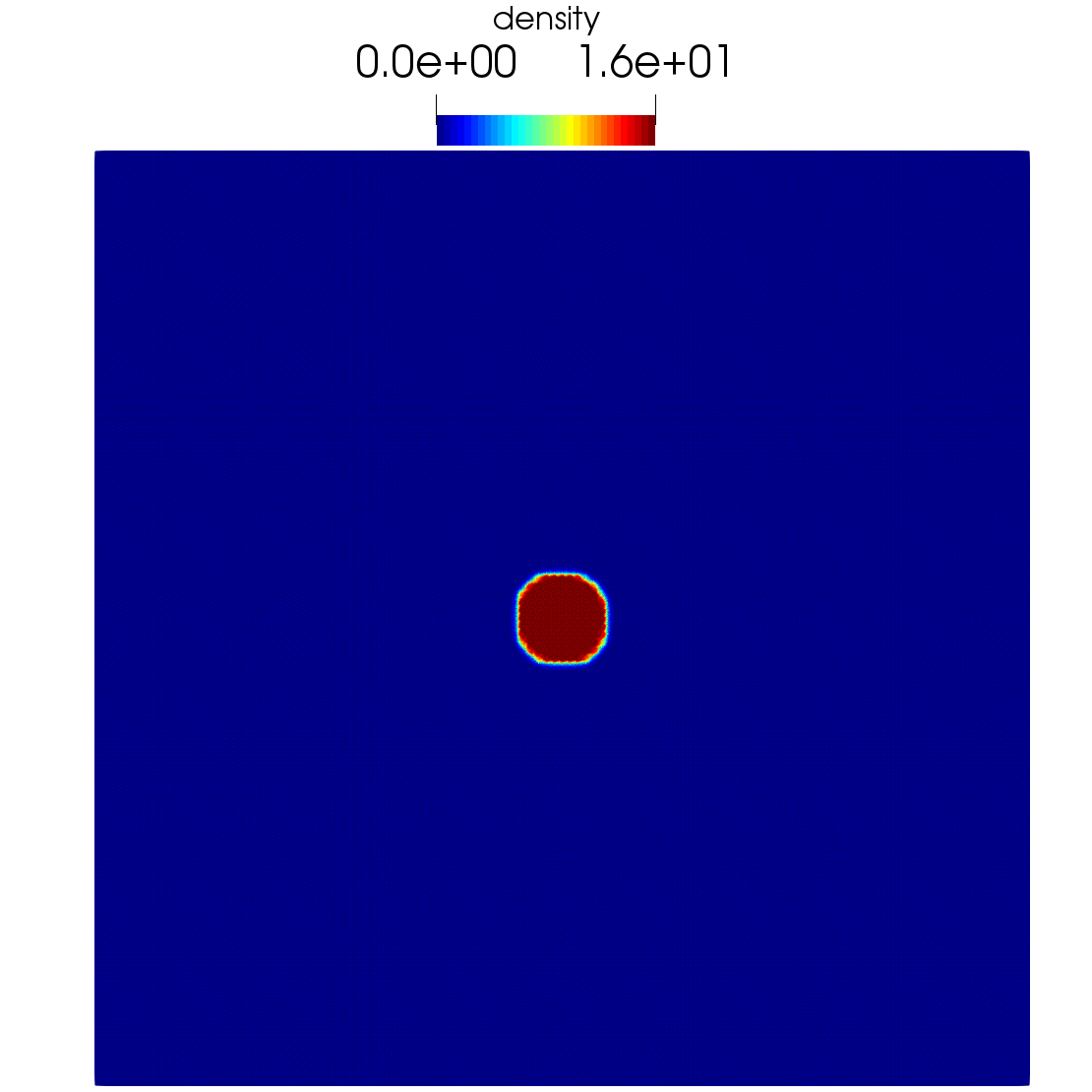}}
  \caption{Evolution of the computed solution at times $t=2.5, 5, 7.5,10, 12.5, 15$}
  \label{fig:evolution}
\end{figure}

\begin{figure}
  \centering
  \begin{tabular}{ccc}
    \includegraphics[width=0.5\linewidth, height=0.3\linewidth]{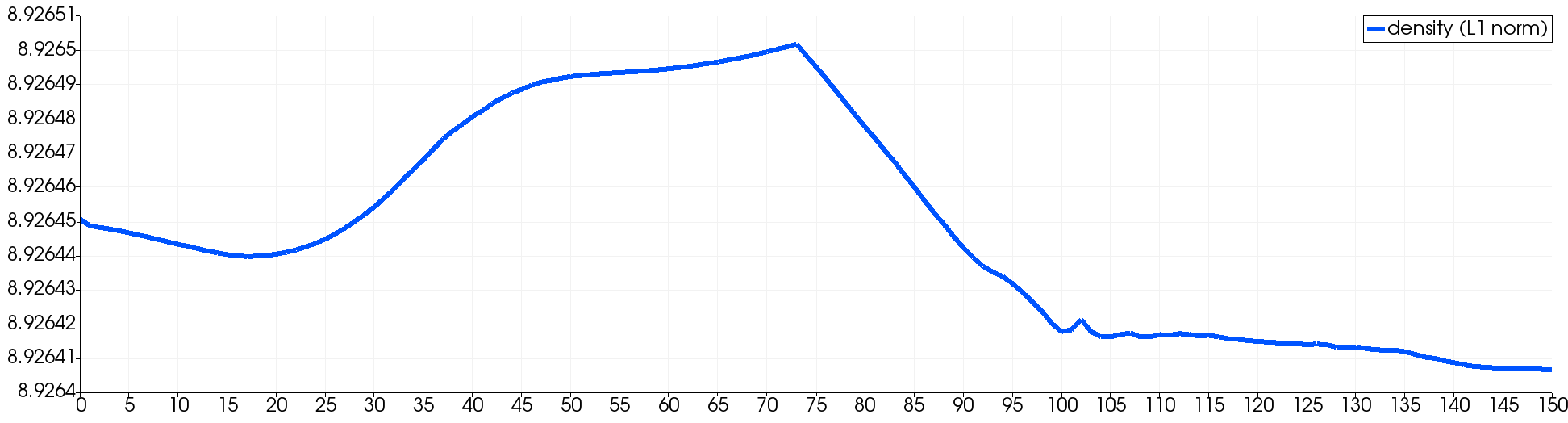}
    &
      \includegraphics[width=0.5\linewidth, height=0.3\linewidth]{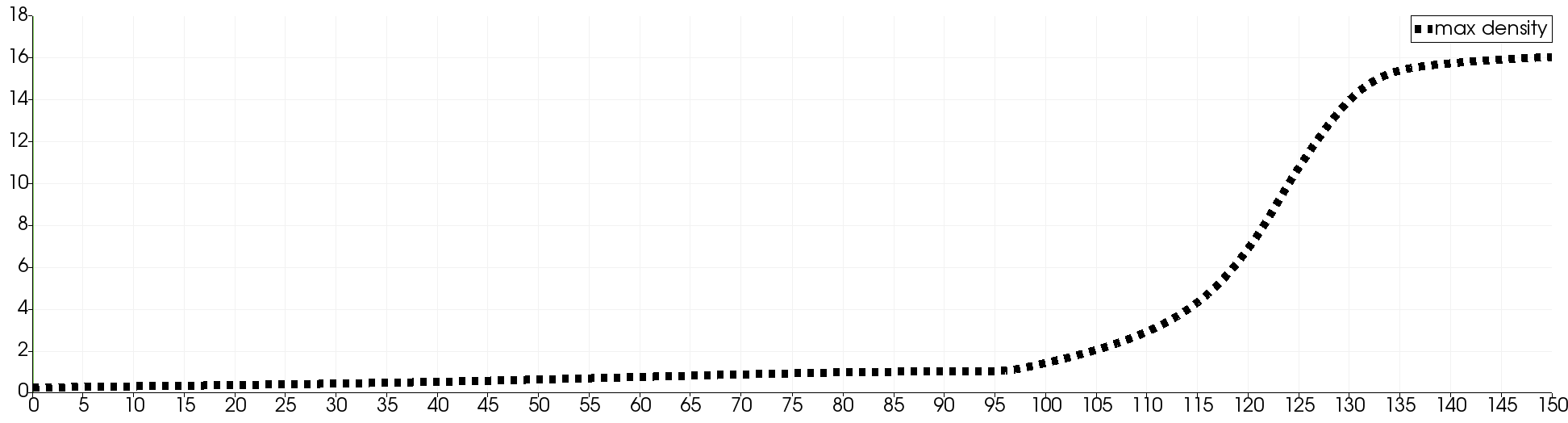}
\end{tabular}
  \caption{Plot of the $\|\cdot\|_{L^1(\Omega)}$-norm (left) and the $\|\cdot\|_{L^\infty(\Omega)}$-norm (right) for the computed density.}
  \label{fig:Linf-L1-norm}
\end{figure}

\end{document}